\documentclass[12pt]{article}

\usepackage{style}
\usepackage{macros}
\usepackage{amsmath}
\usepackage{subfiles}

\usepackage[a4paper,margin=1.2in]{geometry}
\usepackage{fancyhdr}
\title{Higher-dimensional Virasoro algebras}
\author{Zhengping Gui and Brian R. Williams}
\date{\today}

\newcommand{\SP}{\mathbb{P}^{S^1}}
\newcommand{\1}{{\textbf{\texttt{\tiny{1}}}}}
\newcommand{\2}{\textbf{\texttt{\tiny{2}}}}
\newcommand{\jj}{\textbf{\texttt{\tiny{j}}}}

\newcommand{\ii}{\textbf{\texttt{\tiny{i}}}}
\newcommand{\kkk}{\textbf{\texttt{\tiny{k}}}}

\newcommand{\s}{\textbf{\texttt{\tiny{s}}}}
\newcommand{\ddd}{\textbf{\texttt{\tiny{d}}}}
\newcommand{\smallhollowcolon}{%
  \vcenter{\offinterlineskip
    \ialign{##\cr
      \tiny$\circ$\cr
      \noalign{\kern0.4ex}
      \tiny$\circ$\cr
    }%
  }%
}

\newcommand{\dd}{\textbf{\texttt{\tiny{d}}}}

\newcommand{\permute}{\tau}

\usepackage{simpler-wick}
\usepackage{simpler-wick}
\usetikzlibrary{decorations.markings}
\tikzset{W->-/.style={decoration={
  markings,
  mark=at position 0.5*\pgfdecoratedpathlength+2pt with
  {\draw[-latex] (-2pt,0pt) -- (1pt,0pt);}},postaction={decorate}},
  W-<-/.style={decoration={
  markings,
  mark=at position 0.5*\pgfdecoratedpathlength with
  {\draw[latex-] (-2pt,0pt) -- (1pt,0pt);}},postaction={decorate}}
  }
\pgfkeys{
  /simplerwick/.cd,
  arrows/.store in=\LstWickArrows,
  arrows/.initial={-,-,-,-,-,-,-,-,-}, % the # of contractions is bounded by 9
  arrows={-,-,-,-,-,-,-,-,-},
  positions/.store in=\LstWickPositions,
  positions={+1,+1,+1,+1,+1,+1,+1,+1,+1},
  positions/.initial={+1,+1,+1,+1,+1,+1,+1,+1,+1},
}

\makeatletter
\newcounter{Wick@up}
\newcounter{Wick@down}
\def\swick@end#1#2{
  \swick@setfalse@#1
  \tikzexternaldisable
  \begin{tikzpicture}[remember picture, baseline=(swick-close#1.base)]
    \node[use as bounding box, inner sep=0pt, outer sep=0pt] (swick-close#1) {$\displaystyle #2$};
  \end{tikzpicture}
  \tikz[remember picture, overlay]
{
\xdef\myW@style{\empty}
\foreach \W@X[count=\W@C] in \LstWickArrows
{\ifnum\W@C=#1
\xdef\myW@style{\W@X}
\fi}
\ifx\myW@style\empty
\PackageWarning{simpler-wick}{%
The list arrows has not enough entries!%
}{}
\xdef\myW@style{-}
\fi
\xdef\myW@pos{-77}
\foreach \W@X[count=\W@C] in \LstWickPositions
{\ifnum\W@C=#1
\xdef\myW@pos{\W@X}
\fi}
\ifnum\myW@pos=-77
\PackageWarning{simpler-wick}{%
The list positions has not enough entries!%
}{}
\xdef\myW@pos{+1}
\fi
\ifnum\myW@pos=-1
    \draw[\myW@style] ($(swick-open#1.south) + (0, -3pt)$) 
          -- ($(swick-open#1.base) + (0, -\swick@offset) + \theWick@down*(0, -\swick@sep)$) 
          -- ($(swick-close#1.base) + (0, -\swick@offset) + \theWick@down*(0, -\swick@sep)$) 
          -- ($(swick-close#1.south) + (0, -3pt)$);
\stepcounter{Wick@down}
\else
\stepcounter{Wick@up}
    \draw[\myW@style] ($(swick-open#1.north) + (0, 3pt)$) 
          -- ($(swick-open#1.base) + (0, \swick@offset) + \theWick@up*(0, \swick@sep)$) 
          -- ($(swick-close#1.base) + (0, \swick@offset) + \theWick@up*(0, \swick@sep)$) 
          -- ($(swick-close#1.north) + (0, 3pt)$);
\fi}
  \tikzexternalenable}
\def\wick@[#1]#2{\setcounter{Wick@up}{0}
\setcounter{Wick@down}{-1}
  \ifmmode
    \begingroup
    \pgfkeys{
        simplerwick,
        #1}
    \swick@cond@reset
    \swick@count=0
    \def\swick@max{0}
    \def\c{\swick@smart}
    #2
    \dimen0=\swick@sep
    \multiply\dimen0 by \swick@max
    \advance\dimen0 by \swick@offset
    \vbox to \dimen0{}
    \swick@cond@any{
      \PackageWarning{simpler-wick}{%
        I have reached the end of \protect\wick\space with some unclosed
        contractions%
      }{}
    }{}
    \endgroup
  \else
    \PackageWarning{simpler-wick}{%
      \protect\wich\space has been called outside a math environment, this will
      be ignore%
    }
  \fi
}

\makeatother

\begin{document}
\maketitle

\abstract{
  We classify central extensions of the dg Lie algebra of derived global sections of the tangent sheaf on the
punctured, formal $d$-disk for $d > 2$. 
The case of $d=2$ has been handled in \cite{GWvir2d}.
We also prove a local, universal form of the Grothendieck--Riemann--Roch theorem for families of $d$-dimensional complex
varieties.}
% ----------------------------------------------------------------

%\tableofcontents

\section{Introduction}

Let $\mathring{D}^d$ denote the punctured $d$-dimensional formal disk.
For $d > 1$ this is not a formal affine scheme.
The main object of study in this paper is the derived global sections of the tangent sheaf 
\begin{equation}\label{}
  \lie{witt}_d \simeq \R \Gamma(\mathring{D}^d , \sT) 
\end{equation}
on the punctured $d$-disk.
We refer to this dg Lie algebra as the $d$-dimensional Witt algebra.
We write equivalence, rather than definition, since we use $\lie{witt}_d$ to refer to a concrete dg model for the derived global
sections which was introduced in \cite{FHK,GWWchiral} motivated by the theory of Jouanolou torsors.
See \cite{GWvir2d} for more details on the definition of $\lie{witt}_d$; we review its features in section
\ref{s:background}.

The first main result of this paper is the classification of central extensions of this dg Lie algebra.
We will show, see theorem \ref{thm:local}, that the second Lie algebra (hyper) cohomology of $\lie{witt}_d$ is $p(d+1)-1$
dimensional where $p(k)$ is the number of partitions of the integer $k$.
In fact, we use a universal Chern--Weil homomorphism \cite{GWvir2d}:
\begin{equation}\label{}
  \mathsf{cw}_d \colon \C[\op{ch}_1,\op{ch}_2, \ldots, \op{ch}_{d}] \to \HH^2_{Lie} (\lie{witt}_d) 
\end{equation}
which we will show is an isomorphism when restricted to polynomials in the $\op{ch}_i$ which are of total degree $d+1$.
Here, $\op{ch}_i$ is treated as a degree $i$ polynomial variable, and plays the role of the universal $i$th Chern character.
The proof proceeds in two steps: (1) first we will show in section \ref{s:chern} that $\mathsf{cw}_d$ is injective when restricted to degree
$d+1$ polynomials, (2) we next use a spectral sequence induced from the "diagonal filtration" in section \ref{s:gf} to obtain
an upper bound for the second cohomology.

Our next result is a local, universal Grothendieck--Riemann--Roch theorem.
To state it, we start with a tensor $\lie{witt}_d$-module $\cV$.
Roughly, this is a dg module obtained as the global sections of some natural vector bundle on the formal punctured
disk.
Now, such a tensor module defines a homomorphism 
\begin{equation}\label{}
  \mathsf{L}_V \colon \lie{witt}_d \to \op{End}(\cV) . 
\end{equation}
On the other hand, $\op{End}(\cV)$ is a dg Tate vector space. 
It follows \cite{FHK} that its second Lie algebra cohomology is one-dimensional generated by the preferred class 
\begin{equation}\label{}
  [z^\1 \wedge \cdots \wedge z^{\dd} \wedge P] \in \HH^2_{Lie}(\op{End}(\cV)) 
\end{equation}
where $[P] \in H^{d-1}(\mathring{D}^d, \sO)$ is the $d$-dimensional residue class.
We will prove in section \ref{s:GRR} that
\begin{equation}\label{}
  \mathsf{L}_V^* [z^\1 \wedge \cdots \wedge z^{\dd} \wedge P] = \op{Td} \op{Ch}(\cV) \in \HH^2_{Lie}(\lie{witt}_d) .
\end{equation}

\paragraph
Most of the essential definitions and constructions used in this paper have appeared in \cite{GWvir2d} where we proved versions of the main
results of this paper in the case $d=2$.
We recall the neccesary notations in \ref{s:background} and \ref{s:chern}, but we refer the reader to \textit{loc.
cit.} for more details.

\paragraph
In the main text we also study an extension of dg Lie algebra $\lie{witt}_d$ by the dg Lie algebra of $\lie{gl}
_r$-valued currents on the punctured $d$-disk.
This is $\lie{gl}_r \otimes \cJ_d$ where $\cJ_d \simeq \R \Gamma(\mathring{D}^d, \sO)$ is the Jouanolou model for the
structure sheaf.
On its own, central extensions of this current dg Lie algebra were introduced as higher-dimensional
Kac--Moody algebras in \cite{FHK}.
Our work extends the classification of central exstensions for the semi-direct procut $\lie{witt}_d \ltimes \lie{gl}
_r (\cJ_d)$.
We also formulate and prove an extension of the Grothendieck--Riemann--Roch theorem to this setting.

\subsection*{Acknowledgements}
We thank Kevin Costello, Mikhail Kapranov, and Matt Szczesny for valuable discussions, particularly during the early stages of this work.

\section{Background on Jouanolou models}\label{s:background}
\subsection{Definitions and preliminary results}

Refer to \cite[\S 1.2]{FHK}, \cite[\S 1]{GWWchiral}, and specifically \cite[\S 1]{GWvir2d} for more details.

\paragraph
Let $\cJ_d^{poly}$ denote the Jouanlou model for the structure sheaf on $\AA^d - \{0\}$.
It is a dg algebra defined over $\C[z] = \C[z^\1, \ldots, z^\dd]$.
As a graded ring it is the following quotient of a graded polynomial ring
\begin{equation}\label{}
  \frac{\C[z^\s,x^\s,\d x^\s]_{\1 \leq \s \leq \dd}}{\<\sum_{\s=\1}^d z^\s x^\s -1, \quad \sum_{\s=\1}^d z^\s \d x^\s \> }
\end{equation}
where $z^\s, x^\s$ carry degree zero, and $\d x^\s$ carries degree $+1$ for each $\s$.
Note that $\cJ_d^{poly}$ is concentrated in degrees $0,\ldots, d-1$.
The differential is defined by
\[
  \dbar z^\s = 0, \quad \dbar x^\s = \d x^\s .
\]
Let $\cJ_d$ be its completion over the $z$-variables.
The latter is a dg algebra defined over $\C\llbracket z \rrbracket = \C\llbracket z^\1, \ldots, z^\dd \rrbracket$.

\paragraph
The Jouanolou model $\cJ_d$ (and $\cJ_d^{poly}$) is a $GL_d$-equivariant dg algebra meaning the product and
differential are $GL_d$
equivariant.
Let $\lie{q}$ denote the $d$-dimensional defining representation of $GL_d$.

\begin{proposition}[\cite{FHK}]
\label{prop:Jouanolou}
The cohomology of $\cJ^{poly}_d$ is concentrated in degrees zero and $d-1$.
There are natural $GL_d$-equivariant isomorphisms
\begin{equation}\label{}
  H^0(\cJ_d^{poly}) \cong \op{S}^\bu (\lie{q}^*) = \C[z]
\end{equation}
and
\begin{equation}\label{}
  H^{d-1} (\cJ_d^{poly}) \cong \op{S}^\bu (\lie{q}) \otimes \op{det}(\lie{q}) .
\end{equation}
Similar results hold for $\cJ_d$ where the first isomorphism is replaced by
\[
  H^0(\cJ_d) \cong \Hat{\op{S}}(\lie{q}^*) = \C\llbracket z \rrbracket .
\]
\end{proposition}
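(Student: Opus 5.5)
Write $U = \AA^d - \{0\}$, covered by the open sets $U_\s = \{z^\s \neq 0\}$, and let $\pi \colon \mathcal{J} \to U$ be the Jouanolou torsor. Here $\mathcal{J} = \op{Spec} \C[z,x]/\langle \sum_\s z^\s x^\s - 1\rangle$ is an affine bundle over $U$ whose fibers are affine hyperplanes. The plan is to identify $\cJ_d^{poly}$ with the relative de Rham complex of $\mathcal{J}$ over $U$. The relation $\sum_\s z^\s\, \d x^\s = 0$ is exactly the vertical part of the differential of the defining equation, and $\dbar$ is the relative de Rham differential with $z$ treated as constant. The cohomology of $\cJ_d^{poly}$ is then computed in two steps. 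First, the relative Poincaré lemma (fibers are affine spaces $\AA^{d-1}$) shows that the relative de Rham complex $\Omega^\bu_{\mathcal{J}/U}$ is a resolution of $\pi^{-1}\sO_U$. Second, $\mathcal{J}$ is affine, so global sections of the quasi-coherent terms compute derived pushforward, and $\R\pi_*\pi^{-1}\sO_U \simeq \sO_U$. Together these give $H^\bu(\cJ_d^{poly}) \cong H^\bu(U,\sO_U)$.

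To carry this out concretely and keep $GL_d$-equivariance visible, I will work Zariski-locally on the cover $\{U_\s\}$. On $U_\s$ one can solve $x^\s = (1 - \sum_{\ii \neq \s} z^\ii x^\ii)/z^\s$, so the restricted complex is $\C[z][(z^\s)^{-1}] \otimes \Omega^\bu(\AA^{d-1}_{x^\ii,\ \ii\neq \s})$. This complex has cohomology $\C[z][(z^\s)^{-1}]$ in degree $0$ by the polynomial Poincaré lemma. A Čech-to-relative-de-Rham double complex then yields the isomorphism with $\check H^\bu(\{U_\s\},\sO)$. Since every construction is natural in the $GL_d$-action on $(z,x)$, with $z\in \lie{q}^*$ and $x \in \lie{q}$, the isomorphism is $GL_d$-equivariant.

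The remaining input is the classical computation of $H^\bu(\AA^d - \{0\},\sO)$ via the Čech complex of the cover by the $U_\s$, using the $\mathbb{Z}^d$-grading by monomials. In degree $0$ one gets $\C[z] = \op{S}^\bu(\lie{q}^*)$ by Hartogs (for $d\ge2$). In degree $d-1$ one gets the span of the monomials $(z^\1)^{-a_1-1}\cdots(z^\dd)^{-a_d-1}$ with $a_i\ge 0$, and all intermediate degrees vanish. Equivariantly, $H^{d-1}(U,\sO) \cong H^d_{\{0\}}(\AA^d,\sO)$, and the local cohomology module is $\op{S}^\bu(\lie{q})\otimes \det(\lie{q})$: the factor $(z^\1\cdots z^\dd)^{-1}$ transforms as $\det(\lie{q})$, as one sees from the residue pairing with top forms $\C[z]\, dz^\1\wedge\cdots\wedge dz^\dd$. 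For the completed statement, I will repeat the argument with $\C[z]$ replaced by $\C\llbracket z\rrbracket$. The Čech computation goes through for the punctured formal disk $\op{Spec}\C\llbracket z\rrbracket - \{0\}$ with the same answer in degree $d-1$ (local cohomology only depends on the completion), and degree $0$ becomes $\C\llbracket z\rrbracket$.

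I expect the main obstacle to be the rigorous comparison between $\cJ_d$ and the completed Čech complex. Completion in $z$ does not obviously commute with the localizations $(z^\s)^{-1}$, so I will instead argue degree-by-degree in the $\mathbb{Z}$-grading by total $z$-weight minus $x$-weight. This weight grading is preserved by $\dbar$. In each weight the complex is finite-dimensional, so the polynomial result transfers weight-wise, and $\cJ_d$ is the appropriate product over weights of the polynomial complex. This transfer gives the completed version directly.
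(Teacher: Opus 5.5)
The paper quotes this proposition from \cite{FHK} without proof, so there is nothing internal to compare against. Your treatment of $\cJ^{poly}_d$ is the standard Jouanolou-torsor argument, and it works as you implement it in your second paragraph. In the double complex $\check{C}^p(\{U_s\},\cJ^{poly,q}_d)$ the augmented rows are exact because $z^1,\ldots,z^d$ generate the unit ideal of $\cJ^{poly,0}_d$. The columns resolve $\sO(U_{s_0\cdots s_p})$ by the Poincar\'e lemma on $\pi^{-1}(U_{s_0\cdots s_p})\cong U_{s_0\cdots s_p}\times\AA^{d-1}$.

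Two framing points need repair, though neither affects that computation. First, the sheaf-level claim in your first paragraph is false as stated. $\Omega^\bu_{\mathcal{J}/U}$ is not a resolution of $\pi^{-1}\sO_U$ in the Zariski topology on $\mathcal{J}$, because the algebraic Poincar\'e lemma fails Zariski-locally (already $\d y/y$ on $\{y\neq 0\}\subset\AA^1$ is closed but not locally exact). What is true, and what you actually use, is that $\pi_*\Omega^\bu_{\mathcal{J}/U}$ resolves $\sO_U$ on $U$ by $\Gamma(U,-)$-acyclic sheaves, since $\mathcal{J}$ is affine. Second, the cover $\{U_s\}$ is not $GL_d$-stable, so equivariance does not follow from naturality of your \v{C}ech complex. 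It follows because the comparison $H^\bu(\cJ^{poly}_d)\cong H^\bu(U,\sO_U)$ is induced by that equivariant resolution and does not depend on the cover, combined with your residue-pairing identification of $H^{d-1}$.

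The genuine gap is in the completed case, where the route you settle on fails on two counts.

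\begin{itemize}
\item The weight spaces of $\cJ^{poly}_d$ are not finite-dimensional for $d\geq 2$. The function $z^1x^1$ is nonconstant on the irreducible variety $\{\sum_s z^sx^s=1\}$, so $1, z^1x^1, (z^1x^1)^2,\ldots$ are linearly independent elements of weight zero.
\item More seriously, $\cJ_d$ is not a product of weight spaces of $\cJ^{poly}_d$. The description $\cJ^0_d=\C\llbracket z\rrbracket[x]/\langle\sum_s z^sx^s-1\rangle$ in the proof of Proposition~\ref{prop:flatness} shows that $\cJ_d=\cJ^{poly}_d\otimes_{\C[z]}\C\llbracket z\rrbracket$: power series in $z$, but polynomial in $x$ and $\d x$. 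A weight-wise completion is a different object. The literal product over all weights would even give the wrong answer in degree $d-1$, namely $\prod_{m\geq 0}\op{S}^m(\lie{q})\otimes\op{det}(\lie{q})$ instead of $\op{S}^\bu(\lie{q})\otimes\op{det}(\lie{q})$.
\end{itemize}

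The obstacle that sent you down this route is illusory. Because $\cJ_d$ is a base change, localization commutes with it: $\cJ_d[1/z^s]\cong \C\llbracket z\rrbracket[1/z^s][x^i,\d x^i]_{i\neq s}$, with the de Rham differential in the $x^i$. So your first plan goes through verbatim: rerun the \v{C}ech--relative de Rham argument over $\C\llbracket z\rrbracket$, then use $H^d_{\mathfrak{m}}(\C\llbracket z\rrbracket)\cong H^d_{\mathfrak{m}}(\C[z])$ with $\mathfrak{m}=(z^1,\ldots,z^d)$.

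A quicker argument also works. $\C\llbracket z\rrbracket$ is flat over $\C[z]$ and $\dbar$ is $\C[z]$-linear, so $H^\bu(\cJ_d)\cong H^\bu(\cJ^{poly}_d)\otimes_{\C[z]}\C\llbracket z\rrbracket$. This is $\C\llbracket z\rrbracket$ in degree $0$. It is unchanged in degree $d-1$, because every element of $H^{d-1}(\cJ^{poly}_d)$ is killed by some power of $\mathfrak{m}$.
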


Let
\begin{equation}\label{}
  P \define \sum_{\s=1}^d (-1)^{\s-1} x^{\s} \d x^\1 \cdots \Hat{\d x^\s} \cdots \d x^\dd
\end{equation}
where the hat means that we exclude that wedge factor.
This is an element of $\cJ_d$ of degree $(d-1)$ and transforms in the determinant representation of $GL_d$.
For $\bk = (k_\1, \ldots, k_\dd)$ a $d$-tuple of non-negative integers,
\[
\del^{(\bk)} P \define \frac{(\sum_\ii k_\ii +1)!}{\prod_\ii k_\ii} (x^\1)^{k_\1} \cdots (x^\dd)^{k_\dd}  P .
\]
The collection $\{[\del^{(\bk)} P]_{\dbar}\}$ where $\bk$ ranges over all $d$-tuples of non-negative integers is a basis for
$H^{d-1}(\cJ_d)$.

\paragraph
The Jouanolou model for the tangent sheaf $\sT$ on $\AA^{d} - \{0\}$ is the polynomial dg Witt algebra denoted
$\lie{witt}_d^{poly}$.
It is naturally a dg Lie algebra whose differential we will also denote $\dbar$.
It is freely generated over $\cJ_d^{poly}$ by generators we denote $\del_\1, \ldots, \del_\dd$.
Equivalently, as a
cochain complex
\begin{equation}\label{}
  \lie{witt}^{poly}_d = \cJ_d^{poly} \otimes \lie{q}
\end{equation}
where $\lie{q} = \op{span}\{\del_\1, \ldots, \del_{\dd}\}$.

There is a natural $\cJ_d$-linear quasi-isomorphism of dg Lie algebras
\begin{equation}\label{}
  \lie{witt}_d^{poly} \xto{\simeq} \op{Der} \cJ^{poly}_d
\end{equation}
which is defined by
\[
  \del_{\ii} z^\jj = \delta_{\ii}^{\jj}, \quad \del_{\ii} x^{\jj} = -x^\ii x^\jj, \quad \del_{\ii} \d x^{\jj} = -
  x^{\jj} \d x^{\ii} - x^{\ii} \d x^{\jj} .
\]
As is usual, if $X \in\lie{witt}_d^{poly}$ then we denote $X f$ its action on $f \in \cJ_d$.
Similar remarks and formulas are true for the formal completion over the $z$-variables denoted $\lie{witt}_d$.

\begin{proposition}[\cite{GWvir2d}]
\label{prop:JouanolouWitt}
The cohomology of $\lie{witt}^{poly}_d$ is concentrated in degrees zero and $d-1$.
There is a natural $GL_d$-equivariant isomorphism of Lie algebras
\begin{equation}\label{}
  H^0(\lie{witt}_d^{poly}) \cong \op{S}^\bu (\lie{q}^*) \otimes \lie{q} = \lie{w}_d^{poly}
\end{equation}
and
\begin{equation}\label{}
  H^{d-1} (\lie{witt}_d^{poly}) \cong \op{S}^\bu (\lie{q}) \otimes \op{det}(\lie{q}) \otimes \lie{q}.
\end{equation}
Similar results hold for $\cJ_d$ where the first isomorphism is replaced by
\[
  H^0(\lie{witt}_d) \cong \lie{w}_d .
\]
\end{proposition}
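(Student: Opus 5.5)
The plan is to reduce to Proposition \ref{prop:Jouanolou}. As a cochain complex, $\lie{witt}^{poly}_d = \cJ^{poly}_d \otimes \lie{q}$, where $\lie{q}$ is a finite-dimensional vector space in degree zero and the differential $\dbar$ acts only on the first factor. This is immediate from the definition: $\lie{witt}_d^{poly}$ is freely generated over $\cJ_d^{poly}$ by $\del_\1,\ldots,\del_\dd$, and $\dbar \del_\ii = 0$, so $\dbar(f\del_\ii) = (\dbar f)\del_\ii$. Hence
\[
H^\bu(\lie{witt}^{poly}_d) \cong H^\bu(\cJ^{poly}_d)\otimes \lie{q}.
\]
Proposition \ref{prop:Jouanolou} then gives concentration in degrees $0$ and $d-1$, together with the identifications $H^0 \cong \op{S}^\bu(\lie{q}^*)\otimes\lie{q}$ and $H^{d-1}\cong \op{S}^\bu(\lie{q})\otimes\op{det}(\lie{q})\otimes\lie{q}$. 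The $GL_d$-equivariance is automatic because the tensor decomposition is compatible with the diagonal $GL_d$-action, and the isomorphisms in Proposition \ref{prop:Jouanolou} are equivariant. The completed version follows the same way: $\lie{witt}_d = \cJ_d\otimes\lie{q}$, and $H^0(\cJ_d) = \C\llbracket z\rrbracket$.

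What remains is to check that the degree-zero isomorphism respects brackets, i.e.\ that it is an isomorphism of Lie algebras onto polynomial (resp.\ formal) vector fields $\lie{w}_d^{poly}$ (resp.\ $\lie{w}_d$). The map $\C[z]\otimes\lie{q}\to H^0$, $f(z)\del_\ii\mapsto [f(z)\del_\ii]$, is a bijection by the above. Note that $f(z)$ is a genuine cocycle, since $\dbar z^\s = 0$. For the bracket, I would use the description of $\lie{witt}_d^{poly}$ via the quasi-isomorphism to $\op{Der}\,\cJ^{poly}_d$. The bracket of $f\del_\ii$ and $g\del_\jj$ is $f(\del_\ii g)\del_\jj - g(\del_\jj f)\del_\ii$. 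For $f,g\in\C[z]$, the action of $\del_\ii$ on $z$-variables is $\del_\ii z^\jj=\delta_\ii^\jj$, so $\del_\ii g$ is the ordinary partial derivative. This is exactly the bracket of vector fields on $\AA^d$. Brackets on cohomology classes may be computed on these representatives, since $\lie{witt}^{poly}_d$ is a dg Lie algebra and cohomology inherits the bracket.

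The main subtle point is the bracket convention in $\lie{witt}_d^{poly}$ as a $\cJ_d^{poly}$-module: one must confirm that it is the standard Lie–Rinehart bracket, with $\del_\ii$ acting on $x^\jj$ and $\d x^\jj$ by the stated formulas. In particular one should check that these formulas are compatible with the relations $\sum z^\s x^\s = 1$ and $\sum z^\s\d x^\s = 0$, so that $\del_\ii$ is well defined. For example, $\del_\ii(\sum_\s z^\s x^\s) = x^\ii - x^\ii\sum_\s z^\s x^\s = 0$ modulo the relation, and similarly $\del_\ii$ commutes with $\dbar$. With that in hand, restricting to $z$-polynomial representatives gives the Lie algebra isomorphism. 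The degree-$(d-1)$ statement is only claimed as an isomorphism of $GL_d$-representations, so no bracket computation is needed there.
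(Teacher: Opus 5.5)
Your proposal is correct and is essentially the argument the paper relies on: the paper gives no separate proof, citing \cite{GWvir2d}. It does record that $\lie{witt}^{poly}_d=\cJ^{poly}_d\otimes\lie{q}$ as cochain complexes, so the statement follows from Proposition~\ref{prop:Jouanolou}, with the degree-zero bracket identified with that of vector fields by computing on $z$-polynomial representatives, exactly as you do.
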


\paragraph{Tensor modules}\label{TensorMod}
Let $V$ be a finite-dimensional $\lie{gl}_d$-module.
Define the cochain complex
\begin{equation}\label{}
  \cV \define \cJ_d \otimes V .
\end{equation}
We extend the representation $\rho_V \colon \lie{gl}_d \to \op{End}(V)$ in a $\cJ_d$-linear way which we denote by the
same symbol
\begin{equation}\label{}
  \rho_V \colon \cJ_d \otimes \lie{gl}_d \to \op{End}(\cV) .
\end{equation}
For $X \in \lie{witt}_d$, define $\mathsf{L}_{V}(X) \in \op{End}(\cV)$ by the formula
\begin{equation}\label{}
  \mathsf{L}_V(X) (f \otimes v) \define (Xf) \otimes v + \rho_V(JX) (f \otimes v)
\end{equation}
where $JX \in \cJ_d \otimes \lie{gl}_d$ is the Jacobian associated to $X$.
If $X = g^\ii \del_{\ii}$ it is defined by
\begin{equation}\label{eq:jacobian}
  (J X )_{\jj}^\ii = \frac{\del g^\ii}{\del z^\jj} \in \cJ_d .
\end{equation}
It is immediate to verify that $X \mapsto \sfL_X$ endows $\cV$ with the structure of a dg $\lie{witt}_d$-module.

We note that $H^0 (\lie{\cV})$ is naturally a \textit{tensor} $H^0(\lie{witt}_d) = \lie{w}_d$-module which is obtained
by restricting $V$ along
\begin{equation}\label{}
  \{X \in \lie{w}_d\; | \; X(0) = 0\} = \lie{w}_d^{\geq 0} \twoheadrightarrow \lie{gl}_d
\end{equation}
and coinducing along $\lie{w}_d^{\geq 0} \hookrightarrow \lie{w}_d$.
For this reason, we refer to such $\cV$ as a tensor module for $\lie{witt}_d$.
These modules appear in our formulation of the universal Grothendieck--Riemann--Roch theorem in section \ref{s:GRR}.

\paragraph{Matrix extension}
For a positive integer $r$ we will consider the dg algebra of $\cJ_d$-valued $r \times r$ matrices $\lie{gl}
_r(\cJ_d)$.
Its differential is induced from $\dbar$ and the product is defined using the commutative multiplication on $\cJ_d$
together with ordinary matrix multiplication.
Via commutator, we may treat it as a dg Lie algebra.
Invariant polynomials on $\lie{gl}_r$ of degree $d+1$ define central extensions of $\lie{gl}_r(\cJ_d)$ called
higher-dimensional Kac--Moody algebras \cite{FHK}.
We are ultimately interested in central extensions of this dg Lie algebra extended by the dg Witt algebra.

Entry-wise, $\lie{witt}_d$ acts on $\lie{gl}_r (\cJ_d)$ by derivations.
We therefore can form the semi-direct product dg Lie algebra
\begin{equation}\label{}
  \lie{witt}_d \ltimes \lie{gl}_r(\cJ_d) .
\end{equation}
In sections \ref{s:chern} and \ref{s:gf} we provide a classification of central extensions of this dg Lie algebra.
This dg Lie algebra is also the main ingredient in our formulation of the universal Grothendieck--Riemann--Roch theorem
in section \ref{s:GRR}.

\subsection{Flatness of the Jouanolou model}
We will use the following algebraic result in the main part of the paper.

\begin{proposition}\label{prop:flatness}
The Jouanolou model $\cJ_d$ is flat over $\C\llbracket z\rrbracket=\C\llbracket z^{\1},\dots,z^{\dd}\rrbracket$. An analogous statement is valid in the polynomial setting.
\end{proposition}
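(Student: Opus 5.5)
The plan is to argue degree by degree: a graded module over $\C\llbracket z\rrbracket$ is flat exactly when each homogeneous component is, and the differential $\dbar$ plays no role. I will first prove the polynomial statement, that each $(\cJ_d^{poly})^k$ is flat over $\C[z]$, and then deduce the completed statement.

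\emph{Degree zero.} The component is $B \define (\cJ^{poly}_d)^0 = \C[z,x]/(\sum_\s z^\s x^\s - 1)$, the coordinate ring of the Jouanolou affine bundle $Y \to \AA^d - \{0\}$. The map $Y \to \AA^d-\{0\}$ is an affine-space bundle, in particular smooth, so it is flat. The inclusion $\AA^d - \{0\} \subset \AA^d$ is an open immersion, so it is also flat. Hence $B$ is flat over $\C[z]$. One can also see this directly: after inverting $z^\s$ the ring $B$ becomes a polynomial ring over $\C[z][1/z^\s]$, since one can solve for $x^\s$. These localizations cover $\op{Spec} B$, and each is flat over $\C[z]$.

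\emph{Degree $k$.} Let $M$ be the quotient of the free module $B^{d} = \bigoplus_\s B\,\d x^\s$ by the single element $\sum_\s z^\s \d x^\s$. Then $(\cJ^{poly}_d)^k = \Lambda^k_B M$, because the ideal generated by $\sum z^\s\d x^\s$ in the exterior algebra is exactly the one defining $\Lambda^\bu_B M$. Since $\sum_\s z^\s x^\s = 1$, the row $(z^\1,\dots,z^\dd)$ is unimodular: the map $B \to B^d$, $1 \mapsto \sum z^\s \d x^\s$, is split by $\d x^\s \mapsto x^\s$. So $M$ is a direct summand of $B^d$, hence projective of rank $d-1$. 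Its exterior powers $\Lambda^k_B M$ are then projective over $B$, in particular flat over $B$. Since $B$ is flat over $\C[z]$ and flatness is transitive, each $\Lambda^k_B M$ is flat over $\C[z]$. This proves the polynomial statement.

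\emph{Completion.} This is the step I expect to be the main obstacle, because it depends on exactly how the completion defining $\cJ_d$ is taken.
\begin{itemize}
\item If $\cJ_d = \C\llbracket z\rrbracket \otimes_{\C[z]} \cJ_d^{poly}$ degreewise, flatness follows immediately by base change.
\item If $\cJ_d$ is the $(z)$-adic completion of each component, I will use the standard fact that over a Noetherian ring $A$ with ideal $I$, the $I$-adic completion of a flat $A$-module is flat over $\hat A$ (Stacks Project, Tag 0AGW). Apply it with $A = \C[z]$, $I = (z^\1,\dots,z^\dd)$, and the modules $\Lambda^k_B M$, which are flat by the previous step.
\end{itemize}
If the completion is instead taken with respect to a polynomial weight grading, I would redo the degree-zero and degree-$k$ arguments directly over $\C\llbracket z\rrbracket$. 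The reason this works is that the relation $\sum z^\s x^\s = 1$ and the splitting $\d x^\s\mapsto x^\s$ still make sense after completion. So the completed $\Lambda^k M$ is a direct summand of the completed free module $\hat B\otimes\Lambda^k(\C^d)$. It then remains only to check that $\hat B$ itself is flat over $\C\llbracket z\rrbracket$, and for that I would again invoke Tag 0AGW.
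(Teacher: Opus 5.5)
Your proof is correct and follows the paper's argument. In degree zero you invert each $z^{\s}$ to get a polynomial ring; the paper does the same thing at primes of $\cJ_d^0$, working directly over $\C\llbracket z\rrbracket$ rather than over $\C[z]$ followed by base change. In higher degrees you use the splitting $(f_{\s})\mapsto\sum_{\s} f_{\s}x^{\s}$, which makes $\cJ_d^1$ a projective $\cJ_d^0$-module, so that $\cJ_d=\wedge^\bullet_{\cJ_d^0}\cJ_d^1$ is flat.

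The paper's completion is the base change $\cJ_d^0=\C\llbracket z\rrbracket[x]/\langle\sum_{\s} z^{\s}x^{\s}-1\rangle$, so your first completion bullet is the one that applies. The $(z)$-adic reading in your second bullet would give the zero module, since $\sum_{\s} z^{\s}x^{\s}=1$ forces $(z)B=B$.
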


\begin{proof}
  The proof of \cite{FGY} can be modified to apply to the Jouanolou model in this setting. For the reader’s convenience, we present a more direct argument.

  As above, we denote $\C\llbracket z \rrbracket = \C\llbracket z^\1, \ldots, z^\dd \rrbracket$ and $\C[x] = \C[x^\1,
\ldots,x^\dd]$.
Consider the degree-zero component
\[
\cJ_d^0=\frac{\C\llbracket z\rrbracket[x]}{\<\sum_{\s=1}^d z^{\s}x^{\s}
-1\>}
\]
which is naturally an algebra over $\C\llbracket z \rrbracket$. We will show that $\cJ_d^0$ is flat as a $\C\llbracket z\rrbracket$-algebra, and that $\cJ_d$ is flat over $\cJ_d^0$.

Let $\mathbf{p}$ be a prime ideal in $\cJ_d^0$, and denote $\mathbf{q}=\mathbf{p}\cap \C\llbracket z \rrbracket$. Since $\sum_{\s=1}^d z^{\s}x^{\s}=1$ in $\cJ_d^0$, there exists $\ii$ such that $z^{\ii}\notin \mathbf{q}$. Consequently, $z^{\ii}$ becomes invertible in the localizations $\C\llbracket z \rrbracket_{\mathbf{q}}$ and $(\cJ_d^0)_{\mathbf{p}}$. Observe that
\[
\cJ_d^0\big[\tfrac{1}{z^{\ii}}\big]\simeq
\C\llbracket z \rrbracket\big[\tfrac{1}{z^{\ii}}\big][x^{\1},\dots,\widehat{x^{\ii}},\dots,x^{\dd}],
\]
where $\widehat{x^{\ii}}$ indicates that the variable $x^{\ii}$ is omitted. Localizing further at $\mathbf{p}$, we obtain
\[
(\cJ_d^0)_{\mathbf{p}}\simeq
\big(\C\llbracket z \rrbracket_{\mathbf{q}}[x^{\1},\dots,\widehat{x^{\ii}},\dots,x^{\dd}]\big)_{\mathbf{p}'},
\]
where $\mathbf{p}'=(\C\llbracket z \rrbracket-\mathbf{q})^{-1}\mathbf{p}$. Hence $(\cJ_d^0)_{\mathbf{p}}$ is flat over $\C\llbracket z \rrbracket_{\mathbf{q}}$ for every prime $\mathbf{p}\subset \cJ_d^0$. It follows that $\cJ_d^0$ is flat over $\C\llbracket z \rrbracket$.

Observe that we have the following short exact sequence of $\cJ^0_d$-modules:
\[
0\rightarrow \cJ^0_d\xrightarrow{\iota} (\cJ^0_d)^{\oplus d}\rightarrow \cJ^1_d\rightarrow 0,
\]
where $\iota(f)=f\bigl(\sum_{\s=1}^d z^{\s}\bar{\partial}x^{\s}\bigr)$. Consider the map $r\colon (\cJ^0_d)^{\oplus d}\rightarrow \cJ^0_d$ defined by
\[
r(f_1,\dots,f_d)=\sum_{\s=1}^d f_{\s}x^{\s}.
\]
Then $r\circ \iota=\mathrm{id}$, which shows that $\cJ^1_d$ is a direct summand of $(\cJ^0_d)^{\oplus d}$. Any
direct summand of a free $\cJ^0_d$-module is projective, hence in particular flat. Consequently, $\cJ_d=\wedge^\bu_{\cJ^0}\cJ^1_d$ is flat over $\cJ^0_d$, and therefore also flat over $\C\llbracket z \rrbracket$.
\end{proof}

\section{Universal Chern--Weil homomorphism}\label{s:chern}

In this section we recall the construction of a homomorphism 
\begin{equation}\label{eq:cw1}
  \mathsf{cw}_d \colon \C[\op{ch}_1,\ldots,\op{ch}_d]_{d+1} \to \HH^2_{Lie} (\lie{witt}_d) .
\end{equation}
from \cite[section 3]{GWvir2d}.
Here, we treat $\C[\op{ch}_1,\ldots,\op{ch}_d]$ as a graded polynomial ring with generator $\op{ch}_k$ carrying weight
$k$.
In particular, the dimension of its weight $d+1$ subspace is one less than the partition number of the integer $d+1$:
\begin{equation}\label{}
  \dim \C[\op{ch}_1,\ldots,\op{ch}_d]_{d+1} = p(d+1) - 1 
\end{equation}
since we through away the single block partition as we do not include $\op{ch}_{d+1}$.

After recalling the definition of $\mathsf{cw}_d$ we move onto the first main result in this paper which is to show that
$\mathsf{cw}_d$ is injective for all $d$.
The fact that $\op{cw}_1$ is an isomorphism is a classic result in conformal field theory and vertex algebras.
Our proof strategy in this section for general dimension $d$ is a generalization of the argument in \cite{GWvir2d}
for $d=2$.

In the last part of the section we generalize the Chern--Weil homomorphism \ref{eq:cw1} with $\lie{witt}_d$ replaced by
the semi-direct product $\lie{witt}_d \ltimes \lie{gl}_r(\cJ_d)$.

\subsection{The universal Chern--Weil homomorphism}
In this section we will be rather brief and refer to \cite{GWvir2d} for more details and proofs.

\paragraph
There is a universal element 
\[
\lie{c}\in C^\bu_{Lie}\left(\lie{witt}_d ; \br C_\bu^\lambda(\cJ_d) \right)^1
\]
in the Chevalley--Eilenberg cochain complex of $\lie{witt}_d$ with
coefficients in the Connes' reduced cyclic complex computing reduced cyclic homology of $\cJ_d$.
This element is of total degree $+1$ and decomposes as
\begin{equation}\label{eq:c}
  \lie{c} = \lie{c}_1 + \lie{c}_2+ \cdots 
\end{equation}
where
\begin{equation}\label{}
  \lie{c}_{k+1} \colon \wedge^{k+1} \lie{witt}_d \to \br C_\bu^\lambda(\cJ_d)
\end{equation}
is defined by
\begin{align}
  \lie{c}_{k+1}(T_0\wedge\cdots\wedge T_{k}) &=\frac{(-1)^k}{2^k} \sum_{\permute \in S_k}(\pm)^{'} \cdot \op{Tr}\left(J T_{0}, J T_{\permute(1)}, \ldots, J T_{\permute(k)}\right) \\
                                             & =\frac{(-1)^k}{2^k\cdot(k+1)} \sum_{\permute' \in S_{k+1}}(\pm)^{''}
                                             \cdot \op{Tr}\left(J T_{\permute'(0)}, J T_{\permute'(1)}, \ldots, J T_{\permute'(k)}\right) 
\end{align}
where
$$
(\pm)'=\mathrm{sgn}(\permute)\cdot\varepsilon(\permute;T_1,\ldots,T_k) ,
$$
$$
(\pm)''=\mathrm{sgn}(\permute')\cdot\varepsilon(\permute';T_0,T_1,\ldots,T_k) ,
$$
and $J \colon \lie{witt}_d \to \lie{gl}_d(\cJ_d)$ is defined in equation \eqref{eq:jacobian}. 

For $A$ a commutative dg algebra, Connes' complex $\overline{C}_\bu^\lambda(A)$ admits, itself, the natural structure of a
commutative dg algebra \cite[proposition 3.7]{loday1984cyclic}. 
It is defined by the formula
\begin{equation}\label{eqn:starproduct}
x*y \define x\times(By),
\end{equation}
where
\[
B(a_0,a_1,\dots,a_d)=\sum^n_{i=0}\pm (1,a_i,\dots,a_n,a_0,\dots,a_{i-1}),
\]
and 
\[
\pm=(-1)^{i\cdot n}\cdot\varepsilon(\permute_i;a_0,a_1,\dots,a_d),\quad \permute_i(a_0,a_1,\dots,a_d)=(a_i,\dots,a_n,a_0,\dots,a_{i-1})
\]
The operation $\times$ is the so-called shuffle product, given by 
\[
(a,a_1,\dots,a_p)\times (a',a_{p+1},\dots,a_{p+q})=\sum_{\text{Shuffle}}(\pm)\cdot\mathrm{sgn}(\permute)\cdot (aa',a_{\permute^{-1}(1)},\dots,a_{\permute^{-1}(p+q)}),
\]
\[
(\pm)=(-1)^{|a'|\sum^p\limits_{i=1}|a_i|}\cdot\varepsilon(\permute_i;a_1,\dots,a_{p+q})
\]
where the sum is over all permutations $\permute$ of $\{1,\dots,p+q\}$ such that $\permute(1)<\cdots<\permute(p)$ and $\permute(p+1)<\cdots<\permute(p+q)$.

Define the cyclic cocycle~$\rho \in C^\bu_\lambda(\cJ_d)$ of total degree one by the formula
\begin{equation}\label{eq:rho}
  \rho(f_0,\ldots,f_d) = \op{Res}(f_0 \del f_1 \cdots \del f_d) .
\end{equation}
where $\op{Res}(- \d^d z)$ is the unique $GL_d$-invariant degree $-d$ cochain map $\cJ_d \to \C$ with the property that $\op{Res}
(\d^d z) =1$.

\paragraph
We are now ready to define the universal Chern--Weil homomorphism
\begin{equation}\label{}
  \mathsf{cw}_d \colon \C[y_1,\ldots,y_d]_{d+1} \to \HH^2_{Lie}(\lie{witt}_d) ,
\end{equation}
where the $y_1,\ldots,y_d$ are polynomial variables and $y_i$ carries homogenous degree $i$.
The subscript in the domain means that we look at total degree $d+1$ elements.
Let $i_1,\ldots,i_d$ be integers such that $i_1+2i_2+\cdots+d i_d=d+1$.
Define
\begin{equation}\label{eq:chernwitt}
  \mathsf{cw}_d (y_1^{i_1} y_2^{i_2} \cdots y_d^{i_d}) \define \rho \left(\lie{c}_1^{*i_1} * \lie{c}_2^{*i_2} * \cdots
    * \lie{c}_d^{i_d}\right) .
\end{equation}
Implicit in this definition is the claim that the this element is in fact a cocycle and we are taking its cohomology
class.

\paragraph[p:cwdr]
The above construction naturally generalizes to the algebra $\lie{witt}_d\ltimes \lie{gl}_r(\cJ_d)$ in the following way.
Suppose that 
\[
  \theta \in \op{S}^{k+1}(\lie{gl}_r^*)^{\lie{gl}_r}
\]
is an invariant polynomial on $r \times r$ matrices of
homogeneous polynomial degree $k+1$.
Define
\begin{equation}\label{}
  \gamma_{\theta} \colon \wedge^{k+1} \lie{gl}_r(\cJ_d) \to \br C_\bu^\lambda(\cJ_d)[1]
\end{equation}
by the formula
\begin{align}
  \gamma_{\theta}(M_0\otimes f_0\wedge\cdots\wedge M_k\otimes f_k) &=\frac{(-1)^k}{2^k} \sum_{\permute \in S_k}(\pm)^{'}
  \cdot \theta(M_0,M_{\permute(1)},\dots,M_{\permute(k)})\cdot \left(f_0, f_{\permute(1)}, \ldots, f_{\permute(k)}\right) .
\end{align}
It follows immediately that $\gamma_\theta$ is a cochain map, $\lie{gl}_r$-equivariant, and also $\lie{witt}
_d$-equivariant.

\paragraph
Let $\theta$ be a degree $k$ invariant polynomial on $r \times r$ matrices as above and suppose $i_1,\ldots,i_{d-k}$ are integers such that 
\begin{equation}\label{}
  i_1 + 2 i_2 + \cdots + (d-k) i_{d-k} = d-k . 
\end{equation}
Define
\begin{equation}\label{}
  \mathsf{cw}_{d,r} (y_1^{i_1} \cdots y_{d-k}^{i_{d-k}} \theta) \define \mathsf{cw}\left( \lie{c}_1^{*i_1} * \cdots
  *\lie{c}_{d-k}^{*i_{d-k}} * \gamma_\theta \right) ,
\end{equation}
which is an element in $\HH^2_{Lie}(\lie{witt}_d \ltimes \lie{gl}_r(\cJ_d))$.

Consider now the ring 
\begin{equation}\label{}
  \C[y_1,\ldots,y_d] \otimes \op{S}^\bu(\lie{gl}_r^*)^{\lie{gl}_r} \cong \C[y_1,\ldots,y_d, \lie{t}_1,\ldots, \lie{t}_r] 
\end{equation}
which we assign a non-cohomological weight grading by declaring that $y_i$ is degree $i$ and $\lie{t}_{j} \in \op{S}^j(\lie{gl}^*_r)$
defined by 
\begin{equation}\label{}
  \lie{t}_j (A) = \op{tr} (A^j)
\end{equation}
is of degree $j$.
Our construction defines a homomorphism
\begin{equation}\label{}
  \mathsf{cw}_{d,r} \colon \C[y_1,\ldots,y_d, \lie{t}_1,\ldots,\lie{t}_r]_{d+1} \to \HH^2_{Lie}(\lie{witt}_d \ltimes
  \lie{gl}_r(\cJ_d)) 
\end{equation}
Explicitly, on the trace generators, this is defined as
\begin{equation}\label{eq:chernwittMixed}
  \mathsf{cw}_{d,r} (y_1^{i_1} y_2^{i_2} \cdots y_d^{i_d} \cdot \lie{t}_1^{k_1} \cdots \lie{t}_r^{k_r} ) \define \rho \left(\lie{c}_1^{*i_1} * \lie{c}_2^{*i_2} * \cdots
  * \lie{c}_d^{*i_d}* \gamma_{\lie{t}_1}^{* k_1} * \cdots \gamma_{\lie{t}_r}^{* k_r} \right) 
\end{equation}
where $i_1,\ldots,i_d,k_1,\ldots,k_r$ are integers satisfying the relation 
\begin{equation}\label{}
  i_1+2i_2+\cdots+d i_d+k_1 + 2 k_2 + \cdots + r k_r =d+1 .
\end{equation}

\subsection{Injectivity}

We will show (a) the cohomology class of any cocycle of the form \eqref{eq:chernwitt} is nontrivial
and (b) such classes are linearly independent.
In other words, we prove the map $\mathsf{cw}_d$
is injective.
Of course, this statement is obvious and well-known in the case $d=1$.
In \cite{GWvir2d} have proven injectivity when $d=2$.
We treat the general case here.

Our notation in this section is 
\begin{equation}\label{}
  \op{ch}_1^{i_1} \cdots \op{ch}_d^{i_d} \define \mathsf{cw}_d (y_1^{i_1} \cdots y_d^{i_d}) 
\end{equation}
where $\sum^{d}\limits_{p=2} p i_p = d+1$.

\paragraph
We begin with the following lemma which expresses the top trace class as a linear combination of elements in the image
of $\mathsf{cw}_d$.

\begin{lemma}\label{lem:chd+1}
In $\mathbb{H}^2_{\mathrm{Lie}}\left(\lie{witt}_{d}\right)$, the class 
\[
  \mathrm{ch}_{d+1} \define \rho(\lie{c}_{d+1}) 
\]
can be expressed as linear combination of elements of the form 
\[
  (\mathrm{ch}_1)^{i_1}(\mathrm{ch}_2)^{i_2}\cdots (\mathrm{ch}_d)^{i_d}
\]
where $\sum^{d}\limits_{p=1}p i_p=d+1$.
In other words, $\op{ch}_{d+1}$ is in the image of $\mathsf{cw}_d$.
\end{lemma}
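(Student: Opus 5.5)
The idea is that $\op{ch}_{d+1}$ is a ``trace of the $(d+1)$st power'' of a $d\times d$ matrix, and such a trace is decomposable by the Newton identities (equivalently, Cayley--Hamilton). The plan is to lift the computation to the semi-direct product $\lie{witt}_d \ltimes \lie{gl}_d(\cJ_d)$, where the invariant-polynomial construction $\theta \mapsto \gamma_\theta$ of paragraph \ref{p:cwdr} is available, and then pull back to $\lie{witt}_d$.

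\textbf{Step 1: realize the $\lie{c}_{k+1}$ as pullbacks of the $\gamma_{\lie{t}_{k+1}}$.} Because the Jacobian satisfies the crossed-homomorphism identity $J[X,Y] = X(JY) - (-1)^{|X||Y|} Y(JX) + [JX,JY]$ (chain rule), the map
\[
\Phi \colon \lie{witt}_d \to \lie{witt}_d \ltimes \lie{gl}_d(\cJ_d), \qquad \Phi(X) = (X, JX),
\]
is a map of dg Lie algebras. Extend $\gamma_\theta$ to the semi-direct product by letting it ignore the $\lie{witt}_d$ component. Comparing the defining formulas, we expect
\[
\Phi^*\gamma_{\lie{t}_{k+1}} = \lie{c}_{k+1}
\]
on the nose, since $\lie{t}_{k+1}(M_0,\ldots,M_k)$ is the symmetrized trace and $\op{Tr}(JT_0,\ldots,JT_k)$ in the cyclic complex is the matrix trace of the tensor. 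Only the normalization needs checking. Since $\Phi^*$ is compatible with the $*$-product and with $\rho$, it then suffices to prove that in $\HH^2_{Lie}(\lie{witt}_d \ltimes \lie{gl}_d(\cJ_d))$ the class $\rho(\gamma_{\lie{t}_{d+1}})$ is a linear combination of the classes $\rho(\gamma_{\lie{t}_1}^{*k_1}*\cdots*\gamma_{\lie{t}_d}^{*k_d})$ with $\sum_j j k_j = d+1$.

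\textbf{Step 2: multiplicativity of $\theta \mapsto \gamma_\theta$.} The key claim is that, up to an explicit nonzero constant depending only on degrees and up to a coboundary in $C^\bu_{Lie}(-;\br C^\lambda_\bu(\cJ_d))$,
\[
\gamma_\theta * \gamma_{\theta'} \sim \gamma_{\theta\theta'} .
\]
Here $\theta\theta'$ is the product in $\op{S}^\bu(\lie{gl}_d^*)^{\lie{gl}_d}$. This is the cochain-level shadow of the Loday--Quillen--Tsygan statement that the Chern-character map from invariant polynomials to cyclic homology is a ring map for Connes' product \eqref{eqn:starproduct}. The shuffle product in $x \times By$ reproduces the symmetrization over arguments, and $B$ accounts for the cyclic rotation. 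I would prove it by expanding $\gamma_\theta * \gamma_{\theta'}$ on $M_0 f_0 \wedge \cdots \wedge M_{k+l+1} f_{k+l+1}$. The fully symmetrized part gives $\gamma_{\theta\theta'}$. The remaining terms, which differ by the position of the leading slot ``$1$'' introduced by $B$ and by the cyclic order, should assemble into $b$ or Chevalley--Eilenberg coboundaries. The alternative is to do this only after applying $\rho$, which is easier because $\rho$ kills the slot carrying $1$ after one differentiation. This is the main obstacle, and I would first try to prove the identity only after applying $\rho$, where many terms vanish since $\partial 1 = 0$.

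\textbf{Step 3: Newton identities.} For $d\times d$ matrices, the Newton identities combined with Cayley--Hamilton give, in $\op{S}^\bu(\lie{gl}_d^*)^{\lie{gl}_d}$,
\[
\lie{t}_{d+1} = \sum_{\sum_j j k_j = d+1,\; k_j \ge 0,\; j \le d} a_{k}\, \lie{t}_1^{k_1}\cdots \lie{t}_d^{k_d},
\]
because the elementary symmetric function $e_{d+1}$ vanishes on $d\times d$ matrices. Apply the linear map $\theta \mapsto \rho(\gamma_\theta)$, use Step 2 to replace $\gamma_{\lie{t}_1^{k_1}\cdots\lie{t}_d^{k_d}}$ by a nonzero multiple of $\gamma_{\lie{t}_1}^{*k_1}*\cdots*\gamma_{\lie{t}_d}^{*k_d}$ in cohomology, and pull back along $\Phi$ using Step 1. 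This expresses $\op{ch}_{d+1} = \rho(\lie{c}_{d+1})$ as a linear combination of the classes $\op{ch}_1^{i_1}\cdots\op{ch}_d^{i_d}$ with $\sum_p p\, i_p = d+1$, as required by Lemma \ref{lem:chd+1}.
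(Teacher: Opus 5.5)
Your strategy (Cayley--Hamilton/Newton for $d\times d$ matrices, transported through invariant polynomials) can be made to work. As written, however, Step 1 is misstated and Step 2, which carries the real content, is left open with a misdirected heuristic.

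\textbf{Step 1.} Read $\lie{t}_{k+1}$ as the symmetrized trace, as you do. Then $\Phi^*\gamma_{\lie{t}_{k+1}}$ and $\lie{c}_{k+1}$ are \emph{not} equal as $\br C^\lambda_\bullet(\cJ_d)$-valued cochains. On inputs $M_if_i$, $\gamma_{\lie{t}_{k+1}}$ pairs $\theta(M_0,\ldots,M_k)$, which does not depend on $\tau$, with every ordering $(f_0,f_{\tau(1)},\ldots,f_{\tau(k)})$. The trace cochain defining $\lie{c}_{k+1}$ instead pairs $\op{tr}(M_0M_{\tau(1)}\cdots M_{\tau(k)})$ with that ordering. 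Already for $k=2$ the two differ by a multiple of $\op{tr}(M_0[M_1,M_2])\big((f_0,f_1,f_2)+(f_0,f_2,f_1)\big)$, which is not zero in Connes' complex. So you cannot pull back factor by factor and then invoke compatibility of $\Phi^*$ with $*$. A smaller point: with the paper's convention $(JX)^i_j=\partial g^i/\partial z^j$ one gets $J[X,Y]=X(JY)-Y(JX)-[JX,JY]$. The dg Lie map is therefore $X\mapsto(X,-JX)$, which only changes signs.

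\textbf{Step 2.} Your heuristic does not apply. In $x*y=x\times By$ the $1$ inserted by $B$ is multiplied into the zeroth entry of $x$, so no entry of the resulting chain is $1$, and $\partial 1=0$ plays no role.

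\textbf{The missing idea.} What repairs both steps is that $\rho$ is graded-antisymmetric in all $d+1$ arguments. It is antisymmetric in $f_1,\ldots,f_d$ because the $\partial f_i$ anticommute. It is invariant under the signed cyclic rotation because it is a cyclic cocycle. Finally, $S_d$ together with the $(d+1)$-cycle generates $S_{d+1}$, and the cyclic sign $(-1)^d$ is the sign of that cycle.

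Every chain occurring in $\gamma_{\theta_1}*\cdots*\gamma_{\theta_m}(M_0f_0,\ldots,M_df_d)$ is a reordering of $(f_0,\ldots,f_d)$. It is weighted by exactly the permutation sign built into the formulas for $\gamma$, $B$ and the shuffle product. Hence after applying $\rho$ all terms add with the same sign, and you get an identity of cochains, with no coboundaries needed:
\[
\rho\left(\gamma_{\theta_1}*\cdots*\gamma_{\theta_m}\right)(M_0f_0,\ldots,M_df_d)=C\,(\theta_1\cdots\theta_m)(M_0,\ldots,M_d)\,\rho(f_0,\ldots,f_d),\qquad C\neq 0 .
\]
Here the right side uses the polarization of $\theta_1\cdots\theta_m$. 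By cyclicity, $\sum_{\tau\in S_k}\op{tr}(M_{a_0}M_{\tau(a_1)}\cdots M_{\tau(a_k)})$ is $k!$ times the symmetrized trace. The same argument then shows that $\rho(\lie{c}_{k_1}*\cdots*\lie{c}_{k_m})$ is a nonzero multiple of $\Phi^*\rho(\gamma_{\lie{t}_{k_1}}*\cdots*\gamma_{\lie{t}_{k_m}})$. With this in hand, your Step 3 goes through at the cochain level.

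\textbf{Comparison with the paper.} The paper applies the polarized Procesi--Razmyslov identity directly to the Jacobians $JT_0,\ldots,JT_d$ in $\lie{gl}_d(\cJ_d^{\otimes d+1})$. It stays inside $\lie{witt}_d$ and reads off a generating-function formula. You instead use the unpolarized Newton identity in $\op{S}^\bullet(\lie{gl}_d^*)^{\lie{gl}_d}$ and route it through $\lie{witt}_d\ltimes\lie{gl}_d(\cJ_d)$. Your packaging isolates the structural point, namely that $\theta\mapsto\rho(\gamma_\theta)$ is multiplicative, and handles the $\lie{gl}_r$-extended classes at the same time. The paper's route is more direct. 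Note that it implicitly relies on the same antisymmetry of $\rho$ to identify products of cycle traces in $\cJ_d^{\otimes d+1}$ with the classes $\rho(\lie{c}_{k_1}*\cdots*\lie{c}_{k_m})$.
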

\begin{proof}
    The proof is by the Procesi-Razmyslov fundamental trace identity, equivalently the multilinear polarized
    Cayley-Hamilton trace identity (see for example \cite[Theorem 4.3(b)]{PROCESI1976306}). We apply the following trace identity to the matrix ring $\lie{gl}_d(\cJ_d^{\otimes d+1})$
    \[
    \sum_{\tau\in S_{d+1}}\mathrm{sgn}(\tau)\prod_{C=(i_1...i+r)\in \mathrm{Cycles}(\tau)}\mathrm{Tr}\big(M_{i_1}\cdots M_{i_r}\big)=0,\quad M_0,...,M_d\in \lie{gl}_d(\cJ_d^{\otimes d+1}).
    \]
    The explicit formula is 
    \[
      \op{ch}_{d+1} =
\frac{(-1)^{d+1}}{d!}
\op{exp}\left(
  \sum_{k=1}^d (-1)^{k-1}(k-1)!\op{ch}_k t^k
\right) |_{t^{d+1}} 
\]
where the subscript means we take the coefficient of $t^{d+1}$.
\end{proof}

\paragraph
In what follows it will be more convenient to label our degree $d+1$ classes by partitions of the integer $d+1$.
For example 
\[
  \op{ch}_{(1,\ldots,1)} = \op{ch}_1^{d+1} 
\]
while
\[
  \op{ch}_{(2,1,\ldots,1)} = \op{ch}_1^{d-1} \op{ch}_2 . 
\]
More generally, if $\lambda = (1^{i_1}, \ldots, d^{i_d})$ is a partition of $d+1$ then we define 
\begin{equation}\label{}
  \op{ch}_\lambda \define \op{ch}_1^{i_1} \cdots \op{ch}_d^{i_d}
\end{equation}
We also set $\op{ch}_{(d+1)} = \op{ch}_{d+1}$.

\paragraph{Injectivity in dimension $d=2$}
Before diving into the main proof of injectivity, we survey the $d=2$ case. 
When $d=2$ we have two classes $\op{ch}_1^3$ and $\op{ch}_1 \op{ch}_2$ which span $\HH^2_{Lie}(\lie{witt}_d)$.
Our strategy is to find, for example, a chain $\til{\mathbb{v}}\in \bigwedge^{\bullet}\lie{witt}_{2}$ of total degree $-2$
which detects the nontriviality of $\op{ch}_1^3$ at the
cochain level:
$$
(\dbar + \bd^{(2)})\til{\mathbb{v}}=0,\quad (\mathrm{ch}_1^3)(\til{\mathbb{v}})\neq 0.
$$
Here, $\dbar$ is the linear part of the differential and $\bd^{(2)}$ is induced from the Lie bracket.
In other words, $\til{\mathbb{v}}$ is a cycle in the Chevalley--Eilenberg complex for $\lie{witt}_2$, and it pairs
nontrivially with the cocycle $\op{ch}_1^3$.
This shows that the image of $\mathsf{cw}_2$ is at least one-dimensional.
To show that it is two-dimensional, it suffices to find another chain which pairs nontrivially with a linearly
independent cohomology class.
One obvious choice is the class $\op{ch}_1 \op{ch}_2$.
Instead, we will find it easier to find a cycle which is dual to the class $\op{ch}_3$, which, in this dimension, is a linear combination 
\begin{equation}\label{}
  \op{ch}_3 = -\frac{1}{12} \op{ch}_1^3 + \frac12  \op{ch}_1 \op{ch}_2 .
\end{equation}

In dimension two, it is easy to write down the cycles which implement this strategy.
One starts with the two chains:
\begin{align*}
  \mathbb{v}_{(1,1,1)}(2) & \define  2 P_2\partial_{\1}\wedge (z^{\1})^3\partial_{\1}\wedge (z^{\2})^2\partial_{\2} \\
  \mathbb{v}_{(3)}(2) & \define P_2\partial_{\1}\wedge z^{\1}z^{\1}\mathsf{Eu}\wedge z^{\2} \mathsf{Eu}
  -P_2\partial_{\1} \wedge z^{\1}z^{\2} \mathsf{Eu}\wedge z^{\1} \mathsf{Eu},
\end{align*}
where $\mathsf{Eu} = z^\1 \del_{\1} + z^{\2} \del_{\2}$ is the total Euler vector field.
We will explain momentarily, see definition \ref{dfn:partitionv}, how one associates a chain of this type to a general
partition.
The key point is that these chains pair nontrivially with $\op{ch}_1^3$ and $\op{ch}_3$ respectively, see table
\ref{tbl:pairing2} (and ignore the column $\op{ch}_{(2,1)}(2)$ which we have included only for transparency).

\begin{table}[!h]
        \centering
        
\begin{tabular}{|p{0.10\textwidth}|p{0.12\textwidth}|p{0.10\textwidth}|p{0.10\textwidth}|}
\hline 
  & $\displaystyle \mathrm{ch}_{(1,1,1)}(2)$ & $\displaystyle \mathrm{ch}_{(2,1)}(2)$ & $\displaystyle \mathrm{ch}_{(3)}(2)$ \\
\hline 
 $\displaystyle \mathbb{v}_{(1,1,1)}( 2)$ & $\displaystyle \neq 0$ & $\displaystyle \neq 0$ & $\displaystyle 0$ \\
\hline 
 $\displaystyle \mathbb{v}_{(3)}( 2)$ & $\displaystyle \neq 0$ & $\displaystyle \neq 0$ & $\displaystyle \neq 0$ \\
 \hline
\end{tabular}
\caption{Pairing matrix, $d=2$}
\label{tbl:pairing2}
        \end{table}

On the other hand, the chains above are not cycles in the complex $C_\bu^{Lie}(\lie{witt}_d)$ computing the Lie
algebra hyperhomology of $\lie{witt}_d$.
Fortunately, we can run a version of descent to introduce deformations of these chains which are cocycles.
Explicitly, in dimension $d=2$, one choice of descent data leads to:
\begin{align*}
\til{\mathbb v}_{(1,1,1)}(2)
&=
2P_2\partial_{\1}\wedge (z^{\1})^3\partial_{\1}\wedge (z^{\2})^2\partial_{\2}
\\
&\quad
+
\left(
7z^{\1}x^{\2}
+
  (z^{\1})^2x^{\1}x^{\2}
\right)
\partial_{\1}\wedge (z^{\2})^2\partial_{\2}
\\
&\quad
+
\left(
x^{\1}
+
z^{\2}x^{\1}x^{\2}
\right)
\partial_{\1}\wedge (z^{\1})^3\partial_{\1}
\\
&\quad
-
\left(
3(z^{\1})^2(x^{\1})^2
-
6z^{\1}x^{\1}
+
7
\right)
z^{\1}x^{\2}\,
\partial_{\1}\wedge z^{\2}\partial_{\2}.
\end{align*}
and
\begin{align*}
  \til{\mathbb v}_{(3)}(2)
&=
P_2\partial_{\1}\wedge (z^{\1})^2\mathsf{Eu}\wedge z^{\2}\mathsf{Eu}
-
P_2\partial_{\1}\wedge z^{\1}z^{\2}\mathsf{Eu}\wedge z^{\1}\mathsf{Eu}
\\
&\quad
+
\left(4z^{\1}x^{\2}\partial_{\1}-2z^{\1}x^{\1}\partial_{\2}\right)\wedge z^{\2}\mathsf{Eu}
\\
&\quad
+
2x^{\1}\partial_{\1}\wedge (z^{\1})^2\mathsf{Eu}
\\
&\quad
+
\left(
-z^{\2}x^{\2}\partial_{\1}
+
2z^{\1}x^{\1}\partial_{\1}
+
z^{\2}x^{\1}\partial_{\2}
\right)
\wedge z^{\1}\mathsf{Eu}
\\
&\quad
+
\left(3x^{\2}\partial_{\1}-x^{\1}\partial_{\2}\right)\wedge z^{\1}z^{\2}\mathsf{Eu}
\\
&\quad
-
\left(
-2z^{\1}x^{\1}+11
\right)x^{\2}\,
\partial_{\1}\wedge z^{\1}\mathsf{Eu}
-
2x^{\2}\,
\partial_{\2}\wedge z^{\1}z^{\2}\mathsf{Eu}.
\end{align*}

The last remark is that these correction terms automatically pair trivially, by type reasons, with $\op{ch}_{(1,1,1)}$ and $\op{ch}_{(3)}
$.
Thus, the contents of table \ref{tbl:pairing2} are unchanged if we replace the $\mathbb{v}$'s with their cocycle
corrections $\til{\mathbb{v}}$'s.
        This completes the proof that $\mathsf{cw}_2$ is injective. 

\paragraph{Injectivity for $d\geq 2$}
Recall the Lie algebra of formal vector fields $\lie{w}_d$ on the $d$-disk is a dg Lie subalgebra of $\lie{witt}_d$.
Let $\lie{w}^{poly}_d \subset \lie{w}_d$ denote the Lie subalgebra of formal vector fields whose coefficients are
polynomial in $d$-variables.

We work with Chevalley--Eilenberg chains rather than cochains.
We let $\dbar + \bd^{(2)}$ denote the total differential acting on the Chevalley--Eilenberg complex
$C_\bu(\lie{witt}^{poly}_d)$ computing the Lie algebra homology of $\lie{witt}^{poly}_d$.
(So, $\dbar$ is the linear part of the differential and $\bd^{(2)}$ is induced from the bracket.)
In the following definition, we view $\wedge^\bu \lie{w}_d^{poly}$ as subspace of the total degree $(-d)$ part of
$C_\bu(\lie{witt}^{poly}_d)$.

\begin{definition}
  Let $\mathbf{Z}_1(d)\subset \wedge^d \lie{w}^{poly}_d$ be the subspace consisting of
    $$
    \mathbb{w}\in \wedge^d \lie{w}^{\mathrm{pol}}_d,\quad \mathbf{d}^{(2)}\mathbb{w}=0,\ \text{and}\  \mathbf{d}^{(2)}
    (P_d\partial_{\1}\wedge\mathbb{w})=\dbar(-).
    $$
    In other words, $d$-cochains $\mathbb{w}$ which are closed for the Chevalley--Eilenberg differential of formal vector fields and
    such that in the full Chevalley--Eilenberg complex for $\lie{witt}_d$ the quadratic part of the
    Chevalley--Eilenberg applied to $P_d \del_{\1} \wedge \mathbb{w}$ is $\dbar$-exact.
\end{definition}

%Recall the following classical fact in \cite[pp84,Lemma 2.]{FuksBook} (we reformulate in terms of homology by using the
%fact that the continuous linear dual of $H_{q}(\lie{w}_d^{poly})$ is isomorphic to $H^q(\lie{w}_d)$.
%\begin{lemma}
%  $H_{q}(\lie{w}_d^{poly})=0$ for $0<q\leq2n$.
%\end{lemma}
%This vanishing result easily leads to the following.
%Let $F_r C_\bu^{Lie}(\lie{witt}_d)$ be the increasing filtration which consists of chains which carry total graded
%polynomial degree $\leq r$.

\begin{lemma}\label{lem:descent}
  For $\mathbb{w}\in \mathbf{Z}_1(d)$, the class of the cochain 
  \[
    \mathbb{v}^{(d-1)} \define P_d\del_{\1} \wedge\mathbb{w}
  \]
  in $\op{gr}_{d+1} C^{Lie}_{\bu}(\lie{witt}_d)$ lifts
  to a cocycle in $F_{d+1} C^{Lie}_{\bu}(\lie{witt}_d)$.
\end{lemma}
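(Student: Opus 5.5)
We construct the cycle by a descent (zig-zag) argument. We build a sequence of corrections $\mathbb{v}^{(j)}$ for $j=d-1,d-2,\ldots,0$, where $\mathbb{v}^{(j)}$ has form degree $j$ in the $\cJ_d$-direction, meaning it is a sum of wedges of $d+1$ elements of $\lie{witt}_d$ whose $\cJ_d$-degrees sum to $j$. All of these pieces should lie in $F_{d+1}$, with lower-order terms landing in $F_{d}$. The desired lift is then
\[
\til{\mathbb{v}} = \mathbb{v}^{(d-1)} + \mathbb{v}^{(d-2)} + \cdots + \mathbb{v}^{(0)},
\]
and $(\dbar+\bd^{(2)})\til{\mathbb{v}}=0$ unwinds into the ladder of equations
\[
\dbar \mathbb{v}^{(d-1)}=0,\qquad \dbar \mathbb{v}^{(j-1)} = -\bd^{(2)}\mathbb{v}^{(j)} \quad (1\le j\le d-1),\qquad \bd^{(2)}\mathbb{v}^{(0)}=0 .
\]
The Chevalley--Eilenberg differential $\bd^{(2)}$ preserves form degree. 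It also preserves arity up to the drop by one; I have used the convention that the wedge length shifts so that total degree is preserved. The first equation holds because $\dbar P_d=0$ and $\mathbb{w}$ has polynomial coefficients. The second equation with $j=d-1$ is exactly the second defining condition of $\mathbf{Z}_1(d)$, which produces $\mathbb{v}^{(d-2)}$.

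For the inductive step, suppose $\mathbb{v}^{(j)}$ has been constructed. Since $\bd^{(2)}$ anticommutes with $\dbar$ and squares to zero, we get
\[
\dbar(\bd^{(2)}\mathbb{v}^{(j-1)}) = -\bd^{(2)}\dbar\mathbb{v}^{(j-1)} = \bd^{(2)}\bd^{(2)}\mathbb{v}^{(j)}=0 .
\]
So $\bd^{(2)}\mathbb{v}^{(j-1)}$ is a $\dbar$-cocycle in form degree $j-1$. We want it to be $\dbar$-exact whenever $j-1\ge 1$. Here we use Proposition \ref{prop:JouanolouWitt} together with Künneth, which is legitimate by the flatness in Proposition \ref{prop:flatness} since the tensor powers are taken over $\C$ and are completed appropriately. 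These give that the $\dbar$-cohomology of $\wedge^\bu\lie{witt}_d$ is $\wedge^\bu$ of $H^0\oplus H^{d-1}[-(d-1)]$. Its form degree takes only the values $0, d-1, 2(d-1),\ldots$, so every intermediate form degree $1\le j-1\le d-2$ is acyclic. This gives $\mathbb{v}^{(j-2)}$ at each step, and the descent runs down to form degree $0$.

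The remaining condition, $\bd^{(2)}\mathbb{v}^{(0)}=0$, is where the hypothesis $\bd^{(2)}\mathbb{w}=0$ and the filtration enter. The cycle $\bd^{(2)}\mathbb{v}^{(0)}$ is $\dbar$-closed in form degree $0$ and lies in $\wedge^\bu\lie{w}_d$ up to exact terms. We work in $\op{gr}_{d+1}$ of the polynomial-degree filtration, with the $x$-variables counted with negative weight, so that $P_d\del_\1$ has weight $-d-1$. There we argue that the form-zero obstruction is a class in the polynomial part that must vanish by a weight count: the total weight is pinned at $d+1$, and any surviving form-degree-$0$ term of that weight comes only from $\bd^{(2)}\mathbb{w}$ paired against the residue part, which is zero. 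Equivalently, we adjust $\mathbb{v}^{(0)}$ by a $\dbar$-closed element of $H^0$ so that the leftover lies in $F_{d}$. This is the statement of the lemma: the cocycle condition holds modulo lower filtration, and the lift lives in $F_{d+1}$.

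I expect this last step to be the main obstacle. It requires carefully checking that the $\dbar$-primitives can be chosen to respect the filtration $F_{d+1}$, for instance by choosing homotopies that are homogeneous for the $GL_d$-weight or Euler grading, and that the final obstruction in form degree $0$ is controlled by $\bd^{(2)}\mathbb{w}=0$. My first attempt would be to fix an explicit homogeneous contracting homotopy for $\dbar$ on $\cJ_d$ in positive degrees below $d-1$, built from the Koszul-type homotopy given by contraction with $z^\s$. This keeps every correction weight-homogeneous and makes the filtration bookkeeping automatic.
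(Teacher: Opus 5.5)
Your ladder of descent equations and the intermediate steps match the paper's zig-zag argument. Your K\"unneth justification that internal degrees $1,\ldots,d-2$ of $\wedge^\bullet\lie{witt}_d$ are $\dbar$-acyclic is in fact more careful than the paper's, although flatness over $\C\llbracket z\rrbracket$ plays no role in a tensor product over $\C$. The gap is the last step.

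First, your bookkeeping hides what the final obstruction is. Since $\bd^{(2)}$ lowers wedge length by one and $\dbar$ preserves it, total degree $-2$ forces $\mathbb{v}^{(j)}\in\wedge^{j+2}$, not $\wedge^{d+1}$. So $\mathbb{v}^{(0)}\in\wedge^2(\lie{witt}_d)^0$, and the obstruction $\bd^{(2)}\mathbb{v}^{(0)}$ is a single $\dbar$-closed degree-zero element of $\lie{witt}_d$. Because there are no coboundaries in degree $0$, it is an element of $H^0(\lie{witt}_d)=\lie{w}_d$.

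Neither of your mechanisms kills this element.
\begin{itemize}
\item The hypothesis $\bd^{(2)}\mathbb{w}=0$ concerns the polynomial chain inside $\mathbb{v}^{(d-1)}=P_d\del_\1\wedge\mathbb{w}$. The degree-zero piece $\mathbb{v}^{(0)}$ is produced by the zig-zag and generally has $x$-dependent coefficients (compare the $d=2$ formulas). So this hypothesis says nothing about $\bd^{(2)}\mathbb{v}^{(0)}$.
\item A weight count also fails. Use the Euler weight, which both $\dbar$ and $\bd^{(2)}$ preserve. For the chains of interest, $P_d\del_\1$ has weight $-d-1$ and $\mathbb{w}_\lambda(d)$ has weight $d+1$, so $\mathbb{v}^{(d-1)}$ has weight $0$. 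The obstruction is therefore a weight-zero element of $\lie{w}_d$, that is, a linear vector field in $\lie{gl}_d$. Nothing in a weight or $GL_d$-type count forces it to vanish; a nonzero multiple of $\mathsf{Eu}$ is perfectly allowed.
\end{itemize}

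Retreating to ``the cocycle condition holds modulo lower filtration'' is not what the lemma asserts. It would also not suffice downstream: the pairings in proposition \ref{prop:upper-triangular-pairing} are homology invariants only for genuine cycles with $(\dbar+\bd^{(2)})\til{\mathbb{v}}=0$.

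The missing idea is that formal vector fields form a perfect Lie algebra: $H_1^{Lie}(\lie{w}_d)=0$, i.e.\ $\lie{w}_d=[\lie{w}_d,\lie{w}_d]$. For instance, $\sum_\s[\del_\s,z^\s\mathsf{Eu}]=(d+1)\mathsf{Eu}$. This lets you write the obstruction as $\bd^{(2)}\mathbb{u}$ with $\mathbb{u}\in\wedge^2\lie{w}_d$. Since $\dbar\mathbb{u}=0$, replacing $\mathbb{v}^{(0)}$ by $\mathbb{v}^{(0)}-\mathbb{u}$ preserves $\dbar\mathbb{v}^{(0)}+\bd^{(2)}\mathbb{v}^{(1)}=0$ and gives $\bd^{(2)}\mathbb{v}^{(0)}=0$ exactly. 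This is how the paper closes the argument; no special homogeneous contracting homotopy or filtration bookkeeping is needed.
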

\begin{proof}
    We refer to figure \ref{fig:Zig-Zag} to elucidate the following zig-zag argument.
    Starting with $\mathbb{v}^{(d-1)}$, we construct a total degree $(-2)$ cocycle in $C^{Lie}_\bu (\lie{witt}_d)$ of the form 
    \begin{equation}\label{eq:sumcocycle}
      \til{\mathbb{v}} = \mathbb{v}^{(d-1)} + \cdots + \mathbb{v}^{(0)} 
    \end{equation}
where $\mathbb{v}^{(j)}$ has internal (Joaunolou) degree $j$.
    By assumption, there is $\mathbb{v}^{(d-2)}$ such that 
    \[
      \dbar \mathbb{v}^{(d-2)} + \bd^{(2)} \mathbb{v}^{(d-1)} = 0 .
    \]
    From this equation, we see that $\dbar\bd^{(2)} \mathbb{v}^{(d-2)} = 0$.
    Since the cohomology of the Joaunolou model is concentrated in degrees up to $0$ and $d-1$ we see that there exists
    $\mathbb{v}^{(d-3)}$ such that $\dbar \mathbb{v}^{(d-3)} + \bd^{(2)} \mathbb{v}^{(d-2)} = 0$.
    Continuing on in this way, we produce $\mathbb{v}^{(d-1)}, \mathbb{v}^{(d-2)}, \ldots, \mathbb{v}^{(1)}, \mathbb{w}$
    together which
    satisfy the descent equations 
    \begin{align*}
      \dbar \mathbb{v}^{(d-1)} & = 0 \\
      \dbar \mathbb{v}^{(d-2)} + \bd^{(2)} \mathbb{v}^{(d-1)} & = 0 \\
                                                             & \vdots \\
      \dbar \mathbb{w} + \bd^{(2)} \mathbb{v}^{(1)} & = 0 .
    \end{align*}
    In particular:
    \begin{equation}\label{}
      \left(\dbar + \bd^{(2)}\right)\left(\mathbb{v}^{(d-1)} + \cdots + \mathbb{v}^{(1)} + \mathbb{w}\right) = \bd^{(2)} 
      \mathbb{w} .
    \end{equation}
    In other words, the obstruction for the sum inside parentheses to be a cocycle is $\bd^{(2)} \mathbb{w}$.
    The issue is that we cannot assume that $\mathbb{w}$ is an element of $\wedge^2 \lie{w}_d^{poly}$, it is an element
    of $\wedge^2 (\lie{witt}_d^{poly})^0$.
    But, from the last descent equation we see that $\bd^{(2)} \mathbb{w}$ is $\dbar$-closed hence 
    \begin{equation}\label{}
      \bd^{(2)} \mathbb{w} \in \lie{w}^{poly}_d . 
    \end{equation}
    Since this element is automatically $\bd^{(2)}$-closed (where, we now mean the Chevalley--Eilenberg differential for
    $\lie{w}_d^{poly}$), it follows from $H_1^{Lie}(\lie{w}^{poly}_d) = 0$
    that there exists $\mathbb{u} \in \wedge^2 \lie{w}_d$ with the property that $\bd^{(2)} \mathbb{u} =
    \bd^{(2)} \mathbb{w}$.
    Finally, define:
    \begin{equation}\label{}
      \mathbb{v}^{(0)} \define \mathbb{w} - \mathbb{u} .
    \end{equation}
    The desired cocycle is then \eqref{eq:sumcocycle}.
    \begin{figure}[htbp]
        \centering

\tikzset{every picture/.style={line width=0.75pt}} %set default line width to 0.75pt        

\begin{tikzpicture}[x=0.75pt,y=0.75pt,yscale=-1,xscale=1]
%uncomment if require: \path (0,362); %set diagram left start at 0, and has height of 362

%Shape: Axis 2D [id:dp4320338525459244] 
\draw  (475.2,323.56) -- (474.49,35.63)(125.93,295.62) -- (513.92,294.67) (469.51,42.64) -- (474.49,35.63) -- (479.51,42.62) (132.94,300.6) -- (125.93,295.62) -- (132.92,290.6) (470,243.78) -- (480,243.76)(469.88,192.78) -- (479.88,192.76)(469.75,141.78) -- (479.75,141.76)(469.63,90.78) -- (479.63,90.76)(424.14,299.89) -- (424.11,289.89)(373.14,300.02) -- (373.11,290.02)(322.14,300.14) -- (322.11,290.14)(271.14,300.27) -- (271.11,290.27)(220.14,300.39) -- (220.11,290.39)(169.14,300.52) -- (169.11,290.52) ;
\draw   ;
%Straight Lines [id:da2900421336009549] 
\draw [color={rgb, 255:red, 189; green, 16; blue, 224 }  ,draw opacity=1 ][fill={rgb, 255:red, 74; green, 144; blue, 226 }  ,fill opacity=1 ]   (221,93) ;
\draw [shift={(221,93)}, rotate = 0] [color={rgb, 255:red, 189; green, 16; blue, 224 }  ,draw opacity=1 ][fill={rgb, 255:red, 189; green, 16; blue, 224 }  ,fill opacity=1 ][line width=0.75]      (0, 0) circle [x radius= 3.35, y radius= 3.35]   ;
%Straight Lines [id:da5500088828853273] 
\draw [color={rgb, 255:red, 0; green, 0; blue, 0 }  ,draw opacity=1 ][fill={rgb, 255:red, 74; green, 144; blue, 226 }  ,fill opacity=1 ]   (208,92.91) -- (169,92.91) ;
\draw [shift={(169,92.91)}, rotate = 180] [color={rgb, 255:red, 0; green, 0; blue, 0 }  ,draw opacity=1 ][fill={rgb, 255:red, 0; green, 0; blue, 0 }  ,fill opacity=1 ][line width=0.75]      (0, 0) circle [x radius= 3.35, y radius= 3.35]   ;
\draw [shift={(210,92.91)}, rotate = 180] [fill={rgb, 255:red, 0; green, 0; blue, 0 }  ,fill opacity=1 ][line width=0.08]  [draw opacity=0] (12,-3) -- (0,0) -- (12,3) -- cycle    ;
%Straight Lines [id:da1503395154909084] 
\draw [color={rgb, 255:red, 0; green, 0; blue, 0 }  ,draw opacity=1 ][fill={rgb, 255:red, 74; green, 144; blue, 226 }  ,fill opacity=1 ]   (221.56,102) -- (221.27,141) ;
\draw [shift={(221.27,141)}, rotate = 90.44] [color={rgb, 255:red, 0; green, 0; blue, 0 }  ,draw opacity=1 ][fill={rgb, 255:red, 0; green, 0; blue, 0 }  ,fill opacity=1 ][line width=0.75]      (0, 0) circle [x radius= 3.35, y radius= 3.35]   ;
\draw [shift={(221.58,100)}, rotate = 90.44] [fill={rgb, 255:red, 0; green, 0; blue, 0 }  ,fill opacity=1 ][line width=0.08]  [draw opacity=0] (12,-3) -- (0,0) -- (12,3) -- cycle    ;
%Straight Lines [id:da014870988023802267] 
\draw [color={rgb, 255:red, 189; green, 16; blue, 224 }  ,draw opacity=1 ][fill={rgb, 255:red, 74; green, 144; blue, 226 }  ,fill opacity=1 ]   (273,141) ;
\draw [shift={(273,141)}, rotate = 0] [color={rgb, 255:red, 189; green, 16; blue, 224 }  ,draw opacity=1 ][fill={rgb, 255:red, 189; green, 16; blue, 224 }  ,fill opacity=1 ][line width=0.75]      (0, 0) circle [x radius= 3.35, y radius= 3.35]   ;
%Straight Lines [id:da2043361360051017] 
\draw [color={rgb, 255:red, 0; green, 0; blue, 0 }  ,draw opacity=1 ][fill={rgb, 255:red, 74; green, 144; blue, 226 }  ,fill opacity=1 ]   (260,140.91) -- (221,140.91) ;
\draw [shift={(221,140.91)}, rotate = 180] [color={rgb, 255:red, 0; green, 0; blue, 0 }  ,draw opacity=1 ][fill={rgb, 255:red, 0; green, 0; blue, 0 }  ,fill opacity=1 ][line width=0.75]      (0, 0) circle [x radius= 3.35, y radius= 3.35]   ;
\draw [shift={(262,140.91)}, rotate = 180] [fill={rgb, 255:red, 0; green, 0; blue, 0 }  ,fill opacity=1 ][line width=0.08]  [draw opacity=0] (12,-3) -- (0,0) -- (12,3) -- cycle    ;
%Straight Lines [id:da7612997244093023] 
\draw [color={rgb, 255:red, 0; green, 0; blue, 0 }  ,draw opacity=1 ][fill={rgb, 255:red, 74; green, 144; blue, 226 }  ,fill opacity=1 ]   (273.56,150) -- (273.27,189) ;
\draw [shift={(273.27,189)}, rotate = 90.44] [color={rgb, 255:red, 0; green, 0; blue, 0 }  ,draw opacity=1 ][fill={rgb, 255:red, 0; green, 0; blue, 0 }  ,fill opacity=1 ][line width=0.75]      (0, 0) circle [x radius= 3.35, y radius= 3.35]   ;
\draw [shift={(273.58,148)}, rotate = 90.44] [fill={rgb, 255:red, 0; green, 0; blue, 0 }  ,fill opacity=1 ][line width=0.08]  [draw opacity=0] (12,-3) -- (0,0) -- (12,3) -- cycle    ;
%Straight Lines [id:da18437113410659078] 
\draw [color={rgb, 255:red, 0; green, 0; blue, 0 }  ,draw opacity=1 ][fill={rgb, 255:red, 74; green, 144; blue, 226 }  ,fill opacity=1 ]   (313,188.91) -- (274,188.91) ;
\draw [shift={(274,188.91)}, rotate = 180] [color={rgb, 255:red, 0; green, 0; blue, 0 }  ,draw opacity=1 ][fill={rgb, 255:red, 0; green, 0; blue, 0 }  ,fill opacity=1 ][line width=0.75]      (0, 0) circle [x radius= 3.35, y radius= 3.35]   ;
\draw [shift={(315,188.91)}, rotate = 180] [fill={rgb, 255:red, 0; green, 0; blue, 0 }  ,fill opacity=1 ][line width=0.08]  [draw opacity=0] (12,-3) -- (0,0) -- (12,3) -- cycle    ;
%Straight Lines [id:da18159335931665033] 
\draw [color={rgb, 255:red, 189; green, 16; blue, 224 }  ,draw opacity=1 ][fill={rgb, 255:red, 74; green, 144; blue, 226 }  ,fill opacity=1 ]   (321,192) ;
\draw [shift={(321,192)}, rotate = 0] [color={rgb, 255:red, 189; green, 16; blue, 224 }  ,draw opacity=1 ][fill={rgb, 255:red, 189; green, 16; blue, 224 }  ,fill opacity=1 ][line width=0.75]      (0, 0) circle [x radius= 3.35, y radius= 3.35]   ;
%Straight Lines [id:da27425030392278593] 
\draw [color={rgb, 255:red, 0; green, 0; blue, 0 }  ,draw opacity=1 ][fill={rgb, 255:red, 74; green, 144; blue, 226 }  ,fill opacity=1 ]   (320.56,203) -- (320.27,242) ;
\draw [shift={(320.27,242)}, rotate = 90.44] [color={rgb, 255:red, 0; green, 0; blue, 0 }  ,draw opacity=1 ][fill={rgb, 255:red, 0; green, 0; blue, 0 }  ,fill opacity=1 ][line width=0.75]      (0, 0) circle [x radius= 3.35, y radius= 3.35]   ;
\draw [shift={(320.58,201)}, rotate = 90.44] [fill={rgb, 255:red, 0; green, 0; blue, 0 }  ,fill opacity=1 ][line width=0.08]  [draw opacity=0] (12,-3) -- (0,0) -- (12,3) -- cycle    ;
%Straight Lines [id:da5538174997058338] 
\draw [color={rgb, 255:red, 189; green, 16; blue, 224 }  ,draw opacity=1 ][fill={rgb, 255:red, 74; green, 144; blue, 226 }  ,fill opacity=1 ]   (372,242) ;
\draw [shift={(372,242)}, rotate = 0] [color={rgb, 255:red, 189; green, 16; blue, 224 }  ,draw opacity=1 ][fill={rgb, 255:red, 189; green, 16; blue, 224 }  ,fill opacity=1 ][line width=0.75]      (0, 0) circle [x radius= 3.35, y radius= 3.35]   ;
%Straight Lines [id:da9487501648592407] 
\draw [color={rgb, 255:red, 0; green, 0; blue, 0 }  ,draw opacity=1 ][fill={rgb, 255:red, 74; green, 144; blue, 226 }  ,fill opacity=1 ]   (359,241.91) -- (320,241.91) ;
\draw [shift={(320,241.91)}, rotate = 180] [color={rgb, 255:red, 0; green, 0; blue, 0 }  ,draw opacity=1 ][fill={rgb, 255:red, 0; green, 0; blue, 0 }  ,fill opacity=1 ][line width=0.75]      (0, 0) circle [x radius= 3.35, y radius= 3.35]   ;
\draw [shift={(361,241.91)}, rotate = 180] [fill={rgb, 255:red, 0; green, 0; blue, 0 }  ,fill opacity=1 ][line width=0.08]  [draw opacity=0] (12,-3) -- (0,0) -- (12,3) -- cycle    ;
%Straight Lines [id:da43947256340248986] 
\draw [color={rgb, 255:red, 0; green, 0; blue, 0 }  ,draw opacity=1 ][fill={rgb, 255:red, 74; green, 144; blue, 226 }  ,fill opacity=1 ]   (372.56,251) -- (372.27,290) ;
\draw [shift={(372.27,290)}, rotate = 90.44] [color={rgb, 255:red, 0; green, 0; blue, 0 }  ,draw opacity=1 ][fill={rgb, 255:red, 0; green, 0; blue, 0 }  ,fill opacity=1 ][line width=0.75]      (0, 0) circle [x radius= 3.35, y radius= 3.35]   ;
\draw [shift={(372.58,249)}, rotate = 90.44] [fill={rgb, 255:red, 0; green, 0; blue, 0 }  ,fill opacity=1 ][line width=0.08]  [draw opacity=0] (12,-3) -- (0,0) -- (12,3) -- cycle    ;
%Straight Lines [id:da0649212091209419] 
\draw [color={rgb, 255:red, 189; green, 16; blue, 224 }  ,draw opacity=1 ][fill={rgb, 255:red, 74; green, 144; blue, 226 }  ,fill opacity=1 ]   (424,290) ;
\draw [shift={(424,290)}, rotate = 0] [color={rgb, 255:red, 189; green, 16; blue, 224 }  ,draw opacity=1 ][fill={rgb, 255:red, 189; green, 16; blue, 224 }  ,fill opacity=1 ][line width=0.75]      (0, 0) circle [x radius= 3.35, y radius= 3.35]   ;
%Straight Lines [id:da2438265525064589] 
\draw [color={rgb, 255:red, 0; green, 0; blue, 0 }  ,draw opacity=1 ][fill={rgb, 255:red, 74; green, 144; blue, 226 }  ,fill opacity=1 ]   (412,289.91) -- (373,289.91) ;
\draw [shift={(373,289.91)}, rotate = 180] [color={rgb, 255:red, 0; green, 0; blue, 0 }  ,draw opacity=1 ][fill={rgb, 255:red, 0; green, 0; blue, 0 }  ,fill opacity=1 ][line width=0.75]      (0, 0) circle [x radius= 3.35, y radius= 3.35]   ;
\draw [shift={(414,289.91)}, rotate = 180] [fill={rgb, 255:red, 0; green, 0; blue, 0 }  ,fill opacity=1 ][line width=0.08]  [draw opacity=0] (12,-3) -- (0,0) -- (12,3) -- cycle    ;
%Straight Lines [id:da8248821882706661] 
\draw [color={rgb, 255:red, 0; green, 0; blue, 0 }  ,draw opacity=1 ][fill={rgb, 255:red, 74; green, 144; blue, 226 }  ,fill opacity=1 ]   (424.56,251) -- (424.27,290) ;
\draw [shift={(424.27,290)}, rotate = 90.44] [color={rgb, 255:red, 0; green, 0; blue, 0 }  ,draw opacity=1 ][fill={rgb, 255:red, 0; green, 0; blue, 0 }  ,fill opacity=1 ][line width=0.75]      (0, 0) circle [x radius= 3.35, y radius= 3.35]   ;
\draw [shift={(424.58,249)}, rotate = 90.44] [fill={rgb, 255:red, 0; green, 0; blue, 0 }  ,fill opacity=1 ][line width=0.08]  [draw opacity=0] (12,-3) -- (0,0) -- (12,3) -- cycle    ;

% Text Node
\draw (94,76.4) node [anchor=north west][inner sep=0.75pt]  [font=\scriptsize]  {$( d+1,d-1)$};
% Text Node
\draw (185,100.4) node [anchor=north west][inner sep=0.75pt]  [font=\tiny]  {$d^{(2)}$};
% Text Node
\draw (162,102.4) node [anchor=north west][inner sep=0.75pt]  [font=\tiny]  {$\tilde{v}$};
% Text Node
\draw (227,116.4) node [anchor=north west][inner sep=0.75pt]  [font=\tiny]  {$\dbar$};
% Text Node
\draw (239,146.4) node [anchor=north west][inner sep=0.75pt]  [font=\tiny]  {$d^{(2)}$};
% Text Node
\draw (281,162.4) node [anchor=north west][inner sep=0.75pt]  [font=\tiny]  {$\dbar$};
% Text Node
\draw (284,201.4) node [anchor=north west][inner sep=0.75pt]  [font=\tiny]  {$d^{(2)}$};
% Text Node
\draw (326,217.4) node [anchor=north west][inner sep=0.75pt]  [font=\tiny]  {$\dbar$};
% Text Node
\draw (338,247.4) node [anchor=north west][inner sep=0.75pt]  [font=\tiny]  {$d^{(2)}$};
% Text Node
\draw (380,263.4) node [anchor=north west][inner sep=0.75pt]  [font=\tiny]  {$\dbar$};
% Text Node
\draw (481,278) node [anchor=north west][inner sep=0.75pt]   [align=left] {0};
% Text Node
\draw (420,308) node [anchor=north west][inner sep=0.75pt]   [align=left] {1};
% Text Node
\draw (367,308) node [anchor=north west][inner sep=0.75pt]   [align=left] {2};
% Text Node
\draw (432,263.4) node [anchor=north west][inner sep=0.75pt]  [font=\tiny]  {$\dbar$};
% Text Node
\draw (419,232) node [anchor=north west][inner sep=0.75pt]   [align=left] {0};
\end{tikzpicture}
                \caption{Zig-zag argument}
        \label{fig:Zig-Zag}
    \end{figure}
\end{proof}
\begin{table}[!h]
        \centering
\begin{tabular}{|p{0.12\textwidth}|p{0.12\textwidth}|p{0.12\textwidth}|p{0.12\textwidth}|p{0.12\textwidth}|}
\hline 
  & $\displaystyle \mathrm{ch}_{(1,1,1,1)}(3)$ & $\displaystyle \mathrm{ch}_{(3,1)}(3)$ & $\displaystyle\mathrm{ch}_{(2,2)}(3) $ & $\displaystyle \mathrm{ch}_{(4)}(4)$ \\
\hline 
 $\displaystyle \mathbb{v}_{(1,1,1,1)}( 3)$ & $\displaystyle \neq 0$ & $\displaystyle 0$ & $\displaystyle  0$ & $\displaystyle 0$ \\
\hline 
 $\displaystyle \mathbb{v}_{(3,1)}( 3)$ & $\displaystyle *$ & $\displaystyle \neq 0$ & $\displaystyle 0$ & $\displaystyle 0$ \\
\hline 
 $\displaystyle \mathbb{v}_{(2,2)}( 3)$ & $\displaystyle *$ & $\displaystyle *$ & $\displaystyle \neq 0$ & $\displaystyle 0$ \\
\hline 
 $\displaystyle \mathbb{v}_{(4)}( 3)$ & $\displaystyle *$ & $\displaystyle *$ & $\displaystyle *$ & $\displaystyle \neq 0$ \\
 \hline
\end{tabular}
\caption{Pairing matrix, $d=3$}
        \end{table}
        \begin{table}[!h]
        \centering
\begin{tabular}{|p{0.12\textwidth}|p{0.15\textwidth}|p{0.12\textwidth}|p{0.12\textwidth}|p{0.10\textwidth}|p{0.10\textwidth}|p{0.10\textwidth}|}
\hline 
  & $\displaystyle \mathrm{ch}_{(1,1,1,1,1)}(4)$ & $\displaystyle \mathrm{ch}_{(3,1,1)}(4)$ & $\displaystyle\mathrm{ch}_{(2,2,1)}(4) $ & $\displaystyle \mathrm{ch}_{(4,1)}(4)$ &$\displaystyle \mathrm{ch}_{(3,2)}(4)$& $\displaystyle \mathrm{ch}_{(5)}(4)$ \\
\hline 
 $\displaystyle \mathbb{v}_{(1,1,1,1,1)}( 4)$ & $\displaystyle \neq 0$ & $\displaystyle 0$ & $\displaystyle  0$ & $\displaystyle 0$& $\displaystyle 0$& $\displaystyle 0$ \\
\hline 
 $\displaystyle \mathbb{v}_{(3,1,1)}( 4)$ & $\displaystyle *$ & $\displaystyle \neq 0$ & $\displaystyle 0$ & $\displaystyle 0$ & $\displaystyle 0$& $\displaystyle 0$ \\
\hline 
 $\displaystyle \mathbb{v}_{(2,2,1)}( 4)$ & $\displaystyle *$ & $\displaystyle *$ & $\displaystyle \neq 0$ & $\displaystyle 0$& $\displaystyle 0$& $\displaystyle 0$  \\
\hline 
 $\displaystyle \mathbb{v}_{(4,1)}( 4)$ & $\displaystyle *$ & $\displaystyle *$ & $\displaystyle *$ & $\displaystyle \neq 0$ & $\displaystyle 0$& $\displaystyle 0$ \\
 \hline
 $\displaystyle \mathbb{v}_{(3,2)}( 4)$ & $\displaystyle *$ & $\displaystyle *$ & $\displaystyle *$ & $\displaystyle *$ & $\displaystyle \neq 0$& $\displaystyle 0$ \\
 \hline
  $\displaystyle \mathbb{v}_{(5)}( 4)$ & $\displaystyle *$ & $\displaystyle *$ & $\displaystyle *$ & $\displaystyle *$ & $\displaystyle *$& $\displaystyle \neq 0$ \\
 \hline
\end{tabular}
\caption{Pairing matrix, $d=4$}
\end{table}

For $I \subset \{1,\ldots,d\}$ we introduce the following notation
\begin{align}
  \mathrm{det}(z^{I},\mathsf{Eu}_{I})
& \define \sum\mathrm{sign}(\sigma)({z}^{\sigma(\ii_1)}\sum_{\ii\in I}z^{\ii}\partial_{\ii})\wedge\cdots\wedge ({z}
^{\sigma(\ii_l)}\sum_{\ii\in I}z^{\ii}\partial_{\ii}) \\
& =l!\cdot({z}^{\ii_1}\mathsf{Eu}_I)\wedge\cdots\wedge ({z}^{\ii_l}\mathsf{Eu}_I).
\end{align} 
Here, $\mathsf{Eu}_I$ is the Euler vector field in the directions $I$:
\begin{equation}\label{}
  \mathsf{Eu}_I \define \sum_{\ii \in I} z^{\ii} \del_{\ii} .
\end{equation}

Suppose that $\lambda = (\lambda_1,\ldots, \lambda_k)$, $\lambda_1 \geq \cdots \geq \lambda_k$ is a partition of the
integer $d+1$.
Such a $\lambda$ defines a set partition $\Lambda = (\Lambda_1,\ldots, \Lambda_k)$ of the set $\{0,\ldots,d\}$, with
$\lambda_i = |\Lambda_i|$ by
declaring: 
\begin{align*}
  \Lambda_1 & = \{0,1,\ldots, \lambda_1-1\} \\
  \Lambda_2 & = \{\lambda_1,\ldots, \lambda_1+\lambda_2-1\} \\
  & \vdots \\
  \Lambda_k & = \{\lambda_{k-1}, \ldots, d\} .
\end{align*}
When we write $\lambda$ for a partition of $d+1$ we always denote $\Lambda$ this particular set partition of
$\{0,\ldots,d\}$.
\begin{definition}\label{dfn:partitionv}
  For a partition $\lambda$ of $d+1$ with 
    \[
    \lambda_1 \geq \cdots \geq \lambda_k,\quad \lambda_1 \geq 2,
    \]
    define
    \[
      \mathbb{v}^{(d-1)}_{\lambda}(d) \define P(d) \partial_{\1}\wedge \mathbb{w}_\lambda(d) 
    \]
    where
    \begin{align*}
      \mathbb{w}_\lambda(d) &\define \sum\mathrm{sign}(\sigma)(z^{\1}{z}^{\sigma(\ii_1)}\mathsf{Eu}_{\Lambda_1-\{0\}}\wedge\cdots\wedge ({z}^{\sigma(\ii_l)}\mathsf{Eu}_{\Lambda_1-\{0\}})
                         \\ & \wedge \mathrm{det}(z^{\Lambda_2},\mathsf{Eu}_{\Lambda_2})\wedge\cdots \wedge\mathrm{det}(z^{\Lambda_{k-1}},\mathsf{Eu}_{\Lambda_{k-1}})\wedge\mathrm{det}(z^{\Lambda_{k}},\mathsf{Eu}_{\Lambda_{k}}).
    \end{align*}
    If $\lambda=(1,\dots,1)$, we define
  \begin{align*}
    \mathbb{v}^{(d-1)}_{(1,\dots,1)}(d) & \define P(d)\partial_{\1}\wedge \mathbb{w}_{(1,\dots,1)} (d) \\ & \; = P(d)
    \del_{\1} \wedge (z^{\1})^{3}\partial_{\1}\wedge (z^{\2})^{2}\partial_{\2}\wedge\cdots\wedge (z^{\dd})^{2}
    \partial_{\dd} .
  \end{align*}
\end{definition}

\begin{lemma}\label{lem:wlambda}
  For $d\geq2$ and $\lambda$ a partition of $\{1,\ldots,d\}$ one has:
    \begin{itemize}
      \item[(a)] The $\lie{w}_d$-chain $\mathbb{w}_{\lambda}(d)$ is $\bd^{(2)}$-closed.
      \item[(b)] There exists a $\lie{witt}_d$-chain $\mathbb{u}$ of total degree one such that $\bd^{(2)} \mathbb{v}
        ^{(d-1)}_{\lambda}(d) = \dbar
      \mathbb{u}$.
    \end{itemize}
\end{lemma}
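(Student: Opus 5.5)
Part (a) is a statement about the Lie algebra $\lie{w}_d^{poly}$ of polynomial vector fields, and part (b) is a $\dbar$-cohomology vanishing in the Jouanolou model. I would treat them separately.

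\emph{Part (a).} The key observation is that for a fixed block $I$ the vector fields $z^{\ii}\mathsf{Eu}_I$, $\ii\in I$, pairwise commute up to elements of the same span. Indeed, since $\mathsf{Eu}_I z^{\jj}=z^{\jj}$ for $\jj\in I$,
\[
[z^{\ii}\mathsf{Eu}_I, z^{\jj}\mathsf{Eu}_I] = z^{\ii}z^{\jj}\mathsf{Eu}_I - z^{\jj}z^{\ii}\mathsf{Eu}_I = 0 .
\]
Vector fields supported on disjoint blocks also commute, because $\mathsf{Eu}_{\Lambda_a}$ does not involve the variables of $\Lambda_b$ for $a\neq b$. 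Hence each $\det(z^{\Lambda_a},\mathsf{Eu}_{\Lambda_a})$ is a wedge of pairwise commuting elements, and so is the whole product. The Chevalley--Eilenberg boundary $\bd^{(2)}$ only involves pairwise brackets of the wedge factors, so it vanishes on $\mathbb{w}_\lambda(d)$.

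The first block carries the extra factor $z^{\1}$ on one entry, and this needs a direct check. For $\ii,\jj\in\Lambda_1-\{0\}$ we compute
\[
[z^{\1}z^{\ii}\mathsf{Eu}, z^{\jj}\mathsf{Eu}] = z^{\1}z^{\ii}z^{\jj}\mathsf{Eu} - z^{\jj}\,\mathsf{Eu}(z^{\1}z^{\ii})\,\mathsf{Eu}.
\]
Its coefficient is $z^{\1}z^{\ii}z^{\jj}(1-2)$ or similar, depending on whether $\1\in\Lambda_1$. This term is symmetric in the relevant indices, so after antisymmetrization over $\sigma$ it cancels. The case $\lambda=(1,\ldots,1)$ is immediate: the factors $(z^{\ii})^2\partial_{\ii}$ for distinct $\ii$ commute, and $(z^{\1})^3\partial_{\1}$ appears alone in its direction.

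\emph{Part (b).} Expanding $\bd^{(2)}(P\partial_{\1}\wedge\mathbb{w})$, the brackets inside $\mathbb{w}$ vanish by (a). We are left with
\[
\sum_j \pm [P\partial_{\1}, w_j]\wedge \mathbb{w}_{\hat j},
\]
where $\mathbb{w}_{\hat j}$ is $\mathbb{w}$ with the factor $w_j$ removed. Each term has internal degree $d-1$, so it suffices to show that its $\dbar$-cohomology class vanishes. By Proposition \ref{prop:JouanolouWitt}, $H^{d-1}(\lie{witt}_d)\cong \op{S}^\bu(\lie{q})\otimes\det(\lie{q})\otimes\lie{q}$, and the classes are detected by residue pairings against polynomials. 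Using flatness (Proposition \ref{prop:flatness}) and the fact that the remaining factors $\mathbb{w}_{\hat j}$ lie in $\lie{w}_d^{poly}\subset H^0$, it suffices to compute the $H^{d-1}$-valued class in $H^{d-1}(\lie{witt}_d)\otimes\wedge^{d-1}\lie{w}_d^{poly}$.

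I would compute $[P\partial_{\1},w_j] = P\,\partial_{\1}(g_j)\,\partial_{\bullet} - w_j(P)\,\partial_{\1}$ in cohomology. The element $w_j(P)$ is computed through the residue/$\mathsf{L}$-action: $P$ is a density, so $w_j(P)$ is cohomologous to $-\op{div}(w_j)P$ plus exact terms. Since $\mathbb{w}$ consists of polynomial vector fields, all resulting classes are of the form $[z^{\bk}P]$, which is zero in $H^{d-1}$ unless $\bk=0$; indeed $z^{\bk}P$ is $\dbar$-exact for $\bk\neq0$, as $H^{d-1}$ is spanned by $[x^{\bk}P]$. So only terms where the coefficients reduce to constants survive.

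\emph{Main obstacle.} The hard step is showing that these surviving constant terms cancel. This is a weight count. Each $w_j$ has polynomial degree $\geq 2$, and $\partial_{\1}$ lowers the degree by one, so $\partial_{\1}g_j$ and $\op{div}(w_j)$ have positive degree except in special cases. The first-block factor $z^{\1}z^{\ii}\mathsf{Eu}$ and $(z^{\1})^3\partial_{\1}$ are the delicate cases, where $\partial_{\1}$ can produce lower-degree terms. There I expect to need the antisymmetrization together with the identity $z^{\ii}\mathsf{Eu}$-exactness to cancel the leftover terms, and this is the step I would check by hand first in the $d=2$ case against the explicit formulas above.
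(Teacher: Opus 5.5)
Your part (a) is correct, and it is more careful than the paper's proof. The paper asserts that every wedge factor has the form $z^{\jj}\mathsf{Eu}_I$, which overlooks the extra $z^{\1}$ in the first block. Since $\1\in\Lambda_1-\{0\}$ always (as $\lambda_1\geq 2$), one has exactly $[z^{\1}z^{\ii}\mathsf{Eu}_I,z^{\jj}\mathsf{Eu}_I]=-z^{\1}z^{\ii}z^{\jj}\mathsf{Eu}_I$. Your observation that this is symmetric in $\ii,\jj$, and so cancels in the signed sum over $\sigma$, is what is needed.

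Your reduction in (b) is also the paper's. Because the remaining factors are $\dbar$-closed, it suffices that each single bracket $[P\del_{\1},w_j]$ be $\dbar$-exact. One can then take $\mathbb{u}=\sum_j\pm u_j\wedge\mathbb{w}_{\hat j}$ with $\dbar u_j=[P\del_{\1},w_j]$; flatness plays no role here.

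The gap is that you stop before the conclusion and defer a ``main obstacle'' that does not exist. Every wedge factor of $\mathbb{w}_\lambda(d)$ has coefficients homogeneous of degree at least $2$. The factors $z^{\ii}\mathsf{Eu}_I$ and $(z^{\kkk})^2\del_{\kkk}$ have quadratic coefficients, while $z^{\1}z^{\ii}\mathsf{Eu}_I$ and $(z^{\1})^3\del_{\1}$ have cubic ones. For $g$ homogeneous of degree $m\geq 2$,
\[
  [P\del_{\1},g\,\del_{\kkk}]=(\del_{\1}g)\,P\,\del_{\kkk}-g\,(\del_{\kkk}P)\,\del_{\1}.
\]
Use the weight grading in which $z$ has weight $1$ and $x$, $\dbar x$ have weight $-1$. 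Then $P$ has weight $-d$ and $\del_{\kkk}$ lowers weight by one, so both coefficients have weight $m-1-d\geq -d+1$. Since $\dbar$ preserves weight and $H^{d-1}(\cJ_d)$ is spanned by the classes $[x^{\mathbf{k}}P]$ of weight $-d-|\mathbf{k}|\leq -d$, each term is exact by itself.

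In particular, no constant coefficient ever arises: the ``delicate'' cubic factors still have quadratic coefficients after applying $\del_{\1}$. So no cancellation, antisymmetrization, or hand check for $d=2$ is required. This termwise weight count is exactly the paper's argument, phrased there through $[P\del_{\1},f\mathsf{Eu}_I]$ with $\deg f\geq 1$.

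Your detour via $w_j(P)\sim-\op{div}(w_j)P$ is likewise unnecessary. It is only valid here because the $w_j$ vanish at the origin; it fails for $\del_{\kkk}$.
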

\begin{proof}
  For part (a) we use the following elementary bracket identities
  \[
    [z^{\jj}\mathsf{Eu}_I,z^{\kkk}\mathsf{Eu}_I]=0 ,
  \]
  where $\jj,\kkk \in I$, and
  \[
    [z^\jj\mathsf{Eu}_I,z^\kkk\mathsf{Eu}_J]=0
  \]
  where $I\cap J=\varnothing$, $\jj\in I,\ \kkk\in J.$
  Every wedge factor appearing in $\mathbb{w}_\lambda(d)$ is of the form $z^{\jj}\mathsf{Eu}_I$ for one of the blocks $I=\Lambda_a$. 
  Thus any two wedge factors either belong to the same block, in which case the first identity applies, or to different blocks, in which case the second identity applies.

  Part (b). 
  Since $\mathbb{w}_\lambda (d)$ is $\bd^{(2)}$-closed, the only contributions to $\bd^{(2)}\mathbb{v}_\lambda^{(d-1)} (d)
  $ come from brackets of $P(d) \del_{\1}$ with the individual wedge factors of $\mathbb{w}_\lambda (d)$.
  Referring to the definition of $\op{det}(z^I, \mathsf{Eu}_I)$, it suffices to show the following type
  of bracket in $\lie{witt}_d$:
  \begin{equation}\label{}
    [P(d) \del_{\1} , f \mathsf{Eu}_I] 
  \end{equation}
  is $\dbar$ exact, where $f$ is an arbitrary homogeneous polynomial in $z$ which is at least linear.
  
  Using
  \[
    \mathsf{Eu}_I(P)=-d\,\tau_I P,
    \qquad
    \tau_I\define \sum_{\ii\in I}z^\ii x^{\ii},
  \]
  we compute
  \begin{equation}\label{eq:P-Eu-bracket-expanded}
    [P\del_\1,f\mathsf{Eu}_I]
    =
    P(\del_\1 f)\mathsf{Eu}_I
    +
    \mathbf 1_{\1\in I} fP\del_\1
    +
    d f\tau_I P\del_\1 .
  \end{equation}
  We argue that each term on the right-hand side is $\dbar$-exact. 
  The first and second terms are exact because their coefficients are holomorphic (depend only on $z$) positive-degree
multiples of $P$. The second term is exact for the same reason. 
For the third term, if $f=f(z)$ has degree $m>0$, then $f\tau_I P$ has Euler weight
$m-d$. 
Since $m \geq 1$, we see that $f\tau_I P$ is also $\dbar$-exact.
\end{proof}

By lemmas \ref{lem:descent} and \ref{lem:wlambda}, each class $\mathbb{v}^{(d-1)}_\lambda(d)$ lifts to a genuine cycle
\[
  \mathbb{v}_\lambda(d)
  \in C^{Lie}_\bullet(\lie{witt}_d)
\]
whose leading term is $\mathbb{v}^{(d-1)}_\lambda(d)$. 

\paragraph
Let
\[
  \op{Part}(\{0,\ldots,d\})^{-}
\]
denote the set of partitions of $\{0,\ldots,d\}$, ordered by block sizes, with
the partition of type $(2,1,\ldots,1)$ omitted. We order this set as follows.
First, we order partitions by increasing number of nonsingleton blocks, and
within each such stratum we use lexicographic order on the nonincreasing
sequence of block sizes. Equivalently, the order is chosen so that the examples
for $d=3$ and $d=4$ begin
\[
  (1,1,1,1)<(3,1)<(2,2)<(4)
\]
and
\[
  (1,1,1,1,1)<(3,1,1)<(2,2,1)<(4,1)<(3,2)<(5).
\]

\begin{proposition}\label{prop:upper-triangular-pairing}
  The pairing matrix
  \[
    \left(
    \big\langle \op{ch}_{\mu}(d),
    \til{\mathbb{v}}_{\lambda}(d)
    \big\rangle
    \right)_{\lambda,\mu\in \op{Part}(\{0,\ldots,d\})^{-}}
  \]
  is upper triangular with nonzero diagonal entries. More explicitly,
  \[
    \big\langle \op{ch}_{\lambda}(d),
    \til{\mathbb{v}}_{\lambda}(d)
    \big\rangle\neq 0
  \]
  and, if $\lambda<\nu$, then
  \[
    \big\langle \op{ch}_{\lambda}(d),
    \til{\mathbb{v}}_{\nu}(d)
    \big\rangle=0 .
  \]
\end{proposition}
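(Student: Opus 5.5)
The plan is to reduce the pairing to a computation on the leading term $\mathbb{v}^{(d-1)}_\lambda(d)=P(d)\partial_{\1}\wedge\mathbb{w}_\lambda(d)$, and then to a combinatorial statement about Jacobian matrices.

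\textbf{Step 1 (only the leading term matters).} The cocycle $\op{ch}_\mu(d)=\rho(\cdots)$ is a $(d+1)$-ary expression in Jacobians. The residue $\op{Res}$ only sees Jouanolou degree $d-1$. By construction of $\rho$, every input except $f_0$ enters through a $\partial$-derivative, so the Jouanolou degree must be carried by the $P$-factor. Hence, just as in the $d=2$ discussion, the correction terms $\mathbb{v}^{(j)}$ with $j<d-1$ pair trivially with every $\op{ch}_\mu$ by type reasons. Thus
\[
\langle \op{ch}_\mu(d),\til{\mathbb{v}}_\lambda(d)\rangle=\langle\op{ch}_\mu(d),\mathbb{v}^{(d-1)}_\lambda(d)\rangle .
\]

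\textbf{Step 2 (block structure of the Jacobians).} Each wedge factor of $\mathbb{w}_\lambda(d)$ has the form $z^{\jj}\mathsf{Eu}_I$ with $I=\Lambda_a$ a block, apart from the modified first block. Its Jacobian $J(z^{\jj}\mathsf{Eu}_I)$ is supported in the rows and columns indexed by $I$, up to the column $\jj$. Expand $\op{ch}_\mu$ via $\lie{c}_{m}$ and the $*$-product. Each factor $\lie{c}_m$ evaluated on a group of wedge factors produces a cyclic trace $\op{Tr}(JT_{0}\cdots JT_{m-1})$ over $m$ of the inputs. Such a trace vanishes unless the $T$'s in that group are supported on a common block or are linked through the $P\partial_{\1}$ factor, whose Jacobian is nonzero only in column $\1$. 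This yields the following claim. The pairing $\langle\op{ch}_\mu,\mathbb{v}_\nu\rangle$ is nonzero only if the set partition $M$ induced by the trace factors is \emph{coarsened} by $N$. That is, each cycle of $\mu$ must lie inside a block of $\nu$, with the block containing $0$ and $\1$ treated specially.

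\textbf{Step 3 (order compatibility).} I would then check that ``$\mu$ is a refinement of $\nu$'' implies $\mu\le\nu$ in the chosen order. Refinement can only decrease the number of nonsingleton blocks, or keep it while lexicographically decreasing the sequence of block sizes. This gives the vanishing $\langle\op{ch}_\lambda,\til{\mathbb{v}}_\nu\rangle=0$ for $\lambda<\nu$. One point needs care: the omitted type $(2,1,\ldots,1)$, together with the special role of $\Lambda_1\ni 0$, must not produce spurious contributions. The extra factor $z^{\1}$ in the first block and the $(z^{\1})^3$ in the $(1,\ldots,1)$ case are designed to handle this.

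\textbf{Step 4 (diagonal nonvanishing).} For $\mu=\lambda$, only the term where each trace factor exhausts exactly one block survives. The pairing then factorizes as a product over blocks. Each block $\Lambda_a$ with $a\ge 2$ contributes $\op{Tr}$ of a product of the matrices $J(z^{\ii}\mathsf{Eu}_{\Lambda_a})$, antisymmetrized over $\sigma$. This is a determinant-type expression, and I would evaluate it by a Vandermonde-like computation to get a nonzero monomial times a constant. The block containing $0$ contributes $\op{Res}\big(P\,\partial(\cdots)\big)$. This reduces to $\op{Res}\big(P\,(z^{\1}\cdots z^{\dd})\,d^dz\big)$-type pairings, which are nonzero by the basis description of $H^{d-1}(\cJ_d)$ together with the normalization of $\op{Res}$.

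The main obstacle is Step 2. One has to track signs and the $*$-product (shuffle plus $B$) carefully enough to be sure that cross-block traces vanish and that no cancellation kills the diagonal constant. I would first verify the claim on the explicit tables for $d=3,4$ to calibrate the argument before writing the general argument.
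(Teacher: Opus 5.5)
You follow the paper's own route. Reduce to the leading term $P(d)\partial_{\1}\wedge\mathbb{w}_\nu(d)$, use the block support of the Jacobians to see that each trace factor must live inside a block of $\nu$, compare this refinement condition with the order, and evaluate the diagonal. Step 3 does not work as written, however, and the paper's proof has the same two defects.

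First, it is false that refinement can only decrease the number of nonsingleton blocks: $(2,2)$ refines $(4)$. So for the order as described in words (nonsingleton count first, then lexicographic) you have $(4)<(2,2)$, and refinement does not imply $\le$. What works is to order primarily by number of blocks, with more blocks first. This is the order of the paper's displayed examples, $(1,1,1,1)<(3,1)<(2,2)<(4)$, and it extends the refinement order because a strict refinement strictly increases the number of blocks.

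Second, the direction is inverted. From ``$\langle\op{ch}_\mu,\mathbb{v}_\nu\rangle\neq0$ only if $\mu$ refines $\nu$'' and ``refinement implies $\le$'' you get vanishing whenever $\lambda\not\le\nu$, in particular for $\lambda>\nu$. You do not get vanishing for $\lambda<\nu$. The literal $\lambda<\nu$ statement is in fact false. The paper's $d=2$ table gives $\langle\op{ch}_{(1,1,1)},\til{\mathbb{v}}_{(3)}(2)\rangle\neq0$ although $(1,1,1)<(3)$, and the coarsest chain $\mathbb{v}_{(d+1)}$ can pair with every class. The transposed triangularity is exactly what the proof of Theorem \ref{thm:cwd-injective} uses, since it discards the terms with $\lambda>\lambda_0$. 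That is the version you should prove.

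There are also smaller points.
\begin{itemize}
\item In Step 1 the correction terms drop out for an arity reason: $\op{ch}_\mu$ is $(d+1)$-linear, while $\mathbb{v}^{(j)}$ has only $j+2$ wedge factors. Where the Jouanolou degree sits is not the point.
\item In Step 4, $\op{Res}(P\,z^{\1}\cdots z^{\dd}\,d^dz)=0$ by torus weight. What actually survives are terms such as $\op{Res}(z^{\1}\partial_{\1}P\,d^dz)=-\op{Res}(P\,d^dz)$, and you still have to check that the signed sum coming from $\rho$ and the $*$-product does not cancel. Neither you nor the paper does this.
\item By the general formula, $\mathbb{v}_{(1,\ldots,1)}$ coincides with the chain the formula would assign to $(2,1,\ldots,1)$. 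Its blocks are $\{0,\1\},\{\2\},\ldots,\{\dd\}$, which is why the omitted class is harmless: within $\op{Part}(\{0,\ldots,d\})^{-}$, only $(1,\ldots,1)$ refines this block type. It also means the diagonal entry $\langle\op{ch}_1^{d+1},\mathbb{v}_{(1,\ldots,1)}\rangle$ is not of the ``one trace factor per block'' form and needs its own direct computation.
\end{itemize}
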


\begin{proof}
  By lemmas \ref{lem:descent} and \ref{lem:wlambda}, we have shown that for each partition $\nu$ there is a cycle of the form
  \[
    \til{\mathbb{v}}_\nu(d)
    =
    \mathbb{v}^{(d-1)}_\nu(d)+\text{lower Jouanolou-degree terms}
  \]
  Importantly, the lower Jouanolou-degree terms do not contribute to the pairing with the Chern--Weil cocycles
  $\op{ch}_\lambda(d)$, for degree reasons. Hence it is enough to compute the
  pairing on the leading terms $\mathbb{v}^{(d-1)}_\nu(d)$.

  The leading term is
  \[
    \mathbb{v}^{(d-1)}_\nu(d)
    =
    P(d)\del_\1\wedge \mathbb{w}_\nu(d).
  \]
  Pairing the cocycle
  \[
    \op{ch}_\lambda
    =
    \op{ch}_1^{i_1}\cdots \op{ch}_d^{i_d}
  \]
  with this leading term amounts to distributing the trace factors among the wedge
  factors of $\mathbb{w}_\nu(d)$. 
  Each trace factor of length $r$ can be
  nonzero only if it is supported on a block of $\nu$ of size at least $r$.
  Moreover, the determinant expression
  \[
    \det(z^I,\mathsf{Eu}_I)
  \]
  forces the length-$|I|$ trace to use precisely the variables in the block
  $I$; if fewer or different variables are used, the alternating determinant
  gives zero.

  It follows that the pairing can be nonzero only when the block sizes required
  by $\lambda$ can be matched with the block sizes occurring in $\nu$. With the
  chosen order, this implies
  \[
    \lambda\leq \nu .
  \]
  Therefore
  \[
    \big\langle \op{ch}_{\lambda}(d),
    \mathbb{v}^{(d-1)}_{\nu}(d)
    \big\rangle=0
    \qquad \text{if } \lambda<\nu .
  \]

  On the diagonal, the matching is unique up to permutation of equal-sized
  blocks. In that case each determinant block is paired with the corresponding
  trace factor of the same length. The determinant antisymmetrization cancels
  the antisymmetrization in the trace and leaves a nonzero scalar multiple of
  \[
    \op{Res}\big(P(d)\,\d z^\1\cdots \d z^\dd\big) = 1
  \]
  Hence
  \[
    \big\langle \op{ch}_{\lambda}(d),
    \mathbb{v}^{(d-1)}_{\lambda}(d)
    \big\rangle\neq 0 .
  \]
  Since the correction terms do not affect the pairing, the same upper
  triangularity and nonvanishing diagonal hold for the cycles
  $\til{\mathbb{v}}_\lambda(d)$.
\end{proof}

\begin{theorem}\label{thm:cwd-injective}
  For every $d\geq 1$, the universal Chern--Weil homomorphism
  \[
    \mathsf{cw}_d\colon
    \C[\op{ch}_1,\ldots,\op{ch}_d]_{d+1}
    \to
    \HH^2_{\op{Lie}}(\lie{witt}_d)
  \]
  is injective.
\end{theorem}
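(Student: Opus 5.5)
The plan is to deduce injectivity from Proposition \ref{prop:upper-triangular-pairing} by a linear algebra argument, after first handling the basis change between monomials and partition labels. The case $d=1$ is classical, as remarked above, so I will assume $d\geq 2$.

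First I fix a basis of the domain. The weight $d+1$ part of $\C[\op{ch}_1,\ldots,\op{ch}_d]$ has basis the monomials $\op{ch}_\lambda$, where $\lambda$ runs over partitions of $d+1$ other than the one-block partition $(d+1)$. The set $\op{Part}(\{0,\ldots,d\})^{-}$ used in the proposition has the same cardinality $p(d+1)-1$, but it is indexed differently: it omits $(2,1,\ldots,1)$ and includes $(d+1)$. I compare the two using Lemma \ref{lem:chd+1}. In the exponential formula there, the coefficient of $\op{ch}_1^{d-1}\op{ch}_2$ in $\op{ch}_{d+1}$ is nonzero. Explicitly, from $\exp(\sum_k(-1)^{k-1}(k-1)!\op{ch}_kt^k)$, the term $\op{ch}_2$ enters with coefficient $-1$ and $\op{ch}_1^{d-1}$ with $1/(d-1)!$, so the total coefficient is $\pm 1/(d-1)!\cdot(-1)^{d+1}/d!\neq 0$. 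It follows that the classes
\[
\{\mathsf{cw}_d(y_\lambda)\}_{\lambda\neq(d+1),(2,1,\ldots,1)}\cup\{\op{ch}_{d+1}\}
\]
span the same subspace as the image of the full monomial basis. Moreover, the preimage of $\op{ch}_{d+1}$ under $\mathsf{cw}_d$ is a polynomial whose $y_1^{d-1}y_2$ coefficient is nonzero. So the family $\{y_\lambda\}_{\lambda\neq(d+1),(2,1,\ldots,1)}$ together with this preimage is again a basis of the domain, indexed exactly by $\op{Part}(\{0,\ldots,d\})^{-}$. The labels $\op{ch}_\mu(d)$ in the proposition are precisely the images of this basis.

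Next I pair against cycles. Lemmas \ref{lem:descent} and \ref{lem:wlambda} produce genuine Chevalley--Eilenberg cycles $\til{\mathbb{v}}_\nu(d)$, and pairing with a cycle descends to cohomology. Suppose $\sum_\mu a_\mu\op{ch}_\mu(d)=0$ in $\HH^2_{Lie}(\lie{witt}_d)$. Pairing with each $\til{\mathbb{v}}_\nu(d)$ gives $\sum_\mu a_\mu\langle\op{ch}_\mu,\til{\mathbb{v}}_\nu\rangle=0$ for every $\nu$. By Proposition \ref{prop:upper-triangular-pairing}, $\langle\op{ch}_\mu,\til{\mathbb{v}}_\nu\rangle=0$ whenever $\mu<\nu$, and the diagonal entries are nonzero. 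I then induct downward from the largest element $\nu=(d+1)$. For that $\nu$, only $\mu=(d+1)$ contributes, so $a_{(d+1)}=0$. Proceeding downward in the chosen total order, every $\mu<\nu$ drops out because its pairing vanishes, every $\mu>\nu$ has already been shown to have $a_\mu=0$, and so $a_\nu=0$. Hence all $a_\mu$ vanish, and since the $\op{ch}_\mu(d)$ are images of a basis, $\mathsf{cw}_d$ is injective.

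The main obstacle is the bookkeeping in the first step: checking that the change of basis through $\op{ch}_{d+1}$ is invertible, that is, that exactly the $(2,1,\ldots,1)$ monomial is the one being traded away. This relies on a careful reading of the nonzero coefficient in the Procesi--Razmyslov expansion of Lemma \ref{lem:chd+1}. The substantive analytic content, namely the triangularity of the pairing, is already supplied by the proposition.
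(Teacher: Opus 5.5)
Your overall route is the paper's: trade the $(2,1,\ldots,1)$ monomial for $\op{ch}_{d+1}$ using Lemma~\ref{lem:chd+1}, pair a putative relation with the descent cycles $\til{\mathbb{v}}_\nu(d)$, and eliminate coefficients triangularly. Your basis-change bookkeeping is correct and more explicit than the paper's: the coefficient of $\op{ch}_1^{d-1}\op{ch}_2$ in the expansion of $\op{ch}_{d+1}$ is $(-1)^d/(d!\,(d-1)!)\neq 0$.

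The gap is the direction of the elimination. You use the vanishing exactly as printed in Proposition~\ref{prop:upper-triangular-pairing}, namely $\langle\op{ch}_\mu,\til{\mathbb{v}}_\nu\rangle=0$ for $\mu<\nu$. That inequality is reversed relative to what the proposition's proof establishes and what the paper's proof of the theorem actually uses. The proof shows that a trace factor of length $r$ must be supported on a block of $\nu$ of size at least $r$. It concludes that the pairing can be nonzero only when $\mu\leq\nu$ in the chosen order, so the vanishing holds for $\mu>\nu$. The pairing tables show the same pattern: with rows $\mathbb{v}_\nu$ and columns $\op{ch}_\mu$, the zeros sit above the diagonal. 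In particular, Table~\ref{tbl:pairing2} records $\langle\op{ch}_{(1,1,1)},\mathbb{v}_{(3)}(2)\rangle\neq 0$ even though $(1,1,1)<(3)$.

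Consequently your very first step fails. Nothing forces $\langle\op{ch}_\mu,\til{\mathbb{v}}_{(d+1)}(d)\rangle$ to vanish for $\mu<(d+1)$: every partition refines $(d+1)$, and for $d=2$ the entry with $\mu=(1,1,1)$ is nonzero. So pairing the relation with $\til{\mathbb{v}}_{(d+1)}(d)$ does not isolate $a_{(d+1)}$, and the downward induction never starts.

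The repair is to run the elimination from the bottom, as the paper does. Let $\lambda_0$ be minimal with $a_{\lambda_0}\neq 0$ and pair with $\til{\mathbb{v}}_{\lambda_0}(d)$. Terms with $\mu>\lambda_0$ vanish by triangularity, and terms with $\mu<\lambda_0$ have zero coefficient by minimality, which forces $a_{\lambda_0}=0$. Equivalently, induct upward starting from $(1,\ldots,1)$, whose cycle $\til{\mathbb{v}}_{(1,\ldots,1)}(d)$ is annihilated by every $\op{ch}_\mu$ with $\mu>(1,\ldots,1)$. With this change your argument coincides with the paper's.
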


\begin{proof}
  The case $d=1$ is classical, so assume $d\geq 2$.

  By Lemma \ref{lem:chd+1}, the class $\op{ch}_{d+1}$ lies in the span of the
  Chern--Weil classes
  \[
    \op{ch}_1^{i_1}\cdots \op{ch}_d^{i_d},
    \qquad
    \sum_{p=1}^d p\,i_p=d+1.
  \]
  Equivalently, we may replace the basis element corresponding to the partition
  $(2,1,\ldots,1)$ by the class $\op{ch}_{d+1}=\op{ch}_{(d+1)}$. Thus it is
  enough to prove linear independence of the classes
  \[
    \{\op{ch}_{\lambda}(d)\}_{\lambda\in
    \op{Part}(\{0,\ldots,d\})^{-}} .
  \]

  Suppose that there is a linear relation
  \[
    \sum_{\lambda\in \op{Part}(\{0,\ldots,d\})^{-}}
    a_\lambda\,\op{ch}_{\lambda}(d)=0
    \qquad
    \text{in }
    \HH^2_{\op{Lie}}(\lie{witt}_d).
  \]
  Pair this relation with the cycles
  $\til{\mathbb{v}}_\nu(d)$, ordered as in proposition
  \ref{prop:upper-triangular-pairing}. Let $\lambda_0$ be the smallest
  partition for which $a_{\lambda_0}\neq 0$. Pairing with
  $\til{\mathbb{v}}_{\lambda_0}(d)$ gives
  \[
    0
    =
    \sum_\lambda a_\lambda
    \big\langle
      \op{ch}_\lambda(d),
      \til{\mathbb{v}}_{\lambda_0}(d)
    \big\rangle .
  \]
  By upper triangularity, all terms with $\lambda>\lambda_0$ vanish, and by
  minimality all terms with $\lambda<\lambda_0$ have coefficient zero. Hence
  \[
    0
    =
    a_{\lambda_0}
    \big\langle
      \op{ch}_{\lambda_0}(d),
      \til{\mathbb{v}}_{\lambda_0}(d)
    \big\rangle .
  \]
  The diagonal pairing is nonzero, so $a_{\lambda_0}=0$, a contradiction.
  Therefore all $a_\lambda$ vanish, and the Chern--Weil classes are linearly
  independent.

  Hence $\mathsf{cw}_d$ is injective.
\end{proof}

\subsection{Injectivity for the $\lie{gl}_r$-extended Chern--Weil homomorphism}

Fix $0 \leq k \leq d$.  Let
\[
  \nu=(1^{i_1},2^{i_2},\ldots,(d-k)^{i_{d-k}})
\]
be a partition of $d-k$, and let $\kappa=(k_1,\ldots,k_r)$ satisfy
\[
  k_1+2k_2+\cdots+rk_r=k+1 .
\]
We write
\begin{equation}\label{eq:ch-nu-kappa}
  \op{ch}_{\nu;\kappa}(d,k)
  \define
  \op{ch}_1^{i_1}\cdots \op{ch}_{d-k}^{i_{d-k}}
  \lie{t}_1^{k_1}\cdots \lie{t}_r^{k_r}.
\end{equation}

Let
\[
  \theta_\kappa(A)
  \define
  \op{Tr}(A)^{k_1}\op{Tr}(A^2)^{k_2}\cdots\op{Tr}(A^r)^{k_r}.
\]
The polynomials $\theta_\kappa$, with $\sum_j j k_j=k+1$, form a basis of
$(S^{k+1}\lie{gl}_r^*)^{\lie{gl}_r}$.  Choose the dual basis
\[
  \mathbb{E}_\kappa \in (S^{k+1}\lie{gl}_r)^{\lie{gl}_r},
  \qquad
  \langle \theta_\kappa,\mathbb{E}_{\kappa'}\rangle=\delta_{\kappa,\kappa'}.
\]
Write
\[
  \mathbb{E}_\kappa
  =
  \sum_{\ell}
  A^{\kappa,\ell}_0\odot A^{\kappa,\ell}_1\odot\cdots\odot A^{\kappa,\ell}_k .
\]

\begin{definition}\label{dfn:partitionvv}
Let $\lambda=(\lambda_1,\ldots,\lambda_{d-k})$ be a partition of $d-k$, written with
$\lambda_1\geq\cdots\geq\lambda_{d-k}$.  Define the chain
\[
\begin{aligned}
\mathbb{v}_{\lambda,\kappa}(d)
\define
\sum_{\tau\in S_{k+1}}\sum_{\ell}
&
P(d)\,
A^{\kappa,\ell}_{\tau(0)}
\wedge z^{\1}A^{\kappa,\ell}_{\tau(1)}
\wedge\cdots\wedge
z^{\kkk}A^{\kappa,\ell}_{\tau(k)}
\\
&\wedge
\det(z^{\Lambda_1},\mathsf{Eu}_{\Lambda_1})
\wedge\cdots\wedge
\det(z^{\Lambda_{d-k}},\mathsf{Eu}_{\Lambda_{d-k}}).
\end{aligned}
\]
\end{definition}
\begin{lemma}\label{lem:wlambdaMixed}
For $d\geq 2$, the chain $\mathbb{v}_{\lambda,\kappa}(d)$ satisfies
\[
  \bd^{(2)}\mathbb{v}_{\lambda,\kappa}(d)
  =
  \dbar \mathbb{u}
\]
for some $\lie{witt}_d\ltimes \lie{gl}_r(\cJ_d)$-chain $\mathbb{u}$ of total degree one.
\end{lemma}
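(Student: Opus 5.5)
The plan is to mimic the proof of lemma \ref{lem:wlambda}: first show that the ``polynomial part'' of $\mathbb{v}_{\lambda,\kappa}(d)$ (everything except the factor carrying $P(d)$) is $\bd^{(2)}$-closed. Then show that every bracket involving the $P(d)$-factor is $\dbar$-exact. Since $\bd^{(2)}$ is a coderivation determined by pairwise brackets, $\bd^{(2)}\mathbb{v}_{\lambda,\kappa}(d)$ is a sum over pairs of wedge factors. There are three kinds of pairs: (i) pairs of vector fields $z^{\jj}\mathsf{Eu}_{\Lambda_a}$, (ii) mixed pairs of a vector field with a current $z^{\ii}A$ or with $P(d)A$, and (iii) pairs of currents. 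We treat each kind in turn.

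For pairs (i) we use the bracket identities from part (a) of lemma \ref{lem:wlambda}. Because the blocks $\Lambda_a$ are disjoint, or because the factors lie in a common block, these brackets vanish identically. For pairs of currents $z^{\ii}A\wedge z^{\jj}B$ with $\ii,\jj\ge 1$, the bracket is $z^{\ii}z^{\jj}[A,B]$. It lies in $\lie{gl}_r\otimes\C[z]$, but it need not vanish. Here we use the symmetrization $\sum_{\tau\in S_{k+1}}$ and the $\lie{gl}_r$-invariance of $\mathbb{E}_\kappa$.

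\begin{itemize}
\item Summing over $\tau$ makes the matrix labels symmetric. Combined with the antisymmetry of the wedge in the two slots, the contribution becomes $[z^{\ii}A,z^{\jj}B]$ paired with a term antisymmetric in $(A,B)$.
\item Invariance of $\mathbb{E}_\kappa$ gives $\sum_\ell\sum_m A_0\odot\cdots\odot[X,A_m]\odot\cdots=0$. This identity is what kills the commutator terms.
\item If the cancellation is not literally exact, we fall back on the weaker claim that the surviving term is $\dbar$-exact modulo the next step.
\end{itemize}

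This is the step I expect to be the main obstacle, since it is exactly where the $\lie{gl}_r$-extension differs from lemma \ref{lem:wlambda}.

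The mixed pairs between $z^{\ii}A$ and $z^{\jj}\mathsf{Eu}_I$ give $-z^{\jj}\,\mathsf{Eu}_I(z^{\ii})A$. This is again a polynomial current. Since $\mathsf{Eu}_I(z^{\ii})=\mathbf 1_{\ii\in I}z^{\ii}$, these terms cancel against the determinant antisymmetrization $\det(z^I,\mathsf{Eu}_I)$ in the same way the vector-field terms did.

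The key step is the brackets involving the $P(d)$-factor $P(d)A$ (where $A=A^{\kappa,\ell}_{\tau(0)}$).
\begin{itemize}
\item With a vector field, $[f\mathsf{Eu}_I,P A]=f\,\mathsf{Eu}_I(P)A=-d\,f\tau_I P A$. Here $f$ is a homogeneous polynomial of degree $m\ge1$. As in the proof of lemma \ref{lem:wlambda}, $f\tau_IP$ has Euler weight $m-d>-d$, and therefore it is $\dbar$-exact; write it as $\dbar\beta$.
\item With a current, $[PA,z^{\ii}B]=z^{\ii}P[A,B]$. This is a positive-degree holomorphic multiple of $P$, hence $\dbar$-exact as well.
\end{itemize}
In both cases the primitive $\beta$ lies in $\cJ_d$ (times $\lie{gl}_r$ or times $\lie{q}$). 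The remaining wedge factors are all $\dbar$-closed polynomial elements, so $\beta\wedge(\text{rest})$ is a primitive for the corresponding term of $\bd^{(2)}\mathbb{v}_{\lambda,\kappa}(d)$. Summing these primitives defines $\mathbb{u}$, of total degree one, with $\bd^{(2)}\mathbb{v}_{\lambda,\kappa}(d)=\dbar\mathbb{u}$. The same Euler-weight argument as before handles these terms uniformly.
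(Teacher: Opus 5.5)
There is a genuine gap in how you handle the brackets between a Witt factor $z^{\jj}\mathsf{Eu}_I$ and a current factor $z^{\ii}A$ with $1\le \ii\le k$. You allow $\ii\in I$ and claim the resulting terms cancel against the antisymmetrization in $\det(z^I,\mathsf{Eu}_I)$. They do not.

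Bracketing $z^{\ii}A$ with the factor $z^{\jj}\mathsf{Eu}_I$ leaves $z^{\jj}z^{\ii}A$ wedged with the \emph{other} factors $z^{\jj'}\mathsf{Eu}_I$, $\jj'\neq\jj$. So different $\jj$ give different chains and nothing cancels. (The Witt--Witt terms in lemma \ref{lem:wlambda} were not cancelled by antisymmetrization either; each bracket vanished on its own.)

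Nor can these terms be absorbed into $\dbar\mathbb{u}$, since each one is $P(d)A_{\tau(0)}$ wedged with a polynomial chain. For instance, take $d=2$, $k=1$ and $\Lambda_1=\{\1\}$. The bracket $[(z^{\1})^2\partial_{\1},z^{\1}e_b]=(z^{\1})^2e_b$ contributes $\pm\sum_{a,b}S^{ab}\,P(2)e_a\wedge(z^{\1})^2e_b$. Here $S\neq 0$ is the symmetric tensor representing $\mathbb{E}_\kappa$ in a basis $e_a$ of $\lie{gl}_r$. No other bracket produces a term of this shape, and its $\dbar$-class is nonzero because $[P(2)]\neq 0$.

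The missing idea is that the blocks $\Lambda_a$ must form a set partition of $\{k+1,\ldots,d\}$, i.e.\ the coordinates not used by $z^{\1}A,\ldots,z^{\kkk}A$. The definition leaves this labelling implicit, but the lemma needs it. With it, $\mathsf{Eu}_{\Lambda_a}(z^{\ii})=0$ for $\ii\le k$. This is the analogue of the second bracket identity in lemma \ref{lem:wlambda}(a), and all these mixed brackets vanish identically.

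Your current--current step is also not yet an argument. Invariance of $\mathbb{E}_\kappa$ plays no role here. The fallback to $\dbar$-exactness is also unavailable, since a surviving term would again be $P(d)$ times a polynomial chain.

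The correct reason is simple. Since $\cJ_d$ is graded commutative, $[fA,gB]=fg[A,B]$. After the sum over $\tau\in S_{k+1}$, the coefficient tensor is symmetric in any two matrix slots, while $[A,B]$ is antisymmetric. So every such contraction vanishes identically. This also kills the pairs involving $P(d)A_{\tau(0)}$, so your exactness argument for $[P(d)A,z^{\ii}B]$ is correct but superfluous. This is what the paper means by calling the $\lie{gl}_r(\cJ_d)$-part $\bd^{(2)}$-closed ``because $\cJ_d$ is commutative''.

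With both repairs, the only surviving brackets are $[z^{\jj}\mathsf{Eu}_I,P(d)A]=z^{\jj}\mathsf{Eu}_I(P)A$. Your Euler-weight argument for these is right: they have weight $1-d>-d$ in top Jouanolou degree, hence are $\dbar$-exact, and the remaining factors are $\dbar$-closed. This spells out what the paper compresses into ``the same descent argument'' as in lemma \ref{lem:wlambda}. There the relevant bracket was $[P\partial_{\1},f\mathsf{Eu}_I]$ rather than this one.
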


\begin{proof}
The $\lie{gl}_r(\cJ_d)$-part
\[
  \sum_{\tau\in S_{k+1}}\sum_{\ell}
  P(d)\,
  A^{\kappa,\ell}_{\tau(0)}
  \wedge z^{\1}A^{\kappa,\ell}_{\tau(1)}
  \wedge\cdots\wedge z^{\kkk}A^{\kappa,\ell}_{\tau(k)}
\]
is $\bd^{(2)}$-closed, because $\cJ_d$ is commutative.  The remaining factors are the same Witt chains as in
lemma~\ref{lem:wlambda}, so the same descent argument gives the desired $\mathbb{u}$.
\end{proof}

\begin{theorem}\label{thm:cwdr}
The $\lie{gl}_r$-extended universal Chern--Weil map
\[
  \mathsf{cw}_{d,r}
  \colon
  \C[y_1,\ldots,y_d,\lie{t}_1,\ldots,\lie{t}_r]_{d+1}
  \longrightarrow
  \HH^2_{\op{Lie}}
  \bigl(\lie{witt}_d\ltimes \lie{gl}_r(\cJ_d)\bigr)
\]
is injective.
\end{theorem}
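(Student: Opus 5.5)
The plan is to mimic the proof of theorem \ref{thm:cwd-injective}: produce a family of cycles in $C^{Lie}_\bu(\lie{witt}_d\ltimes\lie{gl}_r(\cJ_d))$, indexed by the same data as a basis of the domain, whose pairing matrix with the Chern--Weil cocycles is upper triangular with nonzero diagonal. First I reduce the domain to a convenient basis. A monomial in $\C[y_1,\ldots,y_d,\lie{t}_1,\ldots,\lie{t}_r]_{d+1}$ splits into a pure Witt part $y^{\nu}$ of weight $d-k$ and a $\lie{gl}_r$ part $\theta_\kappa$ of weight $k+1$, for some $-1\le k\le d$. The summand with $k=-1$ (no $\lie{t}$'s) is handled by theorem \ref{thm:cwd-injective}. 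Exactly as there, lemma \ref{lem:chd+1} lets me trade the partition $(2,1,\ldots,1)$ for $\op{ch}_{d+1}$, and the analogous Cayley--Hamilton trick lets me use any convenient spanning set in the mixed weights. The classes to separate are then $\op{ch}_{\nu;\kappa}(d,k)$ from \eqref{eq:ch-nu-kappa}, together with the pure classes $\op{ch}_\lambda(d)$.

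Next I promote the chains of definition \ref{dfn:partitionvv} to genuine cycles. Lemma \ref{lem:wlambdaMixed} supplies the first descent step $\bd^{(2)}\mathbb{v}_{\lambda,\kappa}=\dbar\mathbb{u}$. I then rerun the zig-zag of lemma \ref{lem:descent} verbatim in the semidirect product. This uses that the cohomology of $\lie{gl}_r(\cJ_d)$, like that of $\lie{witt}_d$, sits only in degrees $0$ and $d-1$ (proposition \ref{prop:Jouanolou}), which makes every intermediate $\dbar$-closed term exact. The final obstruction lands in $\wedge^\bu(\lie{w}_d^{poly}\ltimes\lie{gl}_r[z])$, and I need its first homology to vanish in the relevant polynomial weight. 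I expect this to follow from $H_1(\lie{w}_d^{poly})=0$ together with $[\mathsf{Eu},-]$ acting by nonzero weight on the positive-weight part of $\lie{gl}_r[z]$. The constant $\lie{gl}_r$ part needs a separate check, possibly using that its abelianisation is only the trace, which has zero weight. The result is cycles $\til{\mathbb{v}}_{\lambda,\kappa}$ whose Jouanolou-degree $(d-1)$ leading term is $\mathbb{v}_{\lambda,\kappa}$; the lower terms pair to zero with all Chern--Weil cocycles for degree reasons, as in proposition \ref{prop:upper-triangular-pairing}.

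The core step, and the one I expect to be the main obstacle, is the mixed analogue of proposition \ref{prop:upper-triangular-pairing}. I would order the index pairs $(\lambda,\kappa)$ first by $k$ and then within fixed $k$ by the order on partitions used before. The pairing of $\rho(\lie{c}^{*\nu}*\gamma_{\theta_{\kappa'}})$ with $\mathbb{v}_{\lambda,\kappa}$ is obtained by distributing the trace factors over the wedge factors. Since each matrix wedge factor $z^{\jj}A$ has zero Jacobian, the Witt cocycles $\lie{c}_j$ cannot eat it. So all $k+1$ matrix slots must be absorbed by $\gamma$'s, which forces $k'=k$. The resulting $\lie{gl}_r$ contribution is then
\[
\sum_{\tau,\ell}\theta_{\kappa'}\bigl(A^{\kappa,\ell}_{\tau(0)},\ldots,A^{\kappa,\ell}_{\tau(k)}\bigr)\propto\langle\theta_{\kappa'},\mathbb{E}_\kappa\rangle=\delta_{\kappa\kappa'},
\]
which gives diagonality in $\kappa$. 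One point to check is that the sum of products of $\gamma_{\lie{t}_j}$'s evaluated on a symmetric tensor really reproduces the polarized $\theta_{\kappa'}$, up to a nonzero combinatorial constant. The Witt blocks $\det(z^{\Lambda_i},\mathsf{Eu}_{\Lambda_i})$ are then matched against the $\op{ch}$ factors exactly as in proposition \ref{prop:upper-triangular-pairing}, giving upper triangularity in $\lambda$. The diagonal entry is a nonzero multiple of $\op{Res}(P(d)\,\d z^\1\cdots\d z^\dd)=1$. The delicate points are the variable bookkeeping, since the matrix slots consume $z^\1,\ldots,z^\kkk$ while the Witt blocks must use the remaining variables, and the sign and constant of the residue.

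Finally I conclude by the triangular argument from the proof of theorem \ref{thm:cwd-injective}. Given a relation $\sum a_{\lambda,\kappa}\op{ch}_{\lambda;\kappa}=0$, I pair it with $\til{\mathbb{v}}$ at the minimal index having nonzero coefficient, which forces that coefficient to vanish. Combined with the $k=-1$ case, this gives injectivity of $\mathsf{cw}_{d,r}$.
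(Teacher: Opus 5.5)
Your proposal follows the paper's proof: you lift the chains of definition \ref{dfn:partitionvv} to cycles via lemma \ref{lem:wlambdaMixed} and the descent zig-zag, use the dual basis $\mathbb{E}_\kappa$ to make the pairing block diagonal in $\kappa$, reuse the upper-triangular Witt pairing inside each block, and finish with the minimal-index argument. Your additional details only make explicit what the paper leaves implicit: the separate pure-Witt block $k=-1$, the arity count forcing $k'=k$, and the last descent step, where your separate check on constants is unnecessary because $[\partial_{\s}, z^{\s}A]=A$ already places all of $\lie{gl}_r[z]$, constants included, in the commutator.
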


\begin{proof}
Pair the classes $\op{ch}_{\nu;\kappa}(d,k)$ with the chains
$\mathbb{v}_{\lambda,\kappa'}(d)$.  By Lemma~\ref{lem:wlambdaMixed}, these chains can be lifted to cycles, so the pairings are homology invariants.

The $\lie{gl}_r$-factor separates the different $\kappa$'s:
\[
  \big\langle \theta_\kappa,\mathbb{E}_{\kappa'}\big\rangle
  =
  \delta_{\kappa,\kappa'}.
\]
Thus the pairing matrix is block diagonal with respect to $\kappa$.  Inside each block, the Witt part is exactly the
pairing matrix appearing in the proof of lemma~\ref{lem:wlambda}; with the same ordering on partitions, it is upper triangular with nonzero diagonal entries.  Therefore the full pairing matrix is upper triangular with nonzero diagonal.  Hence no nontrivial linear combination of the classes
$\op{ch}_{\nu;\kappa}(d,k)$ lies in the kernel of $\mathsf{cw}_{d,r}$, proving injectivity.
\end{proof}

\section{Cohomology of the Witt algebra}\label{s:gf}

In this section we finish the classification of central extensions of the dg Witt algebra $\lie{witt}_d$ by proving that
the Chern--Weil map \eqref{eq:cw1} is an isomorphism.
We have shown in the last section that that this map is injective.
In this section, we prove the map is an isomorphism by computing the second Lie algebra (hyper) cohomology of $\lie{witt}
_d$ using a pair of filtrations which are inspired from \cite[section 2.4.1]{FuksBook} as well as \cite{HKgf}.

It is a useful time to spell out the topology we use on the dg Witt algebra. 
The underlying complex of $\lie{witt}_d$ lies in the dg category associated to the quasi-abelian category $\mathrm{ILC}_\C$, i.e. the category of inductive limits of linearly compact complex vector spaces (as discussed in \cite[section 4.1]{FHK}).
For us, the cochains which comprise $C^\bu_{Lie}(\lie{witt}_d)$ are continuous; meaning explicitly that we consider functionals which are continuous on each linearly compact subspace of $\lie{witt}_d$.
The argument we present below for computing the (continous) Lie algebra cohomology of $\lie{witt}_d$ applies, without any significant modifications, to compute the \textit{algebraic} Lie algebra cohomology of $\lie{witt}_d^{poly}$ (meaning, no extra condition on cochains).
In fact, since $GL_d$-invariant cochains are automatically continuous, there is no essential difference, homotopically, between the algebraic and continuous settings.

\subsection{The diagonal and order filtrations}

The main computational tool is a filtration, which we call \textit{diagonal}, on the Chevalley-Eilenberg complex
computing the continous Lie algebra (hyper)cohomology of
$\lie{witt}_d$.
The idea for this filtration originates in the proof \cite[theorem 2.4.1a,b]{FuksBook} as well as more recently in
\cite{HKgf}.
As we define it, this diagonal filtration is on the $GL_d$-invariant continuous cochain complex
\begin{equation}\label{}
  C_{(0)}^{\bu} \define C^\bu_{Lie}(\lie{witt}_d)^{GL_d}
\end{equation}
which is quasi-isomorphic to the full Chevalley-Eilenberg complex by reductivity.

Fix an integer $n$ and denote by $C^n_{(0)}$ the graded vector space of $n$-linear cochains inside of $C^\bu_{(0)}$.
We introduce first the "diagonal" filtration on $C^n_{(0)}$ which measures how many distinct points a cochain is
sensitive to.
Passing to the $k$th associated graded will allow us to isolate cochains which see precisely $k$ distinct points, and
the residual jet filtration on this associated graded will measure the order of vanishing in the normal direction to
the corresponding diagonal stratum.
With this sketch of the idea, we proceed to introduce some notations and after we will provide the precise definitions of the filtrations.

\paragraph
Let $\pi$ be a partition of the set $I=\{1,\dots,n\}$.
We can view it as a equivalence relation on $I$ by declaring that $i\sim_\pi j$ if $i,j$ lie in the same block.
Define the following ideal
\[
  \fI_\pi \define \big(z^{\s}_i-z^{\s}_j|i\sim_\pi j \; , \quad \s=\1,\dots,\dd \big) \subset \C\llbracket \mathbf{z}
  \rrbracket^{\otimes n},
\]
where 
\[
  \C\llbracket \mathbf{z} \rrbracket = \C\llbracket z_1,\ldots,z_n \rrbracket = \C \llbracket z_1^\1, \ldots ,
  z_1^{\dd} ; \ldots ; z_n^{\1} , \ldots, z_n^{\dd} \rrbracket
\]

For a partition $\pi$, define
\[
  \lie{witt}_d^{\otimes_\pi n}
  \define
  \lie{witt}_d^{\otimes n}/\fI_\pi \lie{witt}_d^{\otimes n}.
\]
Equivalently, if $B$ runs over the blocks of $\pi$, then
\[
  \lie{witt}_d^{\otimes_\pi n}
  \cong
  \bigotimes_{B\in \pi}
  \lie{witt}_d^{\otimes_{\C\llbracket z\rrbracket}|B|},
\]
where inside each block all formal coordinates have been identified.
We also write
\[
  \widehat{\lie{witt}_d^{\otimes_\pi n}}
  \define
  \lim_{\longleftarrow N}
  \frac{\lie{witt}_d^{\otimes n}}
  {\fI_\pi^{N+1}\lie{witt}_d^{\otimes n}}
\]
for the completion along the partial diagonal $\Delta_\pi$.

Next define the lower diagonal inside $\Delta_\pi$. Let
\[
  \fI_{\pi,k-1}
  \define
  \bigcap_{\substack{\rho\in \operatorname{Part}(I,k-1)\\ \rho\leq \pi}}
  \frac{\fI_\rho}{\fI_\pi}
  \subset
  \C\llbracket z_1,\ldots,z_n\rrbracket/\fI_\pi ,
\]
where $\rho\leq \pi$ means that $\rho$ is obtained from $\pi$ by merging two blocks. This ideal cuts out the union of lower diagonals inside $\Delta_\pi$, namely the locus where two distinct $\pi$-blocks collide.
We write
\[
  \left(
  \lie{witt}_d^{\otimes_\pi n}
  \right)^{\widehat\Delta_{\pi,k-1}}
  \define
  \lim_{\longleftarrow M}
  \frac{\lie{witt}_d^{\otimes_\pi n}}
  {\fI_{\pi,k-1}^{M+1}\lie{witt}_d^{\otimes_\pi n}}
\]
for the completion of $\lie{witt}_d^{\otimes_\pi n}$ along this lower diagonal.

\paragraph
Let $\mathrm{Part}(I,k)$ be the set of partitions of $I$ with exactly $k$-blocks.
Define
\[
  \Delta_{k}^{\leq N} C_{(0)}^{n,\otimes} \define \mathrm{Hom}_{\lie{gl}_d}\left(\frac{\lie{witt}^{\otimes n}_d}
{\cap_{\pi\in \mathrm{Part}(I,k)}\fI^{N+1}_{\pi}\lie{witt}^{\otimes n}_d} \; , \;\C\right)
\]
Let
\[
\Delta_{k}^{\leq N}C_{(0)}^{n} \define \op{Alt}_n \big(\Delta_{k}^{\leq N}C_{(0)}^{n,\otimes} \big)
\]
be the totally (graded) alternating subspace.
It is a subspace of space of $n$-linear continuous cochains on $\lie{witt}_d$.
We set $\Delta_{0}C_{(0)}^{n}=0$ and define
\[
\Delta_{k}C_{(0)}^{n} \define \bigcup_{N\geq 0}\Delta_{k}^{\leq N}C_{(0)}^{n}.
\]
for $k > 0$.
The $k$ here measures the ``multi-locality" of the invariant cochain.
This defines an increasing filtration on the space of $n$-linear continuous cochains:
\begin{equation}\label{}
  C^n_{(0)} = \Delta_n C^n_{(0)} \supset \Delta_{n-1} C^n_{(0)} \supset \cdots \supset \Delta_{k} C_{(0)}^n \supset\Delta_{k-1} C_{(0)}^n \supset \cdots
 \supset \Delta_1 C_{(0)}^n \supset 0 .
\end{equation}
The first piece $\Delta_1 C^n_{(0)}$ is the space of \textit{local} $n$-linear cochains.
That is, those cochains supported at a single point.
This filtration is exhaustive because we look at the space of $GL_d$-invariant cochains.

\begin{lemma}\label{lem:diag}
For each $n$ and $1\leq k\leq n$, there is a natural identification
\[
  \operatorname{gr}^\Delta_k C^n_{(0)}
  \cong
  \operatorname{Alt}_n
  \left(
  \bigoplus_{\pi\in \operatorname{Part}(I,k)}
  \frac{
  \operatorname{Hom}_{\lie{gl}_d}
  \left(
    \left(\Hat{\lie{witt}_d^{\otimes_\pi n}}\right),
  \C
  \right)}
  {
  \operatorname{Hom}_{\lie{gl}_d}
  \left(
  \left(\lie{witt}_d^{\otimes_\pi n}\right)^{\widehat\Delta_{\pi,k-1}},
  \C
  \right)}
  \right).
\]
\end{lemma}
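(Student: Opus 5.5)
The plan is to compute $\operatorname{gr}^\Delta_k$ directly from the definitions. First we fix $N$ and describe $\Delta_k^{\leq N}C^{n,\otimes}_{(0)}$ and $\Delta_{k-1}^{\leq M}C^{n,\otimes}_{(0)}$ as $\mathfrak{gl}_d$-invariant duals of quotients of $\lie{witt}_d^{\otimes n}$ by the ideals $\bigcap_{\pi\in\operatorname{Part}(I,k)}\fI_\pi^{N+1}$ and $\bigcap_{\rho\in\operatorname{Part}(I,k-1)}\fI_\rho^{M+1}$. We then pass to the alternating parts at the very end. This is harmless: $\operatorname{Alt}_n$ is the image of an idempotent, since we work over $\C$, so it commutes with quotients and direct sums. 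Likewise $\operatorname{Hom}_{\lie{gl}_d}(-,\C)$ is exact on the relevant $GL_d$-representations by reductivity, and each weight space is finite dimensional. So the problem reduces to a statement about the quotient modules (equivalently, about the closed subschemes of the formal neighborhood of the diagonals) before dualizing.

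The geometric content is this. The union $\bigcup_{\pi\in\operatorname{Part}(I,k)}\Delta_\pi$ of $k$-block diagonals, taken modulo the union of the $(k-1)$-block diagonals, decomposes as a disjoint union over $\pi$. Two distinct $k$-block diagonals $\Delta_\pi$ and $\Delta_{\pi'}$ meet only inside a diagonal with at most $k-1$ blocks, namely the one indexed by their join. Algebraically, we prove that the natural map
\[
\frac{\lie{witt}_d^{\otimes n}}{\bigcap_{\pi}\fI_\pi^{N+1}\lie{witt}_d^{\otimes n}}\longrightarrow \bigoplus_{\pi\in\operatorname{Part}(I,k)}\frac{\lie{witt}_d^{\otimes n}}{\fI_\pi^{N+1}\lie{witt}_d^{\otimes n}}
\]
is injective, which is immediate from the definition of the intersection. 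Its cokernel is controlled by the pairwise sums $\fI_\pi^{N+1}+\fI_{\pi'}^{N+1}$, whose radicals are the ideals $\fI_\rho$ of the coarser partitions $\rho$. Dualizing and taking the union over $N$ identifies $\Delta_kC^{n}_{(0)}$ with functionals on $\bigoplus_\pi\widehat{\lie{witt}_d^{\otimes_\pi n}}$, modulo those functionals that factor through a common lower diagonal. Taking the quotient by $\Delta_{k-1}C^n_{(0)}$ then kills exactly the functionals on each $\widehat{\lie{witt}_d^{\otimes_\pi n}}$ that extend continuously along the completion at $\Delta_{\pi,k-1}$, that is, those supported on the lower diagonal inside $\Delta_\pi$. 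This yields the quotient $\operatorname{Hom}(\widehat{\cdots})/\operatorname{Hom}((\cdots)^{\widehat\Delta_{\pi,k-1}})$ in each summand.

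For the polynomial algebra, the required commutative algebra is a Chinese-remainder-type statement modulo the lower diagonals: the ideals $\fI_\pi$ for distinct $\pi$ of $k$ blocks become comaximal after localizing away from $\bigcup_\rho\Delta_\rho$. We then transport this to $\lie{witt}_d^{\otimes n}$ by tensoring. Here we use Proposition \ref{prop:flatness}: $\cJ_d$, and hence $\lie{witt}_d=\cJ_d\otimes\lie{q}$, is flat over $\C\llbracket z\rrbracket$, so tensoring preserves the intersections and the short exact sequences of ideals.

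\textbf{Main obstacle.} We expect the hardest step to be the passage between the filtration by powers $\fI^{N+1}$ and the completions, and in particular checking that the $\varinjlim_N$ of duals commutes with the quotient by the lower-diagonal part. The ideals $\fI_\pi^{N+1}+\fI_{\pi'}^{N+1}$ and $\fI_\rho^{M}$ are only cofinal to one another, not equal. The plan is to prove an Artin--Rees-type cofinality for these ideals: there should be an $M(N)$ with $\fI_\rho^{M(N)}\subset\fI_\pi^{N+1}+\fI_{\pi'}^{N+1}\subset\fI_\rho$. This holds because the ideals are generated by linear forms in the differences of coordinates, so the estimate can be obtained explicitly by a change of linear coordinates. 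Continuity, in the $\mathrm{ILC}$ sense, is what allows us to replace the quotients by the completed tensor products.
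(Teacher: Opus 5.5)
Your proposal is correct and follows essentially the same route as the paper. You set aside $\operatorname{Alt}_n$, which is harmless since the filtration is $S_n$-stable and $\operatorname{Alt}_n$ is the image of an idempotent. You use that distinct $k$-block diagonals $\Delta_\pi,\Delta_{\pi'}$ meet only inside a diagonal with at most $k-1$ blocks, so that modulo $\Delta_{k-1}C^n_{(0)}$ the contributions split over $\pi$. You then identify each summand as functionals on $\widehat{\lie{witt}_d^{\otimes_\pi n}}$ modulo those supported on $\Delta_{\pi,k-1}$. Your extra steps (injectivity into $\bigoplus_\pi$, the flatness transport, and the cofinality $(\fI_\pi+\fI_{\pi'})^{2N+1}\subset\fI_\pi^{N+1}+\fI_{\pi'}^{N+1}$) make explicit what the paper only asserts geometrically.

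One point your pairwise estimate does not literally cover arises for $k\geq 3$. There $\Delta_{k-1}$ is defined by the intersections $\bigcap_\rho\fI_\rho^{M+1}$, while the completion in the statement uses ordinary powers $\fI_{\pi,k-1}^{M+1}$ of the intersection ideal. You need these two systems to be cofinal modulo $\fI_\pi^{N+1}$. This holds by a primary-decomposition argument in the Noetherian ring $\C\llbracket \mathbf{z}\rrbracket$, not by a linear change of coordinates.
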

\begin{proof}
We first ignore the alternating operation. 
An element of $\Delta_k^{\leq N} C_{(0)}^{n,\otimes}$ is a $\lie{gl}_d$-invariant $n$-linear cochain whose support is contained, to order $N$, in the union of the partial diagonals $\Delta_\pi$ with $\pi\in \operatorname{Part}(I,k)$.

The quotient
\[
  \frac{\lie{witt}_d^{\otimes n}}
  {\bigcap_{\pi\in \operatorname{Part}(I,k)}
  \fI_\pi^{N+1}\lie{witt}_d^{\otimes n}}
\]
is the algebraic object corresponding to the $N$th infinitesimal neighborhood of the union
\[
  \bigcup_{\pi\in \operatorname{Part}(I,k)}\Delta_\pi .
\]
The different partial diagonals $\Delta_\pi$ with $k$ blocks meet only along lower diagonals, namely along loci where some distinct blocks collide. These intersections are contained in the union of partial diagonals with at most $k-1$ blocks. Therefore, after quotienting by $\Delta_{k-1}C_{(0)}^{n,\otimes}$, the contribution of the union of $k$-block diagonals splits as a direct sum of the contributions from the individual strata $\Delta_\pi$, with their lower-diagonal overlaps removed.

For fixed $\pi\in \operatorname{Part}(I,k)$, the contribution of $\Delta_\pi$ is the continuous dual of the formal completion
\[
  \widehat{\lie{witt}_d^{\otimes_\pi n}}
  =
  \lim_{\longleftarrow N}
  \frac{\lie{witt}_d^{\otimes n}}
  {\fI_\pi^{N+1}\lie{witt}_d^{\otimes n}}.
\]
The part already supported on lower diagonals inside $\Delta_\pi$ is the continuous dual of
\[
  \left(
  \lie{witt}_d^{\otimes_\pi n}
  \right)^{\widehat\Delta_{\pi,k-1}}
  =
  \lim_{\longleftarrow M}
  \frac{\lie{witt}_d^{\otimes_\pi n}}
  {\fI_{\pi,k-1}^{M+1}\lie{witt}_d^{\otimes_\pi n}}.
\]
Hence the contribution of the stratum indexed by $\pi$, modulo its lower-diagonal part, is
\[
  \frac{
  \operatorname{Hom}_{\lie{gl}_d}
  \left(
  \widehat{\lie{witt}_d^{\otimes_\pi n}},
  \C
  \right)}
  {
  \operatorname{Hom}_{\lie{gl}_d}
  \left(
  \left(
  \lie{witt}_d^{\otimes_\pi n}
  \right)^{\widehat\Delta_{\pi,k-1}},
  \C
  \right)}.
\]
Summing over $\pi\in \operatorname{Part}(I,k)$ gives the associated graded of the nonalternating diagonal filtration:
\[
  \operatorname{gr}^\Delta_k C_{(0)}^{n,\otimes}
  \cong
  \bigoplus_{\pi\in \operatorname{Part}(I,k)}
  \frac{
  \operatorname{Hom}_{\lie{gl}_d}
  \left(
  \widehat{\lie{witt}_d^{\otimes_\pi n}},
  \C
  \right)}
  {
  \operatorname{Hom}_{\lie{gl}_d}
  \left(
  \left(
  \lie{witt}_d^{\otimes_\pi n}
  \right)^{\widehat\Delta_{\pi,k-1}},
  \C
  \right)}.
\]

Finally, the diagonal filtration is $S_n$-equivariant, and the Chevalley--Eilenberg cochains are obtained from the
nonalternating cochains by applying total graded antisymmetrization. Since $\operatorname{Alt}_n$ commutes with passing
to the associated graded of this $S_n$-equivariant filtration, we obtain
\[
  \operatorname{gr}^\Delta_k C^n_{(0)}
  \cong
  \operatorname{Alt}_n
  \left(
  \operatorname{gr}^\Delta_k C_{(0)}^{n,\otimes}
  \right),
\]
which gives the claimed formula.
\end{proof}

\paragraph
Let $\pi\in \operatorname{Part}(I,k)$, and let $B_1,\ldots,B_k$ be its blocks. For each block $B_j$, define
\[
  \lie q_{\Delta_j}
  \subset
  \lie q^{\oplus |B_j|}
\]
to be the diagonal copy of $\lie q$. 
For $m\geq 0$, define
\[
  \lie N^m_\pi
  \define
  \bigoplus_{m_1+\cdots+m_k=m}
  \bigotimes_{j=1}^k
  \operatorname{Sym}^{m_j}
  \left(
  \lie q^{\oplus |B_j|}/\lie q_{\Delta_j}
  \right).
\]

\begin{lemma}\label{lem:normaljet}
Let $\pi\in \operatorname{Part}(I,k)$. Then the $m$th associated graded piece for the $\fI_\pi$-adic filtration is naturally
\[
  \frac{
  \fI_\pi^m\lie{witt}_d^{\otimes n}
  }{
  \fI_\pi^{m+1}\lie{witt}_d^{\otimes n}
  }
  \cong
  \lie N^m_\pi
  \otimes
  \lie{witt}_d^{\otimes_\pi n}.
\]
Equivalently, the degree $m$ normal jets to the partial diagonal $\Delta_\pi$ are given by $\lie N^m_\pi$.
\end{lemma}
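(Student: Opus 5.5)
We prove Lemma \ref{lem:normaljet} in two stages. First we establish the statement for the base ring $R=\C\llbracket \mathbf z\rrbracket$. Then we transport it to $\lie{witt}_d^{\otimes n}$ using flatness (Proposition \ref{prop:flatness}).

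\emph{Step 1: the regular-sequence computation for $R$.} For each block $B_j$ of $\pi$, choose a base index $b_j\in B_j$ and change coordinates to
\[
  u^{\s}_i = z^{\s}_i - z^{\s}_{b_j}\quad (i\in B_j\setminus\{b_j\}), \qquad w^{\s}_j = z^{\s}_{b_j}.
\]
In these coordinates $R\cong \C\llbracket w\rrbracket\llbracket u\rrbracket$. The ideal $\fI_\pi$ is generated by the variables $u^{\s}_i$, and these form a regular sequence. Consequently
\[
  \fI_\pi^m/\fI_\pi^{m+1}\cong \op{Sym}^m(\op{span}\{u^{\s}_i\})\otimes R/\fI_\pi .
\]
The span of the $u_i^{\s}$ attached to block $B_j$ is canonically the quotient $\lie q^{\oplus|B_j|}/\lie q_{\Delta_j}$, up to the dualization convention already implicit in the definition of $\lie N^m_\pi$. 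In particular it does not depend on the choice of $b_j$. Since $\op{Sym}$ of a direct sum decomposes, we obtain $\bigoplus_{\sum m_j=m}\bigotimes_j \op{Sym}^{m_j}$, which is $\lie N^m_\pi$. This identification is $GL_d$-equivariant because the differences $z_i-z_{b_j}$ transform in $\lie q^*$.

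\emph{Step 2: passing to $\lie{witt}_d^{\otimes n}$.} By Proposition \ref{prop:flatness}, $\cJ_d$ is flat over $\C\llbracket z\rrbracket$. Hence $\lie{witt}_d=\cJ_d\otimes\lie q$ is flat, and the completed tensor product $M=\lie{witt}_d^{\otimes n}$ is flat over $R$; this step is taken in the appropriate completed sense. Tensoring the short exact sequence
\[
0\to\fI_\pi^{m+1}\to\fI_\pi^m\to\fI_\pi^m/\fI_\pi^{m+1}\to0
\]
with $M$ preserves exactness, and flatness also gives $\fI^m_\pi\otimes_R M\cong\fI_\pi^m M$. Therefore
\[
  \fI_\pi^mM/\fI_\pi^{m+1}M\cong \lie N^m_\pi\otimes (R/\fI_\pi)\otimes_R M=\lie N^m_\pi\otimes\lie{witt}_d^{\otimes_\pi n}.
\]
All maps are compatible with the differential $\dbar$, since $\dbar$ is $\C\llbracket z\rrbracket$-linear in each factor, and with the $\lie{gl}_d$-action.

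\emph{Main obstacle.} The delicate point is the flatness of the completed $n$-fold tensor product over $R$. Flatness of $\cJ_d$ over $\C\llbracket z\rrbracket$ gives flatness of the algebraic tensor product $\cJ_d^{\otimes n}$ over $\C\llbracket z\rrbracket^{\otimes n}$, but we need it over the completion $R$.

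I would handle this degree by degree. In each cohomological degree, $\cJ_d$ is built from the degree-zero piece $\cJ_d^0$ and the projective module $\cJ_d^1$ from the proof of Proposition \ref{prop:flatness}, and $\cJ_d^0$ is locally a polynomial ring over a localization of $\C\llbracket z\rrbracket$. So we may reduce to modules of the form $R\otimes(\text{polynomial ring})$ localized, which are flat over $R$.

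Alternatively, we could avoid flatness altogether. Working in the $u,w$ coordinates, $M$ is a completed module in which multiplication by the $u^{\s}_i$ acts freely, because the $u$'s are variables of the power-series base and $\cJ_d$ is flat in its own variables. With that in hand, the Koszul argument of Step 1 applies verbatim with $M$ in place of $R$.
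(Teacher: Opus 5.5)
Your proposal is correct and follows the paper's argument. You identify the $\fI_\pi$-adic associated graded of $\C\llbracket z_1,\ldots,z_n\rrbracket$ with the symmetric algebra on the blockwise conormal space, and then tensor with $\lie{witt}_d^{\otimes n}$. The one difference is that you invoke flatness (Proposition~\ref{prop:flatness}) where the paper asserts that $\lie{witt}_d^{\otimes n}$ is free over $\C\llbracket z_1,\ldots,z_n\rrbracket$. Flatness is the right ingredient: the relation $\sum_{\s} z^{\s}x^{\s}=1$ in any tensor factor forces $M=\mathfrak m M$ for the maximal ideal $\mathfrak m$, so $M$ cannot be free.
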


\begin{proof}
The associated graded of the $\fI_\pi$-adic filtration is
\[
  \operatorname{gr}_{\fI_\pi}
  \left(
  \lie{witt}_d^{\otimes n}
  \right)
  =
  \bigoplus_{m\geq 0}
  \frac{
  \fI_\pi^m\lie{witt}_d^{\otimes n}
  }{
  \fI_\pi^{m+1}\lie{witt}_d^{\otimes n}
  }.
\]
The usual identification of the associated graded of the ideal filtration with the symmetric algebra on the cononormal
gives the a $\lie{gl}_d$-equivariant identification
\[
  \operatorname{gr}_{\fI_\pi}
  \left(
  \C\llbracket z_1,\ldots,z_n\rrbracket
  \right)
  \cong
  \operatorname{Sym}_{\C\llbracket \mathbf{z}\rrbracket^{\otimes_\pi n}}
  \left(
  \fI_\pi/\fI_\pi^2
  \right).
\]
Here
\[
  \C\llbracket \mathbf{z} \rrbracket^{\otimes_\pi n}
  =
  \C\llbracket z_1,\ldots,z_n\rrbracket/\fI_\pi
\]
is the coordinate ring of $\Delta_\pi$.

For a block $B_j$, the ideal identifying the coordinates inside $B_j$ has conormal module 
\[
  \left(
  \lie q^{\oplus |B_j|}/\lie q_{\Delta_j}
  \right)^* .
\]
In total, the normal directions to the ideal corresponding to $\pi$ split as the direct sum over the blocks:
\[
  \bigoplus_{j=1}^k
  \left(
  \lie q^{\oplus |B_j|}/\lie q_{\Delta_j}
  \right).
\]
The $m$th symmetric power of this normal space is precisely 
\[
  \lie N^m_\pi = \operatorname{Sym}^m
  \left(
  \bigoplus_{j=1}^k
  \lie q^{\oplus |B_j|}/\lie q_{\Delta_j}
  \right)
  \cong
  \bigoplus_{m_1+\cdots+m_k=m}
  \bigotimes_{j=1}^k
  \operatorname{Sym}^{m_j}
  \left(
  \lie q^{\oplus |B_j|}/\lie q_{\Delta_j}
  \right).
\]

Finally, restricting $\lie{witt}_d^{\otimes n}$ to the partial diagonal $\Delta_\pi$ gives
\[
  \lie{witt}_d^{\otimes n}/\fI_\pi\lie{witt}_d^{\otimes n}
  =
  \lie{witt}_d^{\otimes_\pi n}.
\]
Since $\lie{witt}_d^{\otimes n}$ is a free module over
$\C\llbracket z_1,\ldots,z_n\rrbracket$, tensoring the associated graded of the coordinate ring with the restriction of $\lie{witt}_d^{\otimes n}$ gives
\[
  \frac{
  \fI_\pi^m\lie{witt}_d^{\otimes n}
  }{
  \fI_\pi^{m+1}\lie{witt}_d^{\otimes n}
  }
  \cong
  \lie N^m_\pi
  \otimes
  \lie{witt}_d^{\otimes_\pi n}.
\]
This proves the claim.
\end{proof}

\paragraph 
The $k$th associated graded of the diagonal filtration
\[
  \mathrm{gr}^{\Delta}_kC_{(0)}^{n}
  = \frac{\Delta_{k}C_{(0)}^{n}}{\Delta_{k-1}C_{(0)}^{n}}
\]
carries an additional filtration by jet order.
Precisely, we index the jet-order by the complementary degree $q$ to get a decreasing filtration; for $q = 0, 1, \ldots$ define
\[
  F^q\big(
  \mathrm{gr}^{\Delta}_kC_{(0)}^{n}\big)
    \define \frac{\Delta_{k}^{\leq n-q} C_{(0)}^{n}}{\Delta_{k-1}
C_{(0)}^{n} \cap \Delta_{k}^{\leq n-q} C^n_{(0)}} =
\frac{\Delta_{k}^{\leq n-q} C_{(0)}^{n}+\Delta_{k-1} C_{(0)}^{n}}{\Delta_{k-1} C^n_{(0)}}
\]
Denote by $\op{gr}^{\op{jet}}_q \op{gr}_k^{\Delta} C^n_{(0)}$ the $q$th associated graded of this secondary filtration.

\begin{lemma}\label{lem:grgr}
Let $1\leq k\leq n$ and $q\geq 0$. 
There is a natural identification
\[
  \operatorname{gr}^{\operatorname{jet}}_q
  \operatorname{gr}^{\Delta}_k C^n_{(0)}
  \cong
  \operatorname{Alt}_n
  \left(
  \bigoplus_{\pi\in \operatorname{Part}(I,k)}
  \frac{
  \operatorname{Hom}_{\lie{gl}_d}
  \left(
  \lie N^{n-q}_\pi
  \otimes
  \lie{witt}_d^{\otimes_\pi n},
  \C
  \right)}
  {
  \operatorname{Hom}_{\lie{gl}_d}
  \left(
  \lie N^{n-q}_\pi
  \otimes
  \left(
  \lie{witt}_d^{\otimes_\pi n}
  \right)^{\widehat\Delta_{\pi,k-1}},
  \C
  \right)}
  \right).
\]
\end{lemma}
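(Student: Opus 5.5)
We will deduce the statement from lemma \ref{lem:diag} together with lemma \ref{lem:normaljet}. The plan has three steps: unpack the jet filtration before alternating, identify each jet-graded piece stratum by stratum, and then apply $\operatorname{Alt}_n$ at the very end.

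\textbf{Step 1: the non-alternating jet filtration.} We work first with $C^{n,\otimes}_{(0)}$. By lemma \ref{lem:diag}, for each $\pi\in\operatorname{Part}(I,k)$ the $\pi$-summand of $\operatorname{gr}^\Delta_k C^{n,\otimes}_{(0)}$ is the quotient of the continuous $\lie{gl}_d$-invariant dual of $\widehat{\lie{witt}_d^{\otimes_\pi n}}$ by the part supported on lower diagonals. The jet filtration $F^q$ is cut out by the order of vanishing $N=n-q$ along $\Delta_\pi$. It is therefore induced, summand by summand, from the $\fI_\pi$-adic filtration on $\lie{witt}_d^{\otimes n}$. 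On the $\pi$-summand, the passage from $F^{q+1}$ to $F^q$ corresponds to the passage from $\lie{witt}_d^{\otimes n}/\fI_\pi^{n-q}$ to $\lie{witt}_d^{\otimes n}/\fI_\pi^{n-q+1}$.

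\textbf{Step 2: exactness of the dual on a fixed stratum.} The successive quotient in Step 1 is given by the short exact sequence
\[
0\to \fI_\pi^{n-q}\lie{witt}_d^{\otimes n}/\fI_\pi^{n-q+1}\lie{witt}_d^{\otimes n}\to \lie{witt}_d^{\otimes n}/\fI_\pi^{n-q+1}\to \lie{witt}_d^{\otimes n}/\fI_\pi^{n-q}\to 0 .
\]
Its kernel is identified with $\lie N^{n-q}_\pi\otimes\lie{witt}_d^{\otimes_\pi n}$ by lemma \ref{lem:normaljet}. We then take continuous duals and $\lie{gl}_d$-invariants. Continuous duality is exact on linearly compact pieces, and invariants under the reductive $\lie{gl}_d$ are exact. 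Hence the graded piece of the dual is $\operatorname{Hom}_{\lie{gl}_d}(\lie N^{n-q}_\pi\otimes\lie{witt}_d^{\otimes_\pi n},\C)$.

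\textbf{Step 3: the lower-diagonal quotient.} We must check that modding out by $\Delta_{k-1}$ commutes with taking the jet-graded piece. The key input is that $\fI_{\pi,k-1}$ lives on the coordinate ring of $\Delta_\pi$. The normal-jet factor $\lie N^{n-q}_\pi$ is a finite-dimensional constant $\lie{gl}_d$-module, hence flat over $\C\llbracket\mathbf z\rrbracket/\fI_\pi$. Consequently, completing along $\Delta_{\pi,k-1}$ commutes with tensoring by $\lie N^{n-q}_\pi$. This gives the denominator
\[
\operatorname{Hom}_{\lie{gl}_d}\bigl(\lie N^{n-q}_\pi\otimes(\lie{witt}_d^{\otimes_\pi n})^{\widehat\Delta_{\pi,k-1}},\C\bigr).
\]
The identification of the graded piece then follows from the $3\times3$ lemma applied to the two filtrations.

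\textbf{Main obstacle.} We expect the delicate point to be this compatibility of the two filtrations. Concretely, one must show that
\[
\Delta_{k-1}C^{n}_{(0)}\cap\Delta_k^{\leq n-q}C^n_{(0)}
\]
is compatible with the splitting over $\pi$ from lemma \ref{lem:diag}. Cochains supported on lower diagonals may a priori have jet order along $\Delta_\pi$ mixing across different $\pi$'s. We would handle this using the fact, used in lemma \ref{lem:diag}, that distinct $k$-block diagonals meet only inside $k-1$-block diagonals. Modulo $\Delta_{k-1}$, the ideals $\fI_\pi^{N+1}$ for different $\pi$ therefore become comaximal after localizing away from $\Delta_{\pi,k-1}$, and the jet filtration splits as a direct sum over $\pi$. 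The flatness of $\cJ_d$ from proposition \ref{prop:flatness}, and hence of $\lie{witt}_d$ over $\C\llbracket z\rrbracket$, guarantees that the $\fI_\pi$-adic graded pieces are as claimed after tensoring.

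\textbf{Final step.} The jet filtration is $S_n$-equivariant, since $S_n$ permutes the partitions and their normal bundles. Over $\C$, $\operatorname{Alt}_n$ is an exact projector and so commutes with taking associated gradeds. Applying it yields the stated formula.
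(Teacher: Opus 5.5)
Your proposal is correct and follows essentially the same route as the paper. Like the paper, you fix $\pi$, dualize the short exact sequence $0\to \fI_\pi^{N}M/\fI_\pi^{N+1}M\to M/\fI_\pi^{N+1}M\to M/\fI_\pi^{N}M\to 0$ with $M=\lie{witt}_d^{\otimes n}$, use exactness of $\lie{gl}_d$-invariants, identify the kernel via lemma \ref{lem:normaljet}, set $N=n-q$, and then invoke lemma \ref{lem:diag} together with $\operatorname{Alt}_n$; your Step 3 and the compatibility discussion with the lower-diagonal quotient just spell out what the paper leaves implicit in its final appeal to lemma \ref{lem:diag}.
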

\begin{proof}
The only point to check is how the jet filtration on cochains is related to the $\fI_\pi$-adic filtration appearing in
lemma \ref{lem:normaljet}.

Fix a partition $\pi\in \operatorname{Part}(I,k)$ and set
\[
  M=\lie{witt}_d^{\otimes n},
  \qquad
  I=\fI_\pi .
\]
The $N$th infinitesimal neighborhood of the partial diagonal $\Delta_\pi$ is represented by
\[
  M/I^{N+1}M .
\]
Therefore the order-$N$ associated graded piece of the jet filtration on $n$-cochains is of the form 
\[
  \frac{
  \operatorname{Hom}_{\lie{gl}_d}
  \left(
  M/I^{N+1}M,\C
  \right)}
  {
  \operatorname{Hom}_{\lie{gl}_d}
  \left(
  M/I^NM,\C
  \right)}.
\]
The short exact sequence dual to
\[
  0
  \to
  I^NM/I^{N+1}M
  \to
  M/I^{N+1}M
  \to
  M/I^NM
  \to
  0
\]
remains exact after taking $\lie{gl}_d$-invariants 
Thus
\[
  \frac{
  \operatorname{Hom}_{\lie{gl}_d}
  \left(
  M/I^{N+1}M,\C
  \right)}
  {
  \operatorname{Hom}_{\lie{gl}_d}
  \left(
  M/I^NM,\C
  \right)}
  \cong
  \operatorname{Hom}_{\lie{gl}_d}
  \left(
  I^NM/I^{N+1}M,\C
  \right).
\]
By lemma~\ref{lem:normaljet}, we have
\[
  I^NM/I^{N+1}M
  \cong
  \lie N_\pi^N\otimes \lie{witt}_d^{\otimes_\pi n}.
\]
Consequently the order-$N$ jet associated graded along $\Delta_\pi$ is
\[
  \operatorname{Hom}_{\lie{gl}_d}
  \left(
  \lie N_\pi^N\otimes \lie{witt}_d^{\otimes_\pi n},
  \C
  \right).
\]

In the notation of the jet filtration on
$\operatorname{gr}^\Delta_k C^n_{(0)}$, we set $N=n-q$. Thus the $q$th jet associated graded corresponds to normal order $n-q$ along the partial diagonal.
We apply lemma \ref{lem:diag} to arrive at the desired expression.
\end{proof}

\paragraph
As a $\lie{gl}_d$-representation, we write
\[
  \lie{witt}_d
  =
  \cJ_d\otimes \lie q,
\]
where $\cJ_d$ denotes the formal function factor and $\lie q$ is the defining representation. Thus
\[
  \lie{witt}_d^{\otimes_\pi n}
  =
  \cJ_d^{\otimes_\pi n}\otimes \lie q^{\otimes n}.
\]
It follows from lemma~\ref{lem:grgr} that
\[
\operatorname{gr}^{\operatorname{jet}}_q
\operatorname{gr}^{\Delta}_k C^n_{(0)}
\cong
\operatorname{Alt}_n
\left(
\bigoplus_{\pi\in \operatorname{Part}(I,k)}
\operatorname{Hom}_{\lie{gl}_d}
\left(
\lie N_\pi^{n-q}\otimes \lie q^{\otimes n},
\,
\frac{
\operatorname{Hom}\big(\cJ_d^{\otimes_\pi n},\C\big)}
{
\operatorname{Hom}\big((\cJ_d^{\otimes_\pi n})^{\widehat\Delta_{\pi,k-1}},\C\big)}
\right)
\right).
\]
Here all linear duals are continuous duals. In particular, the duals of the completed $\cJ_d$-modules decompose as direct sums of finite-dimensional $\lie{gl}_d$-representations.

Recall that $\lie N_\pi^{n-q}$ is the degree $n-q$ part of the algebra of polynomial functions in the normal directions to the product of small diagonals determined by $\pi$. More explicitly, if $B_1,\ldots,B_k$ are the blocks of $\pi$, then
\[
  \lie N_\pi^m
  =
  \bigoplus_{m_1+\cdots+m_k=m}
  \bigotimes_{j=1}^k
  \operatorname{Sym}^{m_j}
  \left(
  \left(
  \lie q^{\oplus |B_j|}/\lie q_{\Delta_j}
  \right)^\vee
  \right).
\]
We compute this representation using the Koszul resolution of the quotient by the blockwise diagonal subspaces. Namely, for each $m$ there is a resolution
\[
  \lie N_\pi^m
  \simeq
  \left(\fK^{m,\bullet}_\pi,\d_K\right),
\]
where
\[
  \fK^{m,-r}_\pi
  =
  \bigoplus_{\substack{
  m_1+\cdots+m_k=m\\
  r_1+\cdots+r_k=r
  }}
  \bigotimes_{j=1}^k
  \left(
  \Lambda^{r_j}\lie q^\vee_{\Delta_j}
  \otimes
  \operatorname{Sym}^{m_j-r_j}
  \left(
  (\lie q^{\oplus |B_j|})^\vee
  \right)
  \right).
\]
Here $\lie q^\vee_{\Delta_j}$ denotes the linear forms cutting out the diagonal copy of $\lie q$ inside $\lie q^{\oplus |B_j|}$. Equivalently, it is the kernel of the restriction map
\[
  (\lie q^{\oplus |B_j|})^\vee
  \to
  \lie q^\vee.
\]
The differential is the usual Koszul differential. If
\[
  \alpha_j=x_1\wedge\cdots\wedge x_{r_j}
  \in \Lambda^{r_j}\lie q^\vee_{\Delta_j},
\]
then
\[
\begin{aligned}
&\d_K\left(
(\alpha_1\otimes f_1)\otimes\cdots\otimes(\alpha_k\otimes f_k)
\right) \\
&\define
\sum_{j=1}^k
(-1)^{r_1+\cdots+r_{j-1}}
(\alpha_1\otimes f_1)\otimes\cdots\otimes
\left[
\sum_{a=1}^{r_j}
(-1)^{a+1}
\left(
\iota_{x_a}\alpha_j
\otimes
x_a f_j
\right)
\right]
\otimes\cdots\otimes
(\alpha_k\otimes f_k).
\end{aligned}
\]
This is the Koszul differential for the regular sequence of linear functions cutting out the blockwise diagonal.

Substituting this resolution into the previous expression gives a complex quasi-isomorphic to
\[
\operatorname{Alt}_n
\left(
\bigoplus_{\pi\in \operatorname{Part}(I,k)}
\operatorname{Hom}_{\lie{gl}_d}
\left(
\fK^{n-q,\bullet}_\pi\otimes \lie q^{\otimes n},
\,
\frac{
\operatorname{Hom}\big(\cJ_d^{\otimes_\pi n},\C\big)}
{
\operatorname{Hom}\big((\cJ_d^{\otimes_\pi n})^{\widehat\Delta_{\pi,k-1}},\C\big)}
\right)
\right).
\]
The Chevalley--Eilenberg differential only contains brackets among inputs lying in the same block of $\pi$; brackets between different blocks vanish in the associated graded for the diagonal filtration. Therefore the $\lie q$- and normal-direction part identifies, block by block, with the reduced continuous Chevalley--Eilenberg complex of $\lie w_d$. Thus the preceding expression may be rewritten as
\[
\operatorname{Alt}_n
\left(
\bigoplus_{\pi\in \operatorname{Part}(I,k)}
\frac{
\operatorname{Hom}
\left(
\cJ_d^{\otimes_\pi n}\otimes
\bigotimes_{B\in\pi}\Lambda^\bullet \lie q^\vee,
\C
\right)}
{
\operatorname{Hom}
\left(
(\cJ_d^{\otimes_\pi n})^{\widehat\Delta_{\pi,k-1}}\otimes
\bigotimes_{B\in\pi}\Lambda^\bullet \lie q^\vee,
\C
\right)}
\otimes
\overline C^\bullet_{\mathrm{Lie}}(\lie w_d)^{\otimes \pi}
\right)^{\lie{gl}_d}.
\]
Here $\overline C^\bullet_{\mathrm{Lie}}(\lie w_d)^{\otimes \pi}$ means the tensor product over the blocks of $\pi$.

Next, multiplication in each block gives a natural quasi-isomorphism
\[
  \cJ_d^{\otimes_\pi n}
  =
  \bigotimes_{B\in\pi}
  \cJ_d^{\otimes_{\C\llbracket z\rrbracket}|B|}
  \to
  \bigotimes_{B\in\pi}\cJ_d.
\]
Indeed, from proposition \ref{prop:flatness} the dg ring $\cJ_d$ is flat over $\C\llbracket z\rrbracket$, and the multiplication map
\[
  \cJ_d\otimes_{\C\llbracket z\rrbracket}\cJ_d
  \to
  \cJ_d
\]
is a quasi-isomorphism.
Hence, after choosing an ordering of the blocks of $\pi$, the above complex is quasi-isomorphic to
\[
\operatorname{Alt}_k
\left(
\frac{
\operatorname{Hom}
\left(
\cJ_d^{\otimes k}\otimes (\Lambda^\bullet\lie q^\vee)^{\otimes k},
\C
\right)}
{
\operatorname{Hom}
\left(
(\cJ_d^{\otimes k})^{\widehat\Delta_{k-1}}\otimes
(\Lambda^\bullet\lie q^\vee)^{\otimes k},
\C
\right)}
\otimes
\overline C^\bullet_{\mathrm{Lie}}(\lie w_d)^{\otimes k}
\right)^{\lie{gl}_d}.
\]

The quotient
\[
  \frac{
\operatorname{Hom}
\left(
\cJ_d^{\otimes k}\otimes (\Lambda^\bullet\lie q^\vee)^{\otimes k},
\C
\right)}
{
\operatorname{Hom}
\left(
(\cJ_d^{\otimes k})^{\widehat\Delta_{k-1}}\otimes
(\Lambda^\bullet\lie q^\vee)^{\otimes k},
\C
\right)}
\]
is the algebraic model for relative de Rham current homology
\[
  H^{rel}_\bullet(k,k-1)
  \simeq
  H^{dR}_\bullet
  \left(
  \mathring D_d^k,
  \mathring D_d^{k-1}
  \right).
\]
More precisely, it is the quotient complex associated to the inclusion of currents supported on the lower diagonal into currents supported on the $k$-fold configuration. Equivalently, it is quasi-isomorphic to the mapping cone of this inclusion, and hence computes the relative current homology of the pair.
Notice that we use cohomological conventions so the $k$th homology sits in cohomological degree $-k$.

We will use the following three facts about this relative homology:
\begin{enumerate}
  \item For $k=1$, $H^{\mathrm{rel}}_\bullet(1,0)$ is one-dimensional and concentrated in cohomological degree $-2d+1$.
  \item For $k>1$, $H^{\mathrm{rel}}_\bullet(k,k-1)$ is concentrated in cohomological degrees greater than or equal to
    $k(-2d+1)$.
  \item The $\lie w_d$-action on $H^{\mathrm{rel}}_\bullet(k,k-1)$ is trivial. This follows from the Cartan homotopy formula.
\end{enumerate}
It follows that the associated graded contribution is
\begin{equation}\label{eq:E1gr}
  \operatorname{Alt}_k
  \left(
    H^{rel}_\bullet(k,k-1)
  \otimes
  \overline H^\bullet_{Lie}(\lie w_d)^{\otimes k}
  \right).
\end{equation}

We now inspect total degree two cohomology $\HH^2_{Lie}(\lie{witt}_d)$. 
The diagonal filtration gives a spectral sequence whose differentials have
cohomological degree $+1$ and increase the diagonal index.
We use the further jet filtration to compute the $E_1$ page, and have expressed its jet-associated graded in
\eqref{eq:E1gr}. 
For $k=1$, the relative current homology is nontrivial in cohomological degree $-2d+1$,
so the Lie algebra cohomology factor must contribute degree $2d+1$. 
The $k=1$ contribution is thus
\[
  H^{2d+1}_{Lie}(\lie w_d).
\]

For $k>1$, the total cohomological degree is bounded below by
\[
  k(2d+1)+k(-2d+1)=2k>2.
\]
Thus no term with $k>1$ contributes to total degree two.
Hence the $k=1$ term survives to $E_\infty$ and the spectral sequence converges.
We arrive at the following.

\begin{theorem}\label{thm:local}
The total degree two part of the diagonal spectral sequence converging to the Lie algebra hypercohomology of the
$\lie{witt}_d$ is concentrated in the local $k=1$ term. Consequently,
\[
  \HH^2_{Lie}(\lie{witt}_d)
  \cong
  H^2(C^\bullet_{(0)})
  \cong
  H^{2d+1}_{Lie}(\lie w_d).
\]
\end{theorem}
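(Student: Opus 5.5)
The plan is to assemble the theorem from the diagonal spectral sequence set up above. First, the passage $C^\bu_{Lie}(\lie{witt}_d)\simeq C^\bu_{(0)}$ is a quasi-isomorphism because $GL_d$ is reductive and acts semisimply on each weight piece of the continuous cochains. Concretely, averaging over a compact form of $GL_d$ gives a retraction onto invariants that commutes with $\dbar+\bd^{(2)}$. This yields the first isomorphism $\HH^2_{Lie}(\lie{witt}_d)\cong H^2(C^\bu_{(0)})$.

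Next I will check that the diagonal filtration $\Delta_\bu C^\bu_{(0)}$ is compatible with the total differential. The internal differential $\dbar$ preserves supports. The bracket part $\bd^{(2)}$ sends an $n$-cochain supported near $k$-block diagonals to an $(n+1)$-cochain supported near diagonals with at most $k$ blocks, because the two bracketed inputs are forced into the same block. So each $\Delta_k$ is a subcomplex, and we get a spectral sequence. Its convergence in a fixed total degree is where exhaustiveness enters. For $GL_d$-invariant cochains the Euler weight bounds the jet order, so in each total degree only finitely many $k$ and $q$ are involved. The filtration is then finite degreewise and the spectral sequence converges.

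The $E_1$ computation is taken from \eqref{eq:E1gr}. First apply lemma \ref{lem:grgr} and the Koszul resolution of the normal jets. Then apply the quasi-isomorphism $\cJ_d^{\otimes_{\C\llbracket z\rrbracket}m}\to\cJ_d$, which uses proposition \ref{prop:flatness} (flatness plus $\op{Tor}$-vanishing). Finally use the three facts on $H^{rel}_\bu(k,k-1)$. The result is that $E_1^{k}$ in total degree $N$ is a sum of terms $H^{rel}_{a}(k,k-1)\otimes \overline H^{b_1}\otimes\cdots\otimes\overline H^{b_k}$ with $a+\sum b_i=N$.

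The degree count then runs as follows. By Gelfand--Fuks, the reduced cohomology $\overline H^\bu_{Lie}(\lie w_d)$ vanishes in degrees $\le 2d$, so each $b_i\ge 2d+1$. By fact (2), $a\ge k(1-2d)$. Hence $N\ge 2k$, and for $k\ge2$ nothing survives in degrees $\le 3$, in particular in degrees $1$, $2$ and $3$. For $k=1$, fact (1) gives $E_1^{1}$ in degree $2$ equal to $H^{2d+1}_{Lie}(\lie w_d)$. Differentials out of the $k=1$ row land in $k\ge2$ terms of total degree $3<4$, which vanish. Differentials into it come from $k\ge 2$ terms of degree $1$, which also vanish. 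So $E_\infty=E_1$ in degree $2$, and the second isomorphism follows.

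The main obstacle I expect is justifying fact (2) together with the Gelfand--Fuks vanishing in the reduced, block-tensored form. In particular, one has to make sure the alternation $\op{Alt}_n$ and the identification of the normal-direction complex with $\overline C^\bu_{Lie}(\lie w_d)$ block by block do not reintroduce low-degree classes. For fact (2) I would argue by induction on $k$. The stratum $\mathring D^k_d\setminus(\text{lower diagonals})$ is built by iterated punctured-disk fibrations, and each factor contributes currents in degrees $\ge 1-2d$.
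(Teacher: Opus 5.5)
Your proposal follows the paper's argument essentially step for step: you restrict to $GL_d$-invariant cochains, use the diagonal filtration refined by jet order together with the Koszul and flatness reductions and the three facts on $H^{rel}_\bullet(k,k-1)$ to reach the $E_1$ page \eqref{eq:E1gr}, and conclude from the count $k(2d+1)+k(1-2d)=2k>2$, where you usefully make explicit the Gelfand--Fuks vanishing $\overline H^{\leq 2d}_{Lie}(\lie w_d)=0$ that the paper's bound $k(2d+1)$ uses silently. One caution: your convergence justification is not right as stated, since for $d\geq 2$ arguments from $\lie{witt}_d^1$ contribute zero to total degree, so in a fixed total degree the arity, and hence $k$, is unbounded; what is actually needed is only that the increasing filtration $\Delta_\bullet$ is bounded below ($\Delta_0=0$) and exhaustive, which the paper simply asserts.
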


It is a classical result of Fuks that the Lie algebra cohomology of $\lie{w}_d$ is isomorphic to the de Rham cohomology
of the total space $Y_d$ of the restriction of the universal $U(d)$-bundle to the $2d$-skeleton of $BGL_d = Gr_{d,
\infty}$:
\begin{equation}\label{}
  \begin{tikzcd} 
    Y_d \ar[r] \ar[d] & \star \ar[d] \\
    \op{sk}_{2d} \ar[r] & BGL_d .
  \end{tikzcd}
\end{equation}
By the degeneration of the Serre spectral sequence we therefore have 
\begin{equation}\label{}
  H^{2d+1}_{Lie}(\lie{w}_d) \cong H^{2d+1} (Y_d) \cong H^{2d+2}(BGL_d) .
\end{equation}
We have already proved in theorem \ref{thm:cwd-injective} that the universal Chern--Weil homomorphism 
\begin{equation}\label{}
  \mathsf{cw}_{d} \colon \C[y_1,\ldots,y_d]_{d+1} \hookrightarrow \HH^{2}_{Lie}(\lie{witt}_d)
\end{equation}
is injective. 
We now know since dimensions match that this is an isomorphism.

\paragraph
The preceding proof extends directly to the case $\lie{witt}_d\ltimes \lie{gl}_r(\cJ_d)$. In fact, we can use the Serre–Hochschild spectral sequence. Define the filtration
\[
F^p_{\operatorname{Serre-Hoch}}C^n_{(0)}:=\{c\in C^n_{(0)}\mid c(\xi_1,\dots,\xi_n)=0\ \text{for}\ \xi_1,\dots,\xi_{n-p+1}\in \lie{witt}_d\}.
\]
This yields
\[
\operatorname{gr}^{\operatorname{Serre-Hoch}}_p\operatorname{gr}^{\operatorname{jet}}_q
\operatorname{gr}^{\Delta}_k C^n_{(0)}
\cong
\operatorname{Alt}_n
\left(
\bigoplus_{\pi\in \operatorname{Part}(I,k)}
\operatorname{Hom}_{\lie{gl}_d}
\left(
\lie N_\pi^{n-q-p}\otimes \lie q^{\otimes n-p}\otimes \lie{gl}_r^{\otimes p},
\,
\frac{
\operatorname{Hom}\big(\cJ_d^{\otimes_\pi n},\C\big)}
{
\operatorname{Hom}\big((\cJ_d^{\otimes_\pi n})^{\widehat\Delta_{\pi,k-1}},\C\big)}
\right)
\right).
\]
By repeating the previous argument, we obtain a complex that is quasi-isomorphic to
\[
\operatorname{Alt}_k
\left(
\frac{
\operatorname{Hom}
\left(
\cJ_d^{\otimes k}\otimes (\Lambda^\bullet\lie q^\vee)^{\otimes k},
\C
\right)}
{
\operatorname{Hom}
\left(
(\cJ_d^{\otimes k})^{\widehat\Delta_{k-1}}\otimes
(\Lambda^\bullet\lie q^\vee)^{\otimes k},
\C
\right)}
\otimes
\overline C^\bullet_{\mathrm{Lie}}(\lie w_d;\operatorname{Hom}(\Lambda^p\lie{gl}_r(\C\llbracket z\rrbracket),\C))^{\otimes k}
\right)^{\lie{gl}_d}.
\]
Observe that one factor in this tensor product can be identified with the $E_0$-page of the Serre–Hochschild spectral sequence associated to the inclusion $\lie{witt}_d\subset\lie{witt}_d \ltimes\lie{gl}_r(\cJ_d)$. This leads us to the following theorem.

\begin{theorem}\label{thm:localglr}
The total degree two part of the diagonal spectral sequence is contributed only by the local term $k=1$. Consequently,
\[
  \HH^2_{Lie}(\lie{witt}_d \ltimes\lie{gl}_r(\cJ_d))
  \cong
  H^2(C^\bullet_{(0)})
  \cong
  H^{2d+1}_{Lie}(\lie w_d \ltimes\lie{gl}_r(\C\llbracket z\rrbracket)).
\]
\end{theorem}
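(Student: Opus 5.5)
We run the same diagonal spectral sequence as in the proof of theorem \ref{thm:local}, now refined by the Serre--Hochschild filtration $F^p_{\operatorname{Serre-Hoch}}$, and show that in total degree two only the local stratum $k=1$ survives. The first step is to check that $F^p_{\operatorname{Serre-Hoch}}$ is compatible with the diagonal and jet filtrations. All three are defined by support or vanishing conditions on the inputs, and the bracket $[\lie{witt}_d, \lie{gl}_r(\cJ_d)] \subset \lie{gl}_r(\cJ_d)$ preserves the Serre--Hochschild condition. Hence the Chevalley--Eilenberg differential respects all three filtrations, and the triple associated graded displayed above is legitimate.

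The second step repeats the reductions of lemmas \ref{lem:diag}, \ref{lem:normaljet} and \ref{lem:grgr}, carrying along the extra $\lie{gl}_r$ tensor factors. The Koszul resolution of the normal jets $\lie N_\pi^{\bullet}$ only involves the $\lie q$-inputs; the $\lie{gl}_r$-inputs are plain $\cJ_d$-valued matrices with no normal derivatives. We then contract $\cJ_d^{\otimes_\pi n}\to \bigotimes_{B\in\pi}\cJ_d$ using flatness (proposition \ref{prop:flatness}). As in the Witt case, brackets between different blocks die in $\operatorname{gr}^\Delta$. Within a single block, the $\lie q$- and normal-direction data together with the $\lie{gl}_r\otimes\C\llbracket z\rrbracket$-inputs reassemble into the reduced continuous Chevalley--Eilenberg complex of $\lie w_d\ltimes\lie{gl}_r(\C\llbracket z\rrbracket)$. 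This gives the displayed complex with coefficients $\operatorname{Hom}(\Lambda^p\lie{gl}_r(\C\llbracket z\rrbracket),\C)$, which assemble over $p$ into $\overline C^\bullet_{Lie}(\lie w_d\ltimes \lie{gl}_r(\C\llbracket z\rrbracket))$.

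The third step is to invoke the three facts about $H^{rel}_\bullet(k,k-1)$ exactly as before: it is one-dimensional in degree $-2d+1$ for $k=1$, concentrated in degrees $\geq k(-2d+1)$ for $k>1$, and carries a trivial $\lie w_d$-action. Triviality of the $\lie{gl}_r(\C\llbracket z\rrbracket)$-action is automatic, since that algebra acts only on the coefficient factor. The $E_1$ term is therefore $\operatorname{Alt}_k\bigl(H^{rel}_\bullet(k,k-1)\otimes \overline H^\bullet_{Lie}(\lie w_d\ltimes\lie{gl}_r(\C\llbracket z\rrbracket))^{\otimes k}\bigr)$.

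The main obstacle is the degree count for $k>1$. In the Witt case we used that $\overline H^\bullet_{Lie}(\lie w_d)$ vanishes below degree $2d+1$. Here we need the same Gelfand--Fuks vanishing for the semidirect product, namely $\overline H^{j}_{Lie}(\lie w_d\ltimes\lie{gl}_r(\C\llbracket z\rrbracket))=0$ for $j\le 2d$. I would prove this with the Hochschild--Serre spectral sequence for the ideal $\lie{gl}_r(\C\llbracket z\rrbracket)$. Its cohomology is computed by Loday--Quillen--Tsygan/Feigin type results as the invariant polynomials on $\lie{gl}_r$ tensored with $\lie{w}_d$-equivariant data. This combines with $H^\bullet(\lie w_d)$ into the truncated Weil-algebra model (the cohomology of $Y_d$ twisted by the $\lie{gl}_r$ characteristic classes), whose reduced part vanishes below degree $2d+1$. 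Granting this, each $k>1$ term has total degree at least $k(2d+1)+k(-2d+1)=2k>2$. So only $k=1$ contributes to total degree two, the spectral sequence degenerates there, and we get $\HH^2_{Lie}(\lie{witt}_d\ltimes\lie{gl}_r(\cJ_d))\cong H^{2d+1}_{Lie}(\lie w_d\ltimes\lie{gl}_r(\C\llbracket z\rrbracket))$.
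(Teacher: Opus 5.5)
Your proposal follows the paper's proof: the diagonal and jet filtrations refined by the Serre--Hochschild filtration, the same reduction via flatness of $\cJ_d$ to $\operatorname{Alt}_k\bigl(H^{rel}_\bullet(k,k-1)\otimes \overline H^\bullet_{Lie}(\lie w_d\ltimes\lie{gl}_r(\C\llbracket z\rrbracket))^{\otimes k}\bigr)$, and the same count $2k>2$ for $k>1$. You also usefully make explicit the vanishing $\overline H^{j}_{Lie}(\lie w_d\ltimes\lie{gl}_r(\C\llbracket z\rrbracket))=0$ for $j\le 2d$, which the paper's ``repeating the previous argument'' uses without stating; two small corrections are that this vanishing should be quoted from Khoroshkin's theorem, i.e.\ the identification with $H^\bullet(Y_{d,r})$ the paper invokes right afterwards, whose truncated Weil-algebra model is acyclic in degrees $1,\ldots,2d$, rather than from a Loday--Quillen--Tsygan-type sketch (that result is a stable-range statement, and $H^\bullet(\lie{gl}_r(\C\llbracket z\rrbracket))$ is not just invariant polynomials: already its $H^1$ is the continuous dual of $\C\llbracket z\rrbracket$ via the trace), and that the matrix inputs \emph{do} carry normal jets, which is exactly where the $\C\llbracket z\rrbracket$ in $\lie{gl}_r(\C\llbracket z\rrbracket)$ comes from.
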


\paragraph
In \cite{Khoroshkin} it is shown that there is a graded ring isomorphism
\begin{equation}
  H^\bu_{Lie}(\lie{w}_d \ltimes \lie{gl}_r(\C\llbracket z\rrbracket) \; | \; \lie{gl}_d \oplus \lie{gl}_r) \simeq H^{\bu \leq
  2d} \left(B GL_d \times BGL_r\right) ,
\end{equation}
where the right hand side is the cohomology of the $2d$-skeleton of the product of classifying spaces.
In this theorem appears the relative Lie algebra cohomology with respect to the subalgebra
\begin{equation}\label{eqn:subalg}
  \lie{gl}_d \oplus \lie{gl}_r \subset \lie{w}_d \ltimes \lie{gl}_r(\C\llbracket z \rrbracket) 
\end{equation}
consisting of linear vector fields and constant $\lie{gl}_r$-valued power series.

Let $Y_{d,r}$ be the restriction of the universal bundle over the $2d$-skeleton of $B GL_d \times BGL_r$.
In terms of classifying spaces, the isomorphism of Khoroshkin takes the form
\begin{equation}
  H^\bu(\lie{w}_d \ltimes \lie{gl}_r(\llbracket z \rrbracket) \cong H^\bu (Y_{d,r})
\end{equation}
See \cite{BWac} for a sketch how to deduce this isomorphism from \cite{Khoroshkin}.
By the degeneration of the Serre spectral sequence, it follows that there is an isomorphism
\begin{equation}
  H_{Lie}^{2d+1}(\lie{w}_d \ltimes \lie{gl}_r(\llbracket z \rrbracket) \cong H^{2d+2} (BGL_d \times BGL_r) .
\end{equation}

Again, since dimensions match we can combine theorem \ref{thm:localglr} with theorem \ref{thm:cwdr} to see that 
\begin{equation}\label{}
  \mathsf{cw}_{d,r} \colon \C[y_1,\ldots,y_d,\lie{t}_1,\ldots,\lie{t}_r]_{d+1} \to \HH^2_{Lie}(\lie{witt}_d \ltimes
  \lie{gl}_r (\cJ_d))  
\end{equation}
is an isomorphism. 

\subsection{Relation to Lie algebra homology of differential operators}\label{ss:cohDiff}

In this section we characterize the image of the homomorphism
\[
  \HH^{Lie}_2(\lie{witt}_d)\to \HH^{Lie}_2(\cD_d)
\]
induced by the natural map of dg Lie algebras $\lie{witt}_d\to \cD_d$.

\begin{theorem}\label{thm:1dImage}
  The image of $\HH^{Lie}_2(\lie{witt}_d)$ in $\HH^{Lie}_2(\cD_d)$ is one-dimensional.
\end{theorem}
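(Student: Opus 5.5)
We argue dually, in cohomology: the image of $\HH^{Lie}_2(\lie{witt}_d)\to\HH^{Lie}_2(\cD_d)$ has the same dimension as the image of the restriction map
\[
  \iota^*\colon \HH^2_{Lie}(\cD_d)\to \HH^2_{Lie}(\lie{witt}_d),
\]
since the pairing between (hyper)homology and continuous cohomology is perfect on these degreewise finite, $GL_d$-invariant pieces. There are two steps: compute $\HH^2_{Lie}(\cD_d)$, and identify its image in $\HH^2_{Lie}(\lie{witt}_d)$.

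\textbf{Step 1: the source is one-dimensional.} We would show $\HH^2_{Lie}(\cD_d)$ is one-dimensional, generated by the higher-dimensional Feigin--Tsygan type cocycle. This follows the pattern of the result of \cite{FHK} recalled in the introduction: for the dg Tate algebra $\op{End}(\cV)$, and for $\lie{gl}$ of an associative algebra generally, $\HH^2_{Lie}$ is controlled via Loday--Quillen--Tsygan by cyclic (co)homology in the appropriate degree. For $\cD_d$, cyclic homology of differential operators on the punctured disk is computed, following Wodzicki and Feigin--Tsygan, by de Rham cohomology of $T^*\mathring{D}^d$. In the relevant degree this is one-dimensional, given by the residue class $[P]\in H^{d-1}$ paired with the symplectic volume. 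Concretely, the generator is the cocycle obtained by restricting the class $[z^\1\wedge\cdots\wedge z^{\dd}\wedge P]$ along the tautological representation $\cD_d\to\op{End}(\cJ_d)$; we take this as the preferred generator.

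\textbf{Step 2: the image is nonzero.} The composite $\lie{witt}_d\to\cD_d\to\op{End}(\cJ_d)$ is exactly $\mathsf{L}_V$ for the trivial tensor module $V=\C$. Its pullback of the generator is the class treated by the Grothendieck--Riemann--Roch statement of section \ref{s:GRR}, namely $\op{Td}\op{Ch}(\C)$ in degree $d+1$, which is the degree $d+1$ Todd class. Written via $\mathsf{cw}_d$, this is a nonzero polynomial in $\op{ch}_1,\ldots,\op{ch}_d$; for instance its $\op{ch}_1^{d+1}$ coefficient is a nonzero Bernoulli-type number. Since $\mathsf{cw}_d$ is injective by theorem \ref{thm:cwd-injective}, the pullback is a nonzero class. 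Together with step 1 this shows the image is exactly one-dimensional.

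To avoid depending on the later GRR section, one can instead evaluate the pulled-back cocycle directly on the cycle $\til{\mathbb{v}}_{(1,\ldots,1)}(d)$ from proposition \ref{prop:upper-triangular-pairing}. Only the leading Jouanolou-degree term contributes, and its value is a residue computation giving a nonzero multiple of $\op{Res}(P\,\d^d z)=1$.

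\textbf{Main obstacle.} The hardest step is step 1, the upper bound $\dim\HH^2_{Lie}(\cD_d)\le 1$. It needs a dg/Tate version of the Loday--Quillen--Tsygan argument for $\cD_d$ together with the cyclic homology computation of the Jouanolou model of differential operators. We would first try to reduce to the result of \cite{FHK} for $\op{End}(\cV)$: show that $\cD_d\hookrightarrow\op{End}(\cJ_d)$ induces an isomorphism on $\HH^2$, using the symbol (order) filtration on $\cD_d$, whose associated graded is the Poisson algebra of functions on $T^*\mathring{D}^d$, where the computation reduces to Hochschild--Kostant--Rosenberg.
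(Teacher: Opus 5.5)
\textbf{There is a genuine gap in your Step 1, which carries the whole upper bound.} You set out to prove the strictly stronger statement $\dim \HH^2_{Lie}(\cD_d)=1$. The paper explicitly declines to prove this for general $d$, and your sketch does not establish it.

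\begin{itemize}
\item Loday--Quillen--Tsygan computes the homology of $\lie{gl}_\infty(A)$, not of $A$ viewed as a Lie algebra. Applying it to $\cD_d$ would need a Morita-type stabilization for the Jouanolou model, plus a cyclic-homology computation for that model. You supply neither.
\item The hyperhomology $\HH_2(\cD_d)$ receives contributions from several pieces of the $\dbar$-cohomology spectral sequence. Besides $\HH_d(H^0_{\dbar}(\cD_d);H^{d-1}_{\dbar}(\cD_d))$ there are, for example, $H_2(H^0_{\dbar}(\cD_d))$ and terms with two or more factors from $H^{d-1}_{\dbar}(\cD_d)$. All of these would have to be controlled.
\item Your proposed reduction to \cite{FHK} does not reduce anything. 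Showing that $\cD_d\to\op{End}(\cJ_d)$ is an isomorphism on $\HH^2$ amounts to bounding $\HH^2(\cD_d)$ itself: knowing the target is one-dimensional gives no upper bound on the source. Moreover, $\op{End}(\cJ_d)$ has no symbol filtration compatible with the one on $\cD_d$, so the order-filtration/HKR computation is again just a direct computation of $\HH^2(\cD_d)$.
\end{itemize}

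\textbf{The missing idea is that you only need to bound the part of $\HH_2(\cD_d)$ where the image lands.} The cycles $\til{\mathbb{v}}_\lambda(d)$ spanning $\HH_2(\lie{witt}_d)$ have leading terms $P\,\partial_{1}\wedge\mathbb{w}$, with one factor in $H^{d-1}_{\dbar}$ and $d$ factors in $H^0_{\dbar}$. So their images live in the summand coming from $\HH_d(H^0_{\dbar}(\cD_d);H^{d-1}_{\dbar}(\cD_d))$. The paper shows this summand is at most one-dimensional, as follows:
\begin{itemize}
\item The order filtration reduces the problem to the Poisson computation $\HH_d(\lie o;\lie n)$, with $\lie o=\C[z,p]$ and $\lie n=\C[p]\otimes\cP_d$.
\item Filter by the number of factors outside $\op{span}\{z^1,\ldots,z^d\}$. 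The $E^0$-page is a Koszul complex for the operators $\partial/\partial p_s$, after conjugating by $\exp(-\sum_s p_s\theta^s)$, where $\theta^s$ is the action of $z^s$ on the remaining factors. Hence $E^1$ is concentrated on the line $b=d$.
\item The $d^1$ differential, coming from $p_s$ acting by $\partial/\partial z^s$, kills $\sum_s\partial_{z^s}\cP_d$. This leaves $\cP_d/\sum_s\partial_{z^s}\cP_d\cong\C\,P(d)$.
\item Any degree-$d$ cycle of positive order is then pushed down to order zero.
\end{itemize}

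\textbf{Your nontriviality argument also needs repair.} Your primary route via section \ref{s:GRR} is circular within this paper: theorem \ref{thm:grrD} is deduced from theorem \ref{1dImageMatrix}, which rests on the statement you are proving. Your fallback, directly evaluating the pulled-back Tate cocycle on $\til{\mathbb{v}}_{(1,\ldots,1)}(d)$, is asserted rather than computed. That evaluation is essentially the one-loop computation behind the Grothendieck--Riemann--Roch theorem. The paper instead argues that the image is a nonzero multiple of the symbol class $P(d)\otimes z^1\wedge\cdots\wedge z^d$. This maps to the class $[z^1\wedge\cdots\wedge z^d\wedge P]$ in $\HH_2(\op{End}\cJ_d)$, whose nontriviality is supplied by \cite{FHK}.
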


For $d=1$, it is a classical result that $\HH^{Lie}_2(\cD_1)$ is one-dimensional \cite{li1998central}. For $d=2$, we
proved in \cite{GWvir2d} that $\HH^{Lie}_2(\cD_2)$ is one-dimensional. 
We do not prove here that the full space
$\HH^{Lie}_2(\cD_d)$ is one-dimensional for arbitrary $d$. Instead, we prove the weaker statement above, which is sufficient for the local Grothendieck--Riemann--Roch theorem.

The proof proceeds in three steps. First, we compute a principal-symbol version of the relevant Lie algebra homology.
This is a Poisson approximation to the Lie algebra homology of differential operators with their commutator bracket.
Second, we reduce the principal-symbol computation to the homology of a Koszul complex. Finally, we use the order
filtration on differential operators to compare the principal-symbol calculation with the image of $\HH^{Lie}
_2(\lie{witt}_d)$ in $\HH^{Lie}_2(\cD_d)$.

\paragraph
Let
\[
  \lie{o}\define \C[z^{\1},\ldots,z^{\dd},p_{\1},\ldots,p_{\dd}]
\]
be the Poisson algebra of polynomial functions on $\T^*\AA^d$, equipped with the standard Poisson bracket
\[
  \{f,g\}
  =
  \sum_{\s=1}^d
  \left(
    \frac{\partial f}{\partial p_\s}\frac{\partial g}{\partial z^\s}
    -
    \frac{\partial f}{\partial z^\s}\frac{\partial g}{\partial p_\s}
  \right).
\]
Recall that the $(d-1)$st cohomology of the Jouanolou model $\cJ_d$ is
\[
  \cP_d\define H_{\dbar}^{d-1}(\cJ_d)
  \cong
  \op{S}(\lie q)\otimes \det(\lie q).
\]
Set
\[
  \lie n\define \C[p_{\1},\ldots,p_{\dd}]\otimes \cP_d.
\]
We regard $\lie n$ as the $(d-1)$st cohomology of the structure sheaf on $\T^*(\AA^d-\{0\})$. The Poisson algebra $\lie o$ acts on $\lie n$ by Hamiltonian vector fields:
\[
  f\cdot G
  =
  \sum_{\s=1}^d
  \left(
    \frac{\partial f}{\partial p_\s}\frac{\partial G}{\partial z^\s}
    -
    \frac{\partial f}{\partial z^\s}\frac{\partial G}{\partial p_\s}
  \right).
\]

\begin{lemma}\label{lem:principal-symbol-homology}
  The Lie algebra homology $\HH^{Lie}_d(\lie o;\lie n)$ is at most one-dimensional.
\end{lemma}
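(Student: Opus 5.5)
Following the outline announced before the statement, I would reduce $\HH^{Lie}_d(\lie o;\lie n)$ to a Koszul-type computation.

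First I would exploit the $\lie{gl}_d$-action and the Euler grading. The subalgebra of $\lie o$ spanned by the bilinear elements $z^\s p_\jj$ is a copy of $\lie{gl}_d$ acting on $\lie n$ by its natural action. It acts trivially on Lie algebra homology, so by reductivity (a Hochschild--Serre argument relative to $\lie{gl}_d$) the homology is computed by the $\lie{gl}_d$-coinvariant part of $\wedge^\bu\lie o\otimes\lie n$. In particular, it suffices to consider chains of total Euler weight zero, where $z$ has weight $+1$, and $p$ and the generators of $\cP_d$ have weight $-1$. The generator $[P]$ of $\cP_d$ has weight $-d$, carrying $\det(\lie q)$. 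This leaves, in each homological degree, a finite-dimensional graded piece.

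Second, I would split $\lie o=\lie h\oplus\lie o'$, where $\lie h=\C[z]$ is the abelian subalgebra of functions of $z$ alone. Elements $f(z)\in\lie h$ act on $\lie n$ by $-\sum_\s \partial_{z^\s}f\,\partial_{p_\s}$. Filtering by polynomial degree in $p$ (the order filtration on symbols), the associated graded reduces the bracket to an action of the abelian Lie algebra spanned by the linear functions $z^\s$ and the constants $p_\s$. These act on $\lie n$ by $-\partial_{p_\s}$ and by $\partial_{z^\s}$ respectively, and $\partial_{z^\s}$ acts on $\cP_d\cong\op{S}(\lie q)\otimes\det\lie q$ as multiplication by the dual generator. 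The relevant $E_1$ page is therefore the homology of a Koszul complex $\wedge^\bu(\C^{2d})\otimes\lie n$ for the $2d$ commuting operators $\partial_{p_\s}$ and $\partial_{z^\s}$. On $\C[p]\otimes\cP_d$ each family is a regular sequence of surjective operators. Hence the Koszul homology is concentrated in top exterior degree, and there it is one-dimensional, spanned by the class built from $[P]$ paired with $\wedge_\s z^\s\wedge_\s p_\s$ (the Poisson analogue of the residue pairing). Restricting to homological degree $d$ after the weight bookkeeping should leave only this class, or its images under higher-degree generators, in the relevant total degree.

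Third, I would run the spectral sequence associated to the polynomial-degree filtration on $\lie o$. The degree-$d$ part of $E_1$ should be at most one-dimensional after the weight-zero and $\lie{gl}_d$-invariance constraints. Since later differentials can only decrease dimension, the bound $\dim\HH^{Lie}_d(\lie o;\lie n)\le 1$ follows.

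I expect the main obstacle to be the middle step. One must control the contribution of higher-degree elements of $\lie o$, those quadratic and above in $(z,p)$, and show they do not create extra classes in degree $d$. My first attempt would be a Fuks-style argument: the Euler operator $\sum z^\s\partial_{z^\s}-\sum p_\s\partial_{p_\s}$ and the $\lie{gl}_d$ weights show that weight-zero invariant chains involving higher generators are forced into a subcomplex that is acyclic. This is the analogue of the vanishing $H_{>0}$ of formal Hamiltonian or vector fields with coefficients in truncated modules. If that proves too hard, I would instead compare directly with the known computation of the homology of Hamiltonian vector fields on $\T^*\AA^d$ with coefficients in the dual module $\lie n$, using Gelfand--Fuks type results.
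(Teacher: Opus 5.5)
\textbf{There is a genuine gap, and it is the one you flag yourself.} Nothing in your argument controls chains that contain elements of $\lie o$ other than $z^\s,p_\s$. The mechanism you propose for the middle step also does not work.

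\emph{The filtration.} Total $p$-degree is not a useful filtration here. Both the Poisson bracket and the Hamiltonian action on $\lie n$ lower it by exactly one. After shifting by the homological degree it is therefore a \emph{grading} preserved by the differential, so filtering by it either reproduces the whole complex or kills the differential entirely. Nothing collapses onto $\op{span}\{z^\s,p_\s\}$. That span is in any case a Heisenberg algebra ($\{p_\s,z^\jj\}=\delta_\s^\jj$), not an abelian one.

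\emph{The Koszul computation.} As you note, the $p_\s$ act on $\cP_d\cong\op{S}(\lie q)\otimes\det\lie q$ by multiplication by the generators of $\op{S}(\lie q)$. These operators are injective, not surjective, so their Koszul homology lies in exterior degree $0$ (the cokernel $\C\cdot P$). By contrast, the $z^\s$ act by $-\partial_{p_\s}$ on $\C[p]$, and their Koszul homology lies in exterior degree $d$ (the constants). The surviving class is therefore $P\otimes z^\1\wedge\cdots\wedge z^\dd$ in degree $d$. With your bookkeeping (top degree, involving $\wedge_\s p_\s$), the $E_1$ term would vanish in degree $d$.

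\emph{The weight argument.} The $\lie{gl}_d$-weight bookkeeping cannot substitute for the missing vanishing. The relevant weight-zero spaces, even after taking $\lie{gl}_d$-invariants, are infinite-dimensional; already $\lie o^{\lie{gl}_d}=\C[\sum_\s z^\s p_\s]$. So the finiteness claim in your first step is false. The assertion in your third step, that the degree-$d$ part of $E_1$ is at most one-dimensional, is unsupported.

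\textbf{The missing idea.} Filter relative only to the abelian subalgebra $\lie q^*=\op{span}\{z^\s\}$, by the number $a$ of factors in $\lie u=\lie o/\lie q^*$, and let $b$ be the number of factors in $\lie q^*$.
\begin{itemize}
\item The $E^0$ page is then the Koszul complex of the commuting operators $t^\s=-\partial_{p_\s}\otimes 1+1\otimes\theta^\s$ on $\lie n\otimes\wedge^a\lie u$. Here $\theta^\s$ is induced by $\{z^\s,-\}=-\partial_{p_\s}$, hence is locally nilpotent.
\item Conjugating by $\exp(-\sum_\s p_\s\theta^\s)$ removes the $\theta^\s$. So for \emph{every} $a$, the $E^1$ term is concentrated in $b=d$, with $E^1_{a,d}\cong\cP_d\otimes\wedge^a\lie u\otimes\wedge^d\lie q^*$.
\item In total degree $a+b=d$ this forces $a=0$, which disposes of all higher generators at once.
\item The remaining space $\cP_d\otimes\wedge^d\lie q^*$ is cut down to $\C\cdot P$ by the differential $\mathrm{d}^1$ from $E^1_{1,d}$. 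This uses $p_\s\cdot G=\partial_{z^\s}G$ and $\cP_d/\sum_\s\partial_{z^\s}\cP_d\cong\C$.
\end{itemize}
Your first step could do this last job instead, since $(\cP_d\otimes\wedge^d\lie q^*)^{\lie{gl}_d}\cong\op{S}(\lie q)^{\lie{gl}_d}=\C$. But without the vanishing of every $a\geq 1$ contribution in degree $d$, the bound does not follow.
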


\begin{proof}
Let
\[
  \lie q^*=\op{span}\{z^1,\ldots,z^d\}\subset \lie o
\]
be the abelian Lie subalgebra spanned by the coordinate functions, and let
\[
  \lie u\define \lie o/\lie q^*.
\]
As a graded vector space, the Chevalley--Eilenberg complex decomposes as
\[
  C^{Lie}_\bullet(\lie o;\lie n)
  =
  \bigoplus_{a,b\geq 0}
  \lie n\otimes \wedge^a \lie u\otimes \wedge^b\lie q^* .
\]
We filter $C^{Lie}_\bullet(\lie o;\lie n)$ by the number of factors lying in $\lie u$. The associated graded has
\[
  E^0_{a,b}
  =
  \lie n\otimes \wedge^a\lie u\otimes \wedge^b\lie q^* .
\]
The differential on $E^0$ is the Chevalley--Eilenberg differential for the abelian Lie algebra $\lie q^*$ with
coefficients in $\lie n\otimes \wedge^a\lie u$. The action of $z^\s\in \lie q^*$ on $\lie n$ is
\[
  z^\s\cdot G = \frac{\partial G}{\partial p_\s},
\]
and the action on $\wedge^a\lie u$ is induced by the projected Poisson bracket.

For $a=0$, the $E^0$-complex is
\[
  \lie n\otimes \wedge^\bullet \lie q^*
  =
  \cP_d\otimes \C[p_\1,\ldots,p_\dd]\otimes \wedge^\bullet \lie q^* .
\]
This is the Koszul homology complex for the commuting operators
\[
  \frac{\partial}{\partial p_\1},\ldots,\frac{\partial}{\partial p_\dd}
\]
acting on $\C[p_\1,\ldots,p_\dd]$. Equivalently, after identifying $\C[p_\1,\ldots,p_\dd]$ with $\op{S}(\lie q)$, it is
the standard Koszul complex resolving the quotient by the images of these derivations. Hence its homology is
concentrated in cohomological degree $-d$, where it is
\[
  \cP_d\otimes \wedge^d\lie q^* .
\]

A similar conclusion holds for every $a\geq 0$. Indeed, write the action of $z^\s$ on $\lie n\otimes \wedge^a\lie u$
as
\[
  t^\s
  =
  -\frac{\partial}{\partial p_\s}\otimes 1
  +
  1\otimes \theta^\s,
\]
where $\theta^\s$ is the action of $z^\s$ on $\wedge^a\lie u$. The operators $\theta^\s$ commute with one another and are locally nilpotent. 
Therefore the exponential operator $\exp\left(-\sum_{\s=1}^d p_\s\theta^\s\right)$
conjugates the operators $t^\s$ to
\[
  \exp\left(-\sum_{\s=1}^d p_\s\theta^\s\right) t^{\s} 
  \exp\left(\sum_{\s=1}^d p_\s\theta^\s\right) = 
  -\frac{\partial}{\partial p_\s}\otimes 1 .
\]
Thus, for every $a\geq 0$ and $b < d$ we have $E^1_{a,b}=0$ and
\[
  E^1_{a,d}
  \cong
  \cP_d\otimes \wedge^a\lie u\otimes \wedge^d\lie q^* .
\]

We now examine the $E^1$-differential $\d^1 \colon E^1_{1,d}\to E^1_{0,d}$.
We claim that
\begin{equation}\label{eq:image}
  \op{im} \left(\d^1|_{E_{1,d}^1}\right) \supset \left(\sum_{\s=1}^d \frac{\partial \cP_d}{\partial z^\s}\right)
  \otimes
  z^\1\wedge\cdots\wedge z^\dd .
\end{equation}
Fix $G\in \cP_d$ and an index $\s$. 
The class of $G\otimes p_\s\in \cP_d\otimes \lie u$ has a representative on the $E^0$-page given by
\[
  \eta^\s
  \define
  \exp\left(-\sum_{\ii=1}^d p_\ii \theta^\ii\right)
  (G\otimes p_\s)
  \otimes
  z^\1\wedge\cdots\wedge z^\dd .
\]
In fact, this exponential truncates after the first correction term and so equivalently
\[
  \eta^\s
  =
  \left(
    G\otimes p_\s
    -
    G p_\s\otimes 1
  \right)
  \otimes
  z^\1\wedge\cdots\wedge z^\dd .
\]
By construction, $\eta^\s$ is a cycle for the $E^0$ differential.
The Hamiltonian action of the linear function $p_{\s}$ on $\lie{n}$ is
\[
  p_\s\cdot G
  =
  \frac{\partial G}{\partial z^\s}.
\]
Therefore, the $E^1$-differential applied to $\eta^\s$ is
\[
  \d^1 (\eta^\s)
  =
  \frac{\partial G}{\partial z^\s}
  \otimes
z^\1\wedge\cdots\wedge z^\dd .
\]
This proves the claim \eqref{eq:image}.

It follows that $E^2_{0,d}$ is a quotient of
\[
  \frac{
    \cP_d\otimes z^\1\wedge\cdots\wedge z^\dd
  }{
    \left(\sum_{\s=1}^d \frac{\partial \cP_d}{\partial z^\s}\right)
    \otimes z^\1\wedge\cdots\wedge z^\dd
  }.
\]
Since
\[
  \frac{\cP_d}{\sum_{\s=1}^d \partial_{z^\s}\cP_d}
  \cong
  \C\cdot P(d),
\]
we conclude that $E^2_{0,d}$ has dimension at most one. Because the spectral sequence is concentrated in the region $b\leq d$ and all terms with $b<d$ vanish on the $E^1$-page, every class in total degree $d$ is detected on the line $b=d$. Hence
\[
  \dim \HH^{Lie}_d(\lie o;\lie n)\leq 1.
\]
\end{proof}

\paragraph
Consider the algebra of polynomial differential operators $\lie{D}_d$ on $\AA^d$.
There is an increasing filtration on $\lie{D}_d$ given by the order of differential operators and its associated graded is the Poisson
algebra $\lie{o}$.
With this identification, the principal symbol of $\del_{\s}$ is $p_\s$ and the commutator induces the standard Poisson bracket.

The same filtration applies to the Jouanolou model of differential operators. At the level of $\dbar$-cohomology, one obtains a filtered Lie algebra $H^0_{\dbar}(\cD_d)$ and a filtered module $H^{d-1}_{\dbar}(\cD_d)$, with
\[
  \op{gr} H^\bu_{\dbar}(\cD_d) \cong \op{gr}\left(
    H^0_{\dbar}(\cD_d)\ltimes H^{d-1}_{\dbar}(\cD_d)[-d+1]
  \right)
  \cong
  \lie o\ltimes \lie n[-d+1].
\]

\begin{lemma}\label{lem:D-homology}
  One has
  \[
    \dim
    \HH^{Lie}_d
    \left(
      H^0_{\dbar}(\cD_d);
      H^{d-1}_{\dbar}(\cD_d)
    \right)
    \leq 1.
  \]
\end{lemma}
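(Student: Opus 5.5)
We plan to deduce the bound from lemma \ref{lem:principal-symbol-homology} through the order filtration. The Lie algebra $H^0_{\dbar}(\cD_d)$ carries the increasing order filtration $F_\bu$, and so does the module $H^{d-1}_{\dbar}(\cD_d)$. Both filtrations are compatible with the bracket and the action, and the associated graded is $\lie o \ltimes \lie n$, as recorded just before the statement. We extend this filtration to the Chevalley--Eilenberg complex
\[
  C^{Lie}_\bu\left(H^0_{\dbar}(\cD_d); H^{d-1}_{\dbar}(\cD_d)\right)
\]
by total order: a chain $G\otimes X_1\wedge\cdots\wedge X_a$ has filtration degree $\op{ord}(G)+\sum_i \op{ord}(X_i)$. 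The commutator of operators of orders $m$ and $n$ has order at most $m+n-1$, and the same holds for the action on the module. The naive total order is therefore not quite compatible with the differential, which lowers it by one. To fix this we use the shifted weight $\op{ord}-1$ on each factor, i.e.\ filtration degree $\op{ord}(G)+\sum_i(\op{ord}(X_i)-1)$, under which brackets preserve the filtration. With this convention the associated graded complex is exactly $C^{Lie}_\bu(\lie o;\lie n)$, with the Poisson bracket and the Hamiltonian action.

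We then get a spectral sequence with $E^1 = \HH^{Lie}_\bu(\lie o;\lie n)$ converging to $\HH^{Lie}_\bu(H^0_{\dbar}(\cD_d);H^{d-1}_{\dbar}(\cD_d))$. Lemma \ref{lem:principal-symbol-homology} gives $\dim E^1$ in homological degree $d$ at most one. Since $E^\infty$ is a subquotient of $E^1$, the $E^\infty$ term in degree $d$ is also at most one-dimensional. If convergence holds, the associated graded of the abutment in degree $d$ is at most one-dimensional, and hence so is the abutment.

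The main obstacle is convergence. The filtration is increasing and exhaustive, but it is not bounded below in general, because the shifted weight $\op{ord}-1$ is negative for multiplication operators. We plan to handle this with the Euler (polynomial) grading, using $GL_d$-equivariance. Both the Chevalley--Eilenberg differential and the order filtration preserve the $\lie{gl}_d$-weight, i.e.\ the grading by $\deg_z-\deg_\partial$ together with the $\det(\lie q)$ twist on $\cP_d$. In each fixed weight and fixed homological degree $d$, the number of factors is fixed, and $z$-degrees are bounded below because the $\cP_d$ part contributes negative weight. This bounds the order within each weight piece, so each weight component of the filtration is finite and the spectral sequence converges weightwise. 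Summing over weights then gives the bound.

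Here we should be careful: the proof of lemma \ref{lem:principal-symbol-homology} already shows the surviving class sits in a single weight, that of $P(d)\otimes z^\1\wedge\cdots\wedge z^\dd$. So it suffices to check that every other weight component vanishes at $E^1$, which the lemma gives. Summing over weights is therefore harmless.
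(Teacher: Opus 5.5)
Your route is the paper's: filter the Chevalley--Eilenberg complex by order, identify the associated graded with $C^{Lie}_\bullet(\lie o;\lie n)$, and invoke Lemma~\ref{lem:principal-symbol-homology}. Your shifted weight $\op{ord}(G)+\sum_i(\op{ord}(X_i)-1)$ is actually the more accurate bookkeeping. With the naive total order the differential strictly lowers the filtration, so the Poisson differential only appears as $d^1$. The paper's leading-symbol induction is this spectral sequence unwound by hand.

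The step that fails is your convergence paragraph, and it fails on two counts.

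\emph{The premise is wrong.} Every operator has order $\geq 0$, so a chain in $C_n=H^{d-1}_{\dbar}(\cD_d)\otimes\wedge^n H^0_{\dbar}(\cD_d)$ has shifted weight $\geq -n$. The filtration is therefore bounded below in each homological degree, as well as exhaustive.

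\emph{The remedy is false.} A fixed $\lie{gl}_d$-weight does not bound the order. For instance, $P(d)\otimes (z^\1)^{N+1}\partial_\1^{N}\wedge z^\2\wedge\cdots\wedge z^\dd$ has the same torus weight as $P(d)\otimes z^\1\wedge\cdots\wedge z^\dd$ for every $N$, but has order $N$. So the weight pieces of the filtration are not finite, and the claimed weightwise convergence does not follow.

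\emph{The repair.} Drop the weight decomposition and use the classical convergence theorem for a filtration that is exhaustive and bounded below in each degree. The differentials leaving a fixed filtration degree $s$ in total degree $d$ land in filtration degree $s-r$ of total degree $d-1$, which vanishes once $s-r<-(d-1)$. Hence each $E^\infty_s$ in total degree $d$ is a quotient of a subquotient of $E^1_s$, and
\[
\textstyle\sum_s \dim E^\infty_s\leq \dim \HH^{Lie}_d(\lie o;\lie n)\leq 1 .
\]
The induced filtration on $\HH^{Lie}_d\bigl(H^0_{\dbar}(\cD_d);H^{d-1}_{\dbar}(\cD_d)\bigr)$ is exhaustive and bounded below, with associated graded $E^\infty$, which gives the bound. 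The same degreewise lower bound is what makes the paper's induction terminate: the paper subtracts a lift of a primitive of the leading symbol, and the order cannot drop below zero.
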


\begin{proof}
Filter the Chevalley--Eilenberg complex
\[
  C^{Lie}_\bullet
  \left(
    H^0_{\dbar}(\cD_d);
    H^{d-1}_{\dbar}(\cD_d)
  \right)
\]
by total order of differential operators. Explicitly, a chain
\[
  M\otimes D_1\wedge\cdots\wedge D_k
\]
has order at most $r$ if the sum of the order of $M$ and the orders of the $D_i$ is at most $r$. The Chevalley--Eilenberg differential preserves this filtration, because the commutator lowers order by one while the module action has the corresponding principal-symbol action on the associated graded.

The associated graded complex is therefore
\[
  \op{gr} C^{Lie}_\bullet
  \left(
    H^0_{\dbar}(\cD_d);
    H^{d-1}_{\dbar}(\cD_d)
  \right)
  \cong
  C^{Lie}_\bullet(\lie o;\lie n).
\]
Thus the order filtration gives a spectral sequence whose $E^1$-page is
\[
  E^1
  \cong
  \HH^{Lie}_\bullet(\lie o;\lie n).
\]

By lemma \ref{lem:principal-symbol-homology}, the group $\HH^{Lie}_d(\lie o;\lie n)$ is at most one-dimensional. More
precisely, the proof of this lemma shows that any class in degree $d$ is represented by the order-zero class
\[
  P(d)\otimes z^\1\wedge\cdots\wedge z^\dd .
\]
Indeed, on the $E^1$-page of the auxiliary spectral sequence used there, the only possible degree-$d$ contribution lies in bidegree $(0,d)$, and after quotienting by the image of $d^1$ it is a quotient of
\[
  \frac{\cP_d}{\sum_{\s=1}^d \partial_{z^\s}\cP_d}
  \otimes \wedge^d\lie q^*
  \cong
  \C\cdot P(d)\otimes \wedge^d\lie q^*.
\]
This representative has no $p$-dependence, hence has differential-operator order zero.

Now let $\alpha$ be a closed element of
\[
  C^{Lie}_d
  \left(
    H^0_{\dbar}(\cD_d);
    H^{d-1}_{\dbar}(\cD_d)
  \right).
\]
and that $\alpha$ has positive total order $r>0$.
Let $\sigma_r(\alpha)$ denote its leading symbol. 
Since $\alpha$ is closed, $\sigma_r(\alpha)$ is a cycle in $C^{Lie}_d(\lie o;\lie n)$ of order $r$. 
But the degree-$d$ principal-symbol homology has no nonzero classes of positive order. 
Therefore $\sigma_r(\alpha)$ is a boundary:
\[
  \sigma_r(\alpha)=\d \overline{\beta}
\]
for some $\overline{\beta}\in C^{Lie}_{d+1}(\lie o;\lie n)$ of order $r$. Choose a lift $\beta$ of $\overline{\beta}$ to the filtered Chevalley--Eilenberg complex. 
Then
\[
  \alpha-\d \beta
\]
is homologous to $\alpha$ and has strictly smaller total order.

Repeating this procedure, we may replace $\alpha$ by a homologous cycle of order zero. Therefore every homology class in degree $d$ is represented by a chain of the form
\[
  G\otimes f_1\wedge\cdots\wedge f_d,
  \qquad
  G\in \cP_d,\quad f_i\in \C[z] .
\]
Passing again to the principal-symbol calculation, the class of such a cycle is a scalar multiple of
\[
  P(d)\otimes z^\1\wedge\cdots\wedge z^\dd .
\]
Consequently the filtered homology group
\[
  \HH^{Lie}_d
  \left(
    H^0_{\dbar}(\cD_d);
    H^{d-1}_{\dbar}(\cD_d)
  \right)
\]
is at most one-dimensional.
\end{proof}

\paragraph{Proof of theorem \ref{thm:1dImage}}
The map $\lie{witt}_d\to \cD_d$
sends a polynomial vector field
to the corresponding first-order differential operator.
After passing to $\dbar$-cohomology, the image of the relevant degree-two Lie homology class is represented, in the associated graded, by the principal-symbol class
\[
  P(d)\otimes z^\1\wedge\cdots\wedge z^\dd .
\]
By Lemma \ref{lem:D-homology}, the target space containing this image is at most one-dimensional in the relevant summand. Hence the image of
\[
  \HH^{Lie}_2(\lie{witt}_d)\to \HH^{Lie}_2(\cD_d)
\]
has dimension at most one.

It remains to show that the image is nonzero. 
This follows from the nontriviality of the Fuks--Feigin cocycle: under the symbol map, the class constructed from $\lie{witt}_d$ pairs nontrivially with the principal-symbol class
\[
  P(d)\otimes z^\1\wedge\cdots\wedge z^\dd .
\]
By \cite{FHK}, this class is nontrivial. Therefore the image is nonzero.

Combining the upper bound with nontriviality, the image is exactly one-dimensional.

\paragraph
Using the same strategy, we can prove the following theorem.
  \begin{theorem}\label{1dImageMatrix}
     The image of $\HH_2^{Lie}(\lie{witt}_d\ltimes \lie{gl}_r(\cJ_d))$ inside of $\HH^{Lie}_2(\lie{gl}_r(\cD_d))$ is one-dimensional.
     \end{theorem}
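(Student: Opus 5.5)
We follow the three-step strategy used for theorem \ref{thm:1dImage}, now with matrix coefficients throughout. The map of dg Lie algebras $\lie{witt}_d\ltimes\lie{gl}_r(\cJ_d)\to\lie{gl}_r(\cD_d)$ sends a vector field $X$ to $X\cdot\mathbf{1}_r$ and a matrix-valued function $M$ to the zeroth-order operator of multiplication by $M$.

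The principal-symbol model is now the Lie algebra $\lie{gl}_r(\lie o)$. Its bracket on the associated graded is $[A f, B g] = AB\{f,g\}$ to leading order, plus the commutator term $[A,B]fg$, which has the same order. It acts on $\lie{gl}_r(\lie n)$. The first step is the analogue of lemma \ref{lem:principal-symbol-homology}: we show that $\HH^{Lie}_d(\lie{gl}_r(\lie o);\lie{gl}_r(\lie n))$ is at most one-dimensional in the summand reached from $\lie{witt}_d\ltimes\lie{gl}_r(\cJ_d)$.

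To do this we again filter by the number of factors outside the abelian subalgebra $\lie q^*\otimes\mathbf 1_r$ spanned by the $z^\s\mathbf 1_r$. The scalar generators $z^\s\mathbf 1_r$ act on $\lie{gl}_r(\lie n)$ by $\partial/\partial p_\s$ and on the quotient by a locally nilpotent operator. The same exponential conjugation therefore collapses $E^1$ onto the line $b=d$, giving
\[
E^1_{a,d}\cong \lie{gl}_r(\cP_d)\otimes\wedge^a\lie u_r\otimes\wedge^d\lie q^*,
\]
with $\lie u_r=\lie{gl}_r(\lie o)/\lie q^*\mathbf 1_r$.

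The $d^1$ differential out of $E^1_{1,d}$ now has two kinds of image. Using $p_\s\mathbf 1_r$, exactly as before, we kill $\partial_{z^\s}\lie{gl}_r(\cP_d)$. Using the constant matrices $A\in\lie{gl}_r\subset\lie u_r$, we kill the commutators $[A,\lie{gl}_r(\cP_d)]$. So $E^2_{0,d}$ is a quotient of
\[
\frac{\lie{gl}_r}{[\lie{gl}_r,\lie{gl}_r]}\otimes\C\cdot P(d)\otimes z^\1\wedge\cdots\wedge z^\dd ,
\]
which is one-dimensional and spanned by $\mathrm{Tr}$, that is, by $\mathbf 1_r P(d)\otimes z^\1\wedge\cdots\wedge z^\dd$.

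The second step is to rerun the order-filtration argument of lemma \ref{lem:D-homology} verbatim for $\lie{gl}_r(\cD_d)$. Every degree-$d$ class reduces to an order-zero cycle. By the first step, that cycle is a multiple of $\mathbf 1_rP(d)\otimes z^\1\wedge\cdots\wedge z^\dd$, which bounds the image by one dimension.

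The third step is non-vanishing. The composite $\lie{witt}_d\to\lie{witt}_d\ltimes\lie{gl}_r(\cJ_d)\to\lie{gl}_r(\cD_d)$ factors as $\lie{witt}_d\to\cD_d\xrightarrow{\otimes\mathbf 1_r}\lie{gl}_r(\cD_d)$. Theorem \ref{thm:1dImage} gives a class with nonzero image in $\HH^{Lie}_2(\cD_d)$. Pushing it along $\mathbf 1_r$ and pairing with the trace-composed Feigin--Fuks (FHK) cocycle on $\lie{gl}_r(\cD_d)$ multiplies the pairing by $r\neq 0$, so the image is nonzero.

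The main obstacle is the first step. One must check that the conjugation trick still works when $\lie u_r$ contains the nonabelian constant $\lie{gl}_r$: the $\theta^\s$ must still commute and be locally nilpotent, which holds because $z^\s\mathbf 1_r$ is central in the matrix direction. One must also identify the $\lie{gl}_r$-coinvariants of $\lie{gl}_r(\cP_d)$ correctly. A smaller subtlety is the multiplicative structure: $\lie{gl}_r(\cD_d)$ mixes the matrix commutator with the operator commutator, and the order filtration must be set up so that the constant matrices have order zero and both pieces of the bracket appear in the associated graded.
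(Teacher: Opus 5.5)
Your route is different from the paper's. As written, its first step, which you rightly call the crux, has a genuine gap, though it can be repaired.

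\textbf{The gap.} The step rests on an associated graded that does not exist as you describe it. With constant matrices in order zero and symmetric ordering,
\[
[Af,Bg]=[A,B]\,fg+\tfrac12(AB+BA)\{f,g\}+(\text{lower order}).
\]
So the matrix-commutator term has order $\operatorname{ord}f+\operatorname{ord}g$, which is one \emph{more} than the Poisson term; the two do not have the same order. Your $AB\{f,g\}$ is also not antisymmetric. Consequently no single order filtration can make ``both pieces of the bracket appear in the associated graded'':
\begin{itemize}
\item If the bracket preserves $\operatorname{ord}$, the graded bracket is only the current bracket $[A,B]fg$. Then $z^\s\mathbf 1_r$ is central and acts by $0$ rather than by $\partial/\partial p_\s$, and the exponential conjugation and the collapse of $E^1$ onto $b=d$ are lost.
\item If you use $\operatorname{ord}-1$, as in the scalar case, to keep the Poisson bracket, then $[A,B]fg$ raises filtration degree, so you no longer have a filtered Lie algebra.
\end{itemize}
This is not a minor subtlety: your whole $E^1$ computation depends on it.

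\textbf{The repair.} Split $\lie{gl}_r=\C\mathbf 1_r\oplus\lie{sl}_r$. Weight $f\mathbf 1_r$ by $\operatorname{ord}f-1$ but $Af$, for $A\in\lie{sl}_r$, by $\operatorname{ord}f$, with a compatible shift on the module. The bracket is then filtered, and the mixed trace term $\frac1r\mathrm{tr}(AB)[f,g]$ drops to lower weight. The associated graded is $\lie o\ltimes\lie{sl}_r(\lie o)$, i.e.\ Hamiltonian vector fields acting on a current algebra. In this model the rest of your argument goes through:
\begin{itemize}
\item $z^\s\mathbf 1_r$ still acts by $\partial/\partial p_\s$ and locally nilpotently on $\lie u_r$, so your collapse works.
\item $p_\s\mathbf 1_r$ kills $\partial_{z^\s}\cP_d$.
\item Constant $A\in\lie{sl}_r$ kill $\lie{sl}_r(\cP_d)$ by perfectness, so the bound follows.
\end{itemize}
Your nonvanishing step, via the factor $r$, is fine.

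\textbf{Comparison with the paper.} The paper avoids any matrix symbol calculus. It takes the spanning cycles $\mathbb{v}_{\lambda,\kappa}$ of definition \ref{dfn:partitionvv} and uses explicit Chevalley--Eilenberg homotopies in $\lie{gl}_r(\cD_d)$, built from the first-order operators $z^\1\partial_{z^\1}A$, to merge all matrix factors into one product next to $z^\s\mathbf I_r$. It then replaces that product by its trace using $\lie{sl}_r=[\lie{sl}_r,\lie{sl}_r]$ and invokes theorem \ref{thm:1dImage}. Your repaired argument bounds the relevant degree-$d$ homology of $\lie{gl}_r(\cD_d)$ directly and needs no spanning set for the source. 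The price is the two-weight filtration.
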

\begin{proof}
Recall that in definition \ref{dfn:partitionvv}, we formed a collection of cycles on the first page of the $\dbar,\mathbf{d}^{(2)}$ spectral sequence, which can be extended to full cycles that span the entire homology. Thus we only need to show that the image of 
     \[
\mathbb{v}_{\lambda,\theta_{(k_1,\ldots,k_r)}}(d)
:= \sum_{\tau \in S_k} \sum_{\ell=1}^{n_{(k_1,\ldots,k_r)}} 
P(d)\,
A^{\theta_{(k_1,\ldots,k_r)},\ell}_{\tau(0)} \wedge z^{\1} A^{\theta_{(k_1,\ldots,k_r)},\ell}_{\tau(1)} \wedge \cdots \wedge z^{\kkk} A^{\theta_{(k_1,\ldots,k_r)},\ell}_{\tau(k)}
\]
\[
\wedge \,\det(z^{\Lambda_1},\mathsf{Eu}_{\Lambda_1}) \wedge \cdots \wedge \det(z^{\Lambda_{d-k}},\mathsf{Eu}_{\Lambda_{d-k}}),
\]
    is a scalar multiple of 
    \[
    P(d)\mathbf{I}_r\otimes z^{\1}\mathbf{I}_r\wedge\cdots \wedge z^{\dd}\mathbf{I}_r.
    \]
    Consider the following elements
     \[
\sum_{\tau \in S_k} \sum_{\ell=1}^{n_{(k_1,\ldots,k_r)}} 
P(d)\,
A^{\theta_{(k_1,\ldots,k_r)},\ell}_{\tau(0)} \wedge z^{\1}\partial_{z^{\1}}\cdot A^{\theta_{(k_1,\ldots,k_r)},\ell}_{\tau(1)} \wedge  z^{\1}\mathbf{I}_r\wedge\cdots \wedge z^{\kkk} A^{\theta_{(k_1,\ldots,k_r)},\ell}_{\tau(k)}
\]
\[
\wedge \,\det(z^{\Lambda_1},\mathsf{Eu}_{\Lambda_1}) \wedge \cdots \wedge \det(z^{\Lambda_{d-k}},\mathsf{Eu}_{\Lambda_{d-k}}).
\]
 By a direct computation, it is a Chevalley-Eilenberg homotopy between $\mathbb{v}_{\lambda,\theta_{(k_1,\ldots,k_r)}}(d)$ and 
     \[
\sum_{\tau \in S_k} \sum_{\ell=1}^{n_{(k_1,\ldots,k_r)}} 
P(d)\,
A^{\theta_{(k_1,\ldots,k_r)},\ell}_{\tau(0)}A^{\theta_{(k_1,\ldots,k_r)},\ell}_{\tau(1)}  \wedge z^{\1}\mathbf{I}_r \wedge \cdots \wedge z^{\kkk} A^{\theta_{(k_1,\ldots,k_r)},\ell}_{\tau(k)}\wedge \,\det(z^{\Lambda_1},\mathsf{Eu}_{\Lambda_1}) \wedge \cdots \wedge \det(z^{\Lambda_{d-k}},\mathsf{Eu}_{\Lambda_{d-k}}),
\]
 Iterating this argument, we obtain that $\mathbb{v}_{\lambda,\theta_{(k_1,\ldots,k_r)}}(d)$ is homologous to 
         \[
\sum_{\tau \in S_k} \sum_{\ell=1}^{n_{(k_1,\ldots,k_r)}} 
P(d)\,
A^{\theta_{(k_1,\ldots,k_r)},\ell}_{\tau(0)}A^{\theta_{(k_1,\ldots,k_r)},\ell}_{\tau(1)}\ldots A^{\theta_{(k_1,\ldots,k_r)},\ell}_{\tau(k)}  \wedge z^{\1}\mathbf{I}_r \wedge \cdots \wedge z^{\kkk}\mathbf{I}_r \wedge \,\det(z^{\Lambda_1},\mathsf{Eu}_{\Lambda_1}) \wedge \cdots \wedge \det(z^{\Lambda_{d-k}},\mathsf{Eu}_{\Lambda_{d-k}}),
\]
    and, using the fact that $\lie{sl}_r=[\lie{sl}_r,\lie{sl}_r]$, this class is in turn homologous to
     \[
\sum_{\tau \in S_k} \sum_{\ell=1}^{n_{(k_1,\ldots,k_r)}} 
P(d)\,
\mathrm{tr}_{\lie{gl}_r}\big(A^{\theta_{(k_1,\ldots,k_r)},\ell}_{\tau(0)}A^{\theta_{(k_1,\ldots,k_r)},\ell}_{\tau(1)}\ldots A^{\theta_{(k_1,\ldots,k_r)},\ell}_{\tau(k)} \big) \wedge z^{\1}\mathbf{I}_r \wedge \cdots \wedge z^{\kkk}\mathbf{I}_r \wedge \,\det(z^{\Lambda_1},\mathsf{Eu}_{\Lambda_1}) \wedge \cdots \wedge \det(z^{\Lambda_{d-k}},\mathsf{Eu}_{\Lambda_{d-k}}).
\]
Finally we apply theorem \ref{thm:1dImage} to complete the proof.
  \end{proof}

\section{Local Grothendieck-Riemann-Roch theorem}\label{s:GRR}

In this final section we formulate and prove a local, universal form of the Grothendieck-Riemann-Roch theorem for
formal families of complex varieties.
It is a natural generalization of the relationship of the Mumford determinant line bundle defined on the
moduli space of Riemann surfaces and the central charge. 

The set up is as follows.
First, note that $\cJ_d$ is a representation for the dg lie algebra $\lie{witt}_d$
\[
 \sfL \colon \lie{witt}_d \xto{\simeq} \op{Der}( \cJ_d) \to \op{End} \cJ_d .
\]
Pulling back along $\sfL$ is a map in Lie algebra hyper Lie algebra cohomology 
\begin{equation}\label{}
  \sfL^* \colon \HH^2_{Lie}(\op{End} \cJ_d) \to \HH^2_{Lie} (\lie{witt}_d) . 
\end{equation}

More generally, let $\cV$ be any tensor $\lie{witt}_d$-module (recall \ref{TensorMod}). 
Then, we obtain a homomorphism (essentially Lie derivative): 
\begin{equation}\label{}
  \sfL_{V} \colon \lie{witt}_d \to \op{End}(\cV)  
\end{equation}
and hence a map in cohomology 
\begin{equation}\label{}
  \sfL_V^* \colon \HH^2_{Lie}(\op{End} \cV) \to \HH^2_{Lie}(\lie{witt}_d) .
\end{equation}

Assume that $V$ is an $r$-dimensional tensor $\lie{gl}_d$-representation. 
Then, $\rho_V \colon \lie{gl}_d \to \op{End}(V)\simeq \lie{gl}_r$ induces a morphism
\[
\rho_V^*: \op{S}^\bu(\lie{gl}_r^*)^{\lie{gl}_r}\simeq \C[\lie{t}_1,\ldots,\lie{t}_r]\longrightarrow \op{S}^\bu(\lie{gl}_d^*)^{\lie{gl}_d}\simeq \C[y_1,\ldots,y_d].
\]
We define $\op{Ch}_i(\cV) \define \rho_V^*(\lie{t}_i)\in \C[y_1,\ldots,y_d]$. With this in place, the polynomial
\[
\op{Td}\,\op{Ch}(\cV)\in \C[y_1,\ldots,y_d]
\]
is now well-defined.

By \cite[theorem 5.4.16]{FHK} the dg vector space $\op{End} \cV$ is a dg Tate vector space, and therefore its second Lie algebra
cohomology is one-dimensional, with generator we denote 
\begin{equation}\label{}
  \mathfrak{t} \in \HH^2_{Lie}(\op{End}\cV) \simeq \C.
\end{equation}
It is dual to the homology class
\begin{align}\label{eq:universalV}
  \left[z^\1 \wedge \cdots \wedge z^{\ddd} \wedge P \right]  &  \in 
  \HH_2^{Lie}( \op{End} \cV) \\ 
  \<\mathfrak{t}, [z^\1 \wedge \cdots \wedge z^{\ddd} \wedge P] \> & = 1 .
\end{align}
Here, we use that $z^\1, \ldots, z^{\ddd}, P \in \cJ_d$ acts purely through the $\cJ_d$-module structure.
With this setup, we will prove the following local, universal version of the Grothendieck-Riemann-Roch theorem which
was stated as a
conjecture in \cite{KapranovNotes}.

\begin{theorem}\label{thm:grr} 
  Let $V$ be a finite-dimensional $\lie{gl}_d$-representation.
  The composition
\begin{equation}\label{}
  \HH_2^{Lie}(\lie{witt}_d) \xto{(\sfL_V)_*} \HH_2^{Lie}( \op{End} \cV) \xto{\tau} \C ,
\end{equation}
is the following universal class:
  \begin{equation}\label{}
    \op{Td} \op{Ch}(\cV)|_{2d+2} \in \C[\op{ch}_1,\ldots,\op{ch}_d]_{2d+2}
  \end{equation}
  In other words, at the level of cohomology, one has 
  \begin{equation}\label{}
    \sfL^*_V [z^\1 \wedge \cdots \wedge z^{\ddd} \wedge P] = \left[\op{Td} \op{Ch}(\cV)|_{2d+2}\right] \in \HH^2_{Lie}(\lie{witt}_d) .
  \end{equation}
\end{theorem}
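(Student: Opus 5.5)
The plan is to reduce the identity to a comparison of two linear functionals on $\HH_2^{Lie}(\lie{witt}_d)$. Since $\mathsf{cw}_d$ is an isomorphism (theorem \ref{thm:cwd-injective} together with theorem \ref{thm:local}), these functionals can be tested against an explicit spanning set of cycles. Rather than working with $\lie{witt}_d$ directly, I would factor $\sfL_V$ through the semi-direct product. The map $X \mapsto (X, \rho_V(JX))$ defines a homomorphism $\iota_V \colon \lie{witt}_d \to \lie{witt}_d \ltimes \lie{gl}_r(\cJ_d)$, using the identification $\cV \cong \cJ_d^{\oplus r}$. Then $\sfL_V$ is the composite of $\iota_V$ with the natural map $\lie{witt}_d \ltimes \lie{gl}_r(\cJ_d) \to \lie{gl}_r(\cD_d) \to \op{End}\cV$. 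This map sends a vector field to a first-order operator tensored with $\mathbf{I}_r$, and a matrix of functions to a multiplication operator.

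\textbf{Step 1.} Compute the pullback of $\mathfrak{t}$ to $\HH^2_{Lie}(\lie{witt}_d \ltimes \lie{gl}_r(\cJ_d))$ as a universal polynomial in $y_1,\ldots,y_d$ and $\lie{t}_1,\ldots,\lie{t}_r$. By theorem \ref{1dImageMatrix}, the image of $\HH_2^{Lie}(\lie{witt}_d \ltimes \lie{gl}_r(\cJ_d))$ in $\HH_2^{Lie}(\lie{gl}_r(\cD_d))$ is one-dimensional. Hence the pulled-back class is a scalar multiple of the composite of the projection onto that line with its pairing against $\mathfrak{t}$. Concretely, by the homotopies in the proof of theorem \ref{1dImageMatrix}, each cycle $\mathbb{v}_{\lambda,\kappa}(d)$ is homologous to a scalar $c_{\lambda,\kappa}$ times $P(d)\mathbf{I}_r \otimes z^\1\mathbf{I}_r \wedge\cdots\wedge z^\dd\mathbf{I}_r$, and $\mathfrak{t}$ pairs with the latter to give $r$ or $1$ depending on normalization. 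So the pullback class is determined by the numbers $c_{\lambda,\kappa}$. Equivalently, it is a universal element $\Phi_d \in \C[y,\lie{t}]_{d+1}$ that does not depend on $V$.

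\textbf{Step 2.} Pull back along $\iota_V$. On Chern--Weil cocycles, $\iota_V^* \gamma_{\lie{t}_j}$ is obtained by substituting $\rho_V(JX)$ into $\op{tr}(A^j)$. This is exactly the Chern--Weil cocycle attached to the invariant polynomial $\rho_V^*(\lie{t}_j)$ evaluated on the Jacobian. Using the $*$-product formalism, and the fact that $\rho$ converts products of $\lie{c}_k$ into $\op{ch}$ classes, this identifies $\iota_V^*\mathsf{cw}_{d,r}(f)$ with $\mathsf{cw}_d(\rho_V^* f)$. The identification is cochain-level up to the combinatorial normalizations $2^{-k}$, which must be checked to match $\op{Ch}$ rather than the unnormalized traces. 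It follows that $\sfL_V^*\mathfrak{t} = \mathsf{cw}_d(\Phi_d|_{\lie{t}\mapsto \op{Ch}(\cV)})$.

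\textbf{Step 3 (main obstacle).} Identify $\Phi_d$ as the degree $d+1$ component of $\op{Td}(y)\cdot \op{Ch}$, linear in the $\lie{t}$ variables. Three structural inputs constrain $\Phi_d$:
\begin{itemize}
\item[(i)] it is additive under direct sums of $\cV$, since $\mathfrak{t}$ restricts additively to block-diagonal endomorphisms, so $\Phi_d$ is linear in the traces $\lie{t}_j$, with $\lie{t}_j^{k}$ products absent;
\item[(ii)] on the pure current part ($y=0$) it must reproduce the higher Kac--Moody cocycle of \cite{FHK}, fixing the coefficient of $\lie{t}_{d+1}$-type terms;
\item[(iii)] tensoring $V$ by a character of $\lie{gl}_d$, i.e.\ twisting by a line bundle, must act multiplicatively via $e^{\lie{t}_1}$.
\end{itemize}
Inputs (i)--(iii) force $\Phi_d = \sum_j T_{d+1-j}(y)\,\op{ch}_j$ for some universal power series $T(y)$. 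I expect determining $T$ to be the hard part. My first attempt would compute the pairing of the tautological cycle for $\cV = \cJ_d$ (with $V$ trivial) against the cycles $\til{\mathbb{v}}_\lambda(d)$, using the triangular structure of proposition \ref{prop:upper-triangular-pairing}. Failing that, I would use the Koszul resolution $\cV = \Omega^p$: the alternating sum $\sum (-1)^p \Omega^p$ is acyclic as a $\lie{witt}_d$-module up to the residue, which should give the multiplicative characterization of $\op{Td}$ via $\op{Td}\cdot\sum(-1)^p\op{Ch}(\Lambda^p)=\op{top\ Chern}$, pinning down $T=\op{Td}$.
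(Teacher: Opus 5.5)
Your setup matches the paper's: you factor $\mathsf{L}_V$ through $\lie{witt}_d\ltimes\lie{gl}_r(\cJ_d)\to\lie{gl}_r(\cD_d)\to\op{End}\cV$ and then substitute $\rho_V^*$. The content of the theorem, however, is entirely in your Step 3, and there the argument has a genuine gap.

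Theorem \ref{1dImageMatrix} by itself computes nothing. It tells you the constants $c_{\lambda,\kappa}$ exist. But the only functional you have on that line is $\tau\circ\tilde{\mathsf{L}}_*$, and nothing in your argument evaluates it on $i_*\mathbb{v}_{\lambda,\kappa}(d)$, because the proof of one-dimensionality is a non-constructive spectral-sequence argument. For the same reason, your route (a) is not a computation you can carry out. It would in any case only determine the pure-$y$ part of $\Phi_d$.

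The missing idea is a \emph{second}, explicitly computable cocycle on $\lie{gl}_r(\cD_d)$. This is the paper's residue trace $\op{ResTr}^{S^1}_{\cD}$. It is obtained by passing to the Weyl algebra through the symbol map, so that $\sigma(T)=T-\frac12\op{div}(T)$ on vector fields, and applying an FFS/GLX-type configuration-space formula. It is normalized so that $\op{ResTr}^{S^1}_{\cD}[z^\1\wedge\cdots\wedge z^\dd\wedge P]=1=\tau[z^\1\wedge\cdots\wedge z^\dd\wedge P]$. One-dimensionality of the image is used precisely to conclude that these two functionals agree on $i_*\HH_2^{Lie}(\lie{witt}_d\ltimes\lie{gl}_r(\cJ_d))$. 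Then $\op{ResTr}^{S^1}_{\cD}$ is evaluated by a Wick expansion in which only one-loop graphs occur. This produces $\log\op{Td}$, with the symbol-map twist responsible for the linear term. The $\lie{gl}_r(\cJ_d)$ insertions cannot be contracted without creating a tree, so they contribute only the factor $\op{Ch}$.

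Your structural substitutes do not replace this computation.
\begin{itemize}
\item Input (iii) cannot be derived. The module $\cV\otimes\chi$ is not conjugate to $\cV$ as a $\lie{witt}_d$-module: the shift $X\mapsto\mathsf{L}_V(X)+s\,\op{div}(X)$ is not implemented by multiplication by any function. So multiplicativity of $\op{Ch}$ under twisting gives no relation on $\Phi_d$.
\item Inputs (i) and (ii) only leave $\Phi_d=\sum_j a_j(y)\lie{t}_j$ with independent coefficients $a_j$. They do not produce a single series $T$.
\item The de Rham/Koszul route gives a valid relation, but it is far too weak. We have $\sum_p(-1)^p\op{Ch}(\Lambda^p\lie{q}^*)=c_d\op{Td}^{-1}$, which already has weight at least $d$, while $\HH^2_{Lie}(\lie{witt}_d)$ only sees weight $d+1$. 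So the relation reads $c_d\cdot(T\op{Td}^{-1})_1=0$. This fixes $T$ only through degree one and detects none of the Bernoulli-number terms.
\end{itemize}
As written, your proposal shows that $\mathsf{L}_V^*\mathfrak{t}$ is some universal expression in $\op{Ch}(\cV)$ and the $y_i$. It does not show that this expression is $\op{Td}\op{Ch}(\cV)|_{2d+2}$.
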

%\gui{I think we need to clarify the meaning of Ch for tensor module}

\subsection{Differential operator formulation} 

We sketch our method of proof of theorem \ref{thm:grr} before getting into details.
The main idea is to involve the Jouanolou model of the (dg) algebra of differential operators $\cD_d$ on punctured affine space $\AA^d - \{0\}
$, see \cite[section 1.5]{GWvir2d}.
By construction, $\sfL \colon \lie{witt}_d \ltimes \lie{gl}_r(\cJ_d) \to \op{End} \cJ_d^{\oplus r}$ factors through differential operators
\begin{equation}\label{}
  \lie{witt}_d \ltimes \lie{gl}_r(\cJ_d)\xto{i} \lie{gl}_r(\cD_d) \xto{\til \sfL} \op{End} \cJ_d ^{\oplus r}
\end{equation}
where the first map simply views vector fields acting by first-order differential operators and the second map is the natural inclusion.
From here, we will use theorem \ref{1dImageMatrix} together with the existence of an algebraic trace in the spirit of
\cite{FFS}.
Indeed, in the next section we will define the ``residue trace" on our Jouanolou model for differential operators as a degree two class 
\begin{equation}\label{eq:trD}
  \op{ResTr}_{\cD}^{S^1} \in \HH^2_{Lie}(\lie{gl}_r(\cD_{d})) . 
\end{equation}
Consider the commutative diagram 
\begin{equation}\label{eq:diagram}
  \begin{tikzcd}
    \HH_2^{Lie}(\lie{witt}_d\ltimes \lie{gl}_r(\cJ_d)) \ar[rr,"\sfL_*"] \ar[dr,"i_*"] & & \HH_2^{Lie}(\op{End} \cJ_d^{\oplus r}) \ar[r,"\tau","\cong"'] & \C \\
                                                                     & \HH_2^{Lie}(\lie{gl}_r(\cD_d)) \ar[rr,"\op{ResTr}^{S^1}_{\cD}",
    ""'] \ar[ur,"\til \sfL_*"] & &
                                                                 \C 
                                                                 \end{tikzcd}
                                                               \end{equation}
By definition $\tau([z^1 \wedge z^2 \wedge \cdots \wedge z^d \wedge P]) = 1$
where $\tau$ is the universal Tate class of \cite{FHK}.
We will also show that $\op{ResTr}_{\cD}^{S^1}([z^1 \wedge \cdots \wedge z^d \wedge P]) = 1$.

\begin{theorem}\label{thm:grrD}
The composition 
\begin{equation}\label{}
  \HH_2^{Lie}(\lie{witt}_d\ltimes \lie{gl}_r(\cJ_d)) \xto{i_*} \HH_2^{Lie}(\lie{gl}_r(\cD_d)) \xto{\op{ResTr}_{\cD}^{S^1}} \C .
\end{equation}
is $\op{Td}\cdot \op{Ch}|_{2d+2}$.
\end{theorem}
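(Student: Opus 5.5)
The plan is to compute the linear functional $\op{ResTr}^{S^1}_{\cD}\circ i_*$ on $\HH_2^{Lie}(\lie{witt}_d\ltimes\lie{gl}_r(\cJ_d))$ by pairing it against an explicit spanning set of cycles, and to compare the result with $\op{Td}\cdot\op{Ch}|_{2d+2}$.

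By theorem \ref{thm:localglr} and theorem \ref{thm:cwdr}, $\mathsf{cw}_{d,r}$ is an isomorphism onto $\HH^2_{Lie}(\lie{witt}_d\ltimes\lie{gl}_r(\cJ_d))$. Dually, the cycles $\til{\mathbb{v}}_{\lambda,\kappa}(d)$ obtained from definition \ref{dfn:partitionvv}, lemma \ref{lem:wlambdaMixed} and lemma \ref{lem:descent} span the degree-two homology, and the pairing matrix against the classes $\op{ch}_{\nu;\kappa}$ is triangular. A class in $\HH^2$ is therefore determined by its values on these cycles. The statement thus reduces to checking the identity
\[
  \op{ResTr}^{S^1}_{\cD}\big(i_*\til{\mathbb{v}}_{\lambda,\kappa}(d)\big)
  = \big\langle \op{Td}\cdot\op{Ch}|_{2d+2},\,\til{\mathbb{v}}_{\lambda,\kappa}(d)\big\rangle
\]
for every pair $(\lambda,\kappa)$.

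\textbf{Step 1: reduction to one scalar per cycle.} By theorem \ref{1dImageMatrix}, $i_*$ has one-dimensional image. Its proof shows that each $i_*\til{\mathbb{v}}_{\lambda,\kappa}$ is homologous in $\lie{gl}_r(\cD_d)$ to a scalar multiple $c_{\lambda,\kappa}$ of
\[
  [P(d)\mathbf{I}_r\otimes z^\1\mathbf{I}_r\wedge\cdots\wedge z^\dd\mathbf{I}_r].
\]
Since $\op{ResTr}^{S^1}_{\cD}$ evaluates to $1$ on this class, both sides of the identity are determined by the scalars $c_{\lambda,\kappa}$. The problem becomes computing $c_{\lambda,\kappa}$ and recognising it as $\langle \op{Td}\cdot\op{Ch},\til{\mathbb{v}}_{\lambda,\kappa}\rangle$.

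\textbf{Step 2: computing the scalars.} I would first try to avoid evaluating $c_{\lambda,\kappa}$ partition by partition. Instead, I would realise $\op{ResTr}^{S^1}_{\cD}$ at cochain level as an algebraic index cocycle in the spirit of \cite{FFS}: a Hochschild/cyclic cocycle on the Weyl-type algebra $\cD_d$, transferred to Lie cohomology of $\lie{gl}_r(\cD_d)$. I would then restrict it along $i$. The restriction is computed by the Fedosov--Feigin--Shoikhet local Riemann--Roch: restricted to the relative cohomology for $\lie{gl}_d\oplus\lie{gl}_r$, the FFS cocycle equals $\hat A$ (equivalently $\op{Td}$, for the holomorphic/first-order polarisation) times $\op{Ch}$ of the curvature. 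Here the Jacobian $J$ plays the role of the $\lie{gl}_d$-connection, and the $\lie{gl}_r(\cJ_d)$ factor plays the role of the bundle connection. Matching with $\mathsf{cw}_{d,r}$, which is built from $\rho$ and the universal element $\lie{c}$ whose components are traces of Jacobians, identifies the restricted class with $\mathsf{cw}_{d,r}(\op{Td}\cdot\op{Ch}|_{d+1})$, equivalently with the total-degree-$(2d+2)$ part in the cohomological grading. The residue $\op{Res}$ inside $\rho$ supplies the normalisation $\op{Res}(P\,\d z^\1\cdots\d z^\dd)=1$, which matches $\op{ResTr}^{S^1}_{\cD}([z^\1\wedge\cdots\wedge z^\dd\wedge P])=1$.

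\textbf{Main obstacle, and a fallback.} The main difficulty is Step 2: carrying the FFS computation through the Jouanolou model and the descent corrections. As a fallback, because the answer is a universal polynomial that is multiplicative in $V$ and additive under direct sums, I would check it on enough special cases to pin it down. For a direct sum of line-type tensor modules twisted by characters of $\lie{gl}_d$, the chains $\mathbb{v}_{\lambda,\kappa}$ factor blockwise over the partition $\Lambda$. Each block of size $m$ then contributes the one-variable residue computation of \cite{GWvir2d} and the classical $d=1$ case, giving the coefficient of $\op{Td}\cdot\op{Ch}$ in the corresponding degree. Splitting-principle arguments over $\lie{gl}_d$ and $\lie{gl}_r$ would extend this to all invariant polynomials.
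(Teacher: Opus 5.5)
\textbf{There is a genuine gap.} Your Step 2 points in the same direction as the paper, an FFS-type computation, but as written it is a citation rather than an argument, and the citation does not apply. Step 1 is only a reformulation. Theorem \ref{1dImageMatrix} tells you that $i_*$ lands in a line. It says nothing about the scalars $c_{\lambda,\kappa}$, which are, up to normalisation, just $\op{ResTr}^{S^1}_{\cD}(i_*\til{\mathbb{v}}_{\lambda,\kappa})$ again. In the paper that theorem is used only to pass from theorem \ref{thm:grrD} to theorem \ref{thm:grr}.

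\textbf{Why the FFS local Riemann--Roch theorem does not apply.}
\begin{itemize}
\item It concerns a different cocycle: the Hochschild $2d$-cocycle on the formal Weyl algebra of a $2d$-dimensional symplectic space. It identifies the associated class, relative to $\lie{sp}_{2d}\oplus\lie{gl}_r$, with the curvature Chern--Weil image of $\hat{A}\cdot\op{Ch}$ in polynomial degree $d$. Here $\op{ResTr}^{S^1}_\lambda$ is nonzero only on $d+1$ arguments and is paired with the Jouanolou residue.
\item The answer is needed in polynomial degree $d+1$, and those classes are not curvature Chern--Weil classes. Degree-$(d+1)$ polynomials die under the relative Chern--Weil map for $(\lie{w}_d,\lie{gl}_d)$ by Bott vanishing. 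The classes you need are the secondary ones in the odd group $H^{2d+1}_{Lie}(\lie{w}_d)$ of theorem \ref{thm:local}, and $\mathsf{cw}_{d,r}$ produces them through $\rho$ and $\lie{c}$. So ``matching with $\mathsf{cw}_{d,r}$'' does not follow from FFS; it is exactly what has to be proved.
\item $\hat{A}$ and $\op{Td}$ are not ``equivalent''. One has $\op{Td}=e^{\op{ch}_1/2}\hat{A}$, and the factor $e^{\op{ch}_1/2}$ has to be produced by the term $-\tfrac12\op{div}$ in the symbol map $\sigma$. The paper singles this out as the main difference from FFS.
\end{itemize}

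\textbf{Why the fallback does not close the gap.} Each special case requires the same computation you are trying to avoid. Tensor modules only see the specialisations $\lie{t}_j\mapsto\rho_V^*\lie{t}_j$; for one-dimensional $V$ these are just powers of $\op{tr}$. So they cannot detect the independent $\lie{t}$-dependence in theorem \ref{thm:grrD}. The left-hand side is not known to be multiplicative in $V$. The $d$-dimensional residue also does not factor over the blocks of $\Lambda$.

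\textbf{What is missing} is the direct cochain-level computation the paper carries out.
\begin{itemize}
\item Compose with $\sigma$ and expand $e^{\Pi+D}$ by Wick's theorem on inputs from $\lie{witt}_d\ltimes\lie{gl}_r(\cJ_d)$.
\item A vector field $T=\sum_i f^i\partial_i$ goes to $\sum_i f^i(q)\,p^{i}-\tfrac12\op{div}(T)$, which is at most linear in $p$. Because of this, only one-loop (wheel) graphs contribute.
\item Around a wheel each vertex receives one $\partial_q$ and one $\partial_p$, so it contributes a Jacobian entry. The wheel therefore produces $\op{Tr}(JT_{i_0},\ldots,JT_{i_m})$, a component of $\lie{c}$.
\item The wheel weights are integrals of $\SP$ over the cyclic configuration spaces. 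They give the Bernoulli coefficients of $\log\op{Td}$, with the $y/2$ term coming from $-\tfrac12\op{div}$. Disjoint wheels combine through the $*$-product, which exponentiates.
\item The $\lie{gl}_r(\cJ_d)$ inputs have order zero, so contracting into them would create a tree component. They stay uncontracted and enter only through $\op{tr}_{\lie{gl}_r}$ of their products, that is through the $\gamma_{\lie{t}_j}$. This gives the factor $\op{Ch}$.
\end{itemize}
This identifies the restricted class with the corresponding combination of the classes $\rho(\lie{c}_1^{*i_1}*\cdots*\gamma_{\lie{t}_1}^{*k_1}*\cdots)$, i.e.\ with $\op{Td}\cdot\op{Ch}|_{2d+2}$. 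There is no need to pair against the cycles $\til{\mathbb{v}}_{\lambda,\kappa}$.
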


Theorem \ref{thm:grr} can be reduced to the theorem above. 
Indeed, let $\cV= \cJ_d \otimes V$ be a tensor module. 
The $\lie{witt}_d$-action on $\cV$ factors as a composition
\[
\lie{witt}_d \to \lie{witt}_d \ltimes \lie{gl}_r(\cJ_d)\simeq \lie{witt}_d \ltimes (\operatorname{End}(V)\otimes \cJ_d)\to \op{End}(\cV).
\]
Thus, Theorem \ref{thm:grr} can be deduced from the above theorem.

\subsection{Algebraic residue trace}
As we have already mentioned, this class \eqref{eq:trD} is inspired by the formal algebraic trace map of \cite{FFS}.
We introduce the dg Weyl algebra $\cW_d$ associated to the punctured formal disk, see section \cite[section 1.5]{GWvir2d} for definitions and
notations.
There is a dg algebra isomorphism from $\mathcal{D}_{d}$ to $\mathcal{W}_{d}$ given by the symbol map
\[
\sigma\left(f(z^{\1},...,z^{\dd})F(\partial_{z^{\1}},...,\partial_{z^{\dd}})\right) \define e^{-\frac{1}{2}\sum^d\limits_{\s=1}\partial_{p^{\s}}\partial_{q^{\s}}}(f(q^{\1},...,q^{\dd})F(p^{\1},...,p^{\dd})).
\]
In particular
\[
\sigma\left(T\right)=T-\frac{1}{2}\mathrm{div}(T)
\]
where $T\in \lie{witt}_d \subset \cD_d$ and $\op{div}(\sum_i f^i \del_i) = \sum_i \del_i f^i \in \cJ_d$.

The residue trace is most fundamentally defined at the level of cyclic cohomology.
It is a \textit{non-commutative} version of the cyclic class $\rho \in \HH_\lambda^1(\cJ_d)$ of \cite{FHK}. We follow the presentation in \cite[3]{GLX}, which gives a convenient setting to generalize \cite{FFS} to our case.

\begin{definition}
    Define the residue trace cochain
    \[
      \op{ResTr}^{S^1}_\lambda \in  \overline{C}_{\lambda}^{\bu}(\lie{gl}_r(\mathcal{W}_d))^1
  \]
   of total degree $+1$ by the formula
\begin{multline}
  \mathrm{ResTr}_\lambda^{S^1}\left(O_0\otimes \cdots\otimes O_k\right) =
  \mathrm{Res}_{q} \left({\int_{S^1_{\mathrm{cyc}}[k+1]}\operatorname{tr}_{\lie{gl}_r}\circ\mathbf{Mult}\left(e^{\Pi+D}(O_0\cdot d\theta_0\otimes O_1\otimes\cdots\otimes O_k)\right)}|_{p=0}\right),
\end{multline}
%\begin{multline}
     %mathrm{Tr}^{S^1}\left(O_0\otimes \cdots\otimes O_k\right) \\
%=\mathrm{Res}\left({\int_{\triangle_k}e^{\sum^d\limits_{\s=1}\sum\limits_{i<j}\psi(\theta_i-\theta_j)((\partial_{q^{\s}})_i\otimes(\partial_{p^{\s}})_j-(\partial_{p^{\s}})_i\otimes(\partial_{q^{\s}})_j)}(O_0\otimes D(O_1)\otimes\cdots\otimes D(O_k))}|_{p=0}\right),
%\end{multline}
    where:
    \begin{itemize}
    \item[(i)] the operators $\Pi$ and $D$ are as follows:
      \[
\Pi\define \frac{1}{2}\sum_{i<j}\pi^*_{ij}\SP\cdot \Pi_{ij},\quad \Pi_{ij}:=\sum^d_{\s=1}(\partial_{q^{\s}})_i\otimes(\partial_{p^{\s}})_j-(\partial_{p^{\s}})_i\otimes(\partial_{q^{\s}})_j,
\]
\[
D(O_0\cdot d\theta_0\otimes O_1\otimes\cdots\otimes O_k)=\sum^{k}_{i=1}O_0\cdot d\theta_0\otimes \cdots\otimes D_i(O_i)\otimes\cdots\otimes O_k,
\]
    \[
    D_i(O_i)\define \sum^d_{\s=1}{\partial_{q^{\s}}}(O_i)dq^{\s}\cdot d\theta_i+\sum^d_{\s=1}{\partial_{p^{\s}}}(O_i)dp^{\s}\cdot d\theta_i.
  \]
      \item[(ii)] Let $\mathrm{Cyc}_{S^1}[k+1]$ be the configuration space of $k+1$ anti-clockwise cyclic ordered points on the circle $S^1$. We have a natural identification
      \[
      \mathrm{Cyc}_{S^1}[k+1]\simeq S^1\times \mathring{\Delta}_{k}
      \]
      \[
      (\theta_0,\dots,\theta_k)\rightarrow \{p_0\}\times (\theta_{0,1},\theta_{1,2},\dots,\theta_{k,0}).
      \]
      Here we denote $\Delta_k=\{(\theta_{0,1},\theta_{1,2},\dots,\theta_{k-1,k},\theta_{k,0})\in [0,1]^{k+1}| \theta_{0,1}+\theta_{1,2}+\dots+\theta_{k,0}=1\}$ viewed as a manifold with corners and $\mathring{\Delta}_{k}$ its interior. This allows us to compactify $\mathrm{Cyc}_{S^1}[k+1]$ by $S^1\times\Delta_{k}$, which will be denoted by $S^1_{\mathrm{cyc}}[k+1]$.  
      %$\psi:\R\rightarrow[0,1]$ is the function defined by $\psi(u)=2u+1$ if $-1\leq u <0$ and $\psi(u+1)=\psi(u)$. 
      \item[(iii)] For $0\leq i<j\leq k$, we have the fogetful map
      \[
\pi_{ij}:      S^1_{\mathrm{cyc}}[k+1]\rightarrow       S^1_{\mathrm{cyc}}[2]
      \]
      \[
      (\theta_0,\dots,\theta_k)\rightarrow (\theta_i,\theta_j).
      \]
    Recall  that $S^1_{\mathrm{cyc}}[2]\simeq S^1\times \Delta_1=S^1\times [0,1]$. The function $\SP$ on $S^1_{\mathrm{cyc}}[2]$ is define to be $u-\frac{1}{2}$ where $u\in \Delta_1=[0,1]$.
    
      \item[(iv)] The notation $\mathbf{Mult}\left(-\right)|_{p=0}$ means that we first multiply together the tensor components and then set $p^{\s}=dp^{\s}=0$.
      \item[(v)] $\op{Res}_q$ is the same Jouanolou residue introduced in section \cite[section 2.1]{GWvir2d} except we use the
        variable $q$ instead of $z$.
      \end{itemize}
\end{definition}
From the definition, $\mathrm{ResTr}^{S^1}$ is zero except $k=d$.

\subsection{One-loop result}
Recall the logarithmic Todd polynomial is
\[
\log(\mathrm{Todd}(y))=\frac{y}{2}-\sum_{m\geq 1}\frac{B_{2m}}{2m}y^m
\]
where $B_{2m}$ is $2m$th Bernoulli number.
We proceed to the proof of theorem \ref{thm:grrD}.
\begin{theorem*}[\ref{thm:grrD}]
  One has 
  \begin{equation}\label{}
    \op{ResTr}_{\cD}^{S^1}|_{\lie{witt}_d} = \op{Td}\op{Ch}|_{2d+2}
  \end{equation}
\end{theorem*}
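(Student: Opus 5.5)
The strategy is to reduce to a finite-dimensional check and then compute with the Feigin--Felder--Shoikhet style trace. By theorems \ref{thm:localglr} and \ref{thm:cwdr}, $\mathsf{cw}_{d,r}$ is an isomorphism onto $\HH^2_{Lie}(\lie{witt}_d\ltimes\lie{gl}_r(\cJ_d))$. Both sides of the claimed identity are therefore determined by their pairings with the cycles $\til{\mathbb{v}}_{\lambda,\kappa}(d)$ of definition \ref{dfn:partitionvv}. Moreover, by theorem \ref{1dImageMatrix}, the image under $i_*$ of every such cycle in $\HH_2^{Lie}(\lie{gl}_r(\cD_d))$ is a multiple of the single class $[P(d)\mathbf I_r\otimes z^\1\mathbf I_r\wedge\cdots\wedge z^\dd\mathbf I_r]$.

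So the plan has two parts. First, show that $\op{ResTr}^{S^1}_\cD$ evaluates to $1$ on this distinguished class. Second, show that $i^*\op{ResTr}^{S^1}_\cD$ is a cocycle which, as a cochain, is given by the Chern--Weil formula \eqref{eq:chernwittMixed} applied to the polynomial $\op{Td}\op{Ch}|_{2d+2}$.

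The first part is a direct computation. The elements $z^\s\mathbf I_r$ have symbols $q^\s$, which are linear in $q$. Hence in $e^{\Pi+D}$ only the $D$-terms contribute: the $D_i$ turn each $q^\s$ into $dq^\s\,d\theta_i$. The $\Pi$-terms drop out because after setting $p=0$ there are no $p$-derivatives left to pair. Integrating over $S^1_{\mathrm{cyc}}[d+1]$ then produces $\op{Res}_q(P\,dq^\1\cdots dq^\dd)=1$, times $r$ from the trace; I would fix the normalization by comparison with $\tau$. I would also use the cyclic-cocycle property of $\op{ResTr}_\lambda^{S^1}$ to conclude that it defines a Lie class via the Loday--Quillen--Tsygan map. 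This is the analogue of the FFS cocycle argument and should follow from Stokes' theorem on $S^1_{\mathrm{cyc}}[k+1]$, as in \cite{GLX}.

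The second part is the heart of the proof. Restrict to $T_j\in\lie{witt}_d$ and $M_j\otimes f_j\in\lie{gl}_r(\cJ_d)$, and write $\sigma(T)=T-\tfrac12\op{div}T$. Expand $e^{\Pi}$ in graphs with $d+1$ vertices. Since each vector field symbol is linear in $p$, and we set $p=0$ at the end, each vertex can emit at most one $\partial_p$. The surviving graphs are therefore disjoint unions of directed cycles (wheels) together with trees ending in $D$-legs. A wheel of length $m$ contributes $\op{Tr}(JT_{i_1}\cdots JT_{i_m})$ multiplied by the configuration integral of $\prod\SP$ around the cycle. The $D$-legs supply the $\partial f$ factors that are required by $\rho$. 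Exponentiating the connected wheel weights gives $\exp\big(\sum_m w_m\,\op{ch}_m\big)$, and the whole contribution organizes as $*$-products of $\lie c_m$ and $\gamma_\theta$. The $\lie{gl}_r$-parts, which have no $p$-dependence, give $\op{Ch}$.

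The main obstacle is computing the wheel weights $w_m=\int\prod_{\text{cycle}}\SP$ and matching them with $\log\op{Td}$, including the correction coming from the divergence term. My first attempt is the standard Fourier computation on the circle. The periodic sawtooth $u-\tfrac12$ has Fourier coefficients $\propto 1/(2\pi i n)$. A wheel of length $m$ then gives $\sum_{n\neq0}(2\pi i n)^{-m}$, which is $-B_m/m!$ up to sign, and this is exactly $-B_{2m}/(2m)$ after converting the power-sum normalization $\op{ch}_m$. The $\tfrac12\op{div}$ shift accounts for the linear term $y/2$. Once the weights match, equality holds at the cochain level on the leading Jouanolou-degree terms. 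By injectivity, theorem \ref{thm:cwdr}, this yields the statement.
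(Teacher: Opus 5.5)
Your proposal follows the paper's own route: a Wick-type graph expansion of $\op{ResTr}^{S^1}$ in which only one-loop wheels contribute, the sawtooth wheel weights together with the $-\tfrac12\op{div}$ twist from the symbol map assembling into $\op{Td}$, and the $p$-independent $\lie{gl}_r(\cJ_d)$ entries entering only through $\op{tr}_{\lie{gl}_r}$ to give $\op{Ch}$. Your Part 1 normalization and the appeal to Theorem \ref{1dImageMatrix} are what the paper uses afterwards to deduce Theorem \ref{thm:grr}, and are not needed for this statement. You list trees among the ``surviving'' graphs, so you need one more step: show that every graph with a tree part vanishes, including any edge that lands on a $\lie{gl}_r(\cJ_d)$ vertex. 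The reason is that a leaf vertex integrates $\SP$ over the circle to zero once the antisymmetrization assembles the cyclic chambers into the full torus, and this is exactly what makes the answer match the wheel-only Chern--Weil cocycle.
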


\begin{proof}
 The proof is a straightforward refinement of the argument in \cite{GWWchiral}, which employs a setup similar to that of \cite{FFS} and \cite{GLX}. The main distinction is that we introduce a twist by the symbol map (a secondary difference is that we work with the residue rather than formal geometry). The central idea is to rewrite the exponential in a Wick-type form, which in turn produces a Feynman diagram expansion for the residue trace. Furthermore, because we restrict attention to vector fields instead of general higher-order differential operators, the resulting expansion involves only one-loop graphs.

The term involving $\lie{gl}_r(\cJ_d)\subset\lie{witt}_d\ltimes \lie{gl}_r(\cJ_d)$ cannot be connected by any Wick contraction, since only one-loop diagrams contribute, and contracting the $\lie{gl}_r(\cJ_d)$ part would produce a tree component. Consequently, taking the trace $\operatorname{tr}_{\lie{gl}_r}$ yields the factor $\op{Ch}$. This completes the proof.

\end{proof}

\newpage 

\printbibliography
%\bibliographystyle{amsplain}
%\bibliography{RRChiral}
\end{document}